\providecommand{\tabularnewline}{\\}
\theoremstyle{plain}
\newtheorem{thm}{\protect\theoremname}
  \theoremstyle{remark}
  \newtheorem{rem}{\protect\remarkname}
  \theoremstyle{plain}
  \newtheorem{prop}{\protect\propositionname}
\def\th@remark{%
  \thm@headfont{\bfseries}%
  \normalfont 
  \thm@preskip\topsep \divide\thm@preskip\tw@
  \thm@postskip\thm@preskip
}
\let\mySection\section\renewcommand{\section}{\suppressfloats[t]\mySection}
\pgfplotsset{compat=newest}
\newlength{\abc}
\renewcommand{\ref}[1]{\mbox{\autoref{#1}}}
  \providecommand{\propositionname}{Proposition}
  \providecommand{\remarkname}{Remark}
\providecommand{\theoremname}{Theorem}
\begin{document}

\title{The analysis of batch sojourn-times in polling systems}

\author{J.P. van der Gaast\\
Erasmus Universiteit Rotterdam\\
jgaast@rsm.nl \and M.B.M. de Koster\\
Erasmus Universiteit Rotterdam\\
rkoster@rsm.nl \and I.J.B.F. Adan\\
Technische Universiteit Eindhoven\\
i.j.b.f.adan@tue.nl}
\maketitle
\begin{abstract}
We consider a cyclic polling system with general service times, general
switch-over times, and simultaneous batch arrivals. This means that
at an arrival epoch, a batch of customers may arrive simultaneously
at the different queues of the system. For the locally-gated, globally-gated,
and exhaustive service disciplines, we study the batch sojourn-time,
which is defined as the time from an arrival epoch until service completion
of the last customer in the batch. We obtain for the different service
disciplines exact expressions for the Laplace-Stieltjes transform
of the steady-state batch sojourn-time distribution, which can be
used to determine the moments of the batch sojourn-time, and in particular,
its mean. However, we also provide an alternative, more efficient
way to determine the mean batch sojourn-time, using Mean Value Analysis.
Finally, we compare the batch sojourn-times for the different service
disciplines in several numerical examples. Our results show that the
best performing service discipline, in terms of minimizing the batch
sojourn-time, depends on system characteristics.
\end{abstract}

\section{Introduction}

Polling models are multi-queue systems in which a single server cyclically
visits queues in order to serve waiting customers, typically incurring
a switch-over time when moving to the next queue. Polling systems
have been extensively used for decades to model a wide variety of
applications in areas such as computer and communication systems,
production systems, and traffic and transportation systems \cite{Takagi2000,Boon2011}.
In the majority of the literature on polling systems, it is assumed
that in each queue new customers arrive via independent Poisson processes.
However, in many applications these arrival processes are not necessarily
independent; customers arrive in batches and batches of customers
may arrive at different queues simultaneously \cite{Mei2002}. It
is important to consider the correlation structure in the arrival
processes for these applications, because neglecting it may lead to
strongly erroneous performance predictions, and, consequently, to
improper decisions about system performance. In this paper, we study
the \emph{batch sojourn-time} in polling systems with simultaneous
arrivals, that is, the time until all the customers in a single batch
are served after an arrival epoch. 

Batch sojourn-times are of great interest in many applications of
polling systems with simultaneous arrivals. Below we describe some
examples in manufacturing, warehousing, and communication. The first
example is the \emph{stochastic economic lot scheduling problem},
which is used to study the production of multiple products on a single
machine with limited capacity, under uncertain demands, production
times, and setup times \cite{Federgruen1999,Winands2011}. In case
of a cyclic policy, there is fixed production sequence such that the
order in which products are manufactured is always known to the manufacturer.
Whenever a customer has placed an order for one or multiple products,
the machine starts production. After the requested number of products
has been produced, including possible demand for the same product
of orders that just came in, the machine starts to process the next
product in the sequence. In this way, the machine polls the buffers
of the different product categories to check whether production is
required. In this example, the server represents the machine, a customer
represents a unit of demand for a given product, and a batch arrival
corresponds to the order itself. The batch sojourn-time is defined
as the total time required for manufacturing an entire order.

\begin{figure}[tph]
\noindent \begin{centering}
\begin{tikzpicture}

\tikzset{
  on each segment/.style={
    decorate,
    decoration={
      show path construction,
      moveto code={},
      lineto code={
        \path [#1]
        (\tikzinputsegmentfirst) -- (\tikzinputsegmentlast);
      },
      curveto code={
        \path [#1] (\tikzinputsegmentfirst)
        .. controls
        (\tikzinputsegmentsupporta) and (\tikzinputsegmentsupportb)
        ..
        (\tikzinputsegmentlast);
      },
      closepath code={
        \path [#1]
        (\tikzinputsegmentfirst) -- (\tikzinputsegmentlast);
      },
    },
  },
  mid arrow/.style={postaction={decorate,decoration={
        markings,
        mark=at position .5 with {\arrow[#1]{>}}
      }}},
}

\tikzstyle{bordered} = [preaction={fill, white,drop shadow}, draw,outer sep=0,inner sep=1,minimum height=39.8,minimum width=20,
pattern=north west lines, pattern color = black!20]

\node [bordered] at (-6.5,2.4) {\small $Q_{5}$};
\node [bordered] at (-6.5,1) {\small $Q_{3}$};
\node [bordered] at (-6.5,-0.4) {\small $Q_1$};

\node [bordered] at (-5.1,2.4) {\small $Q_{6}$};
\node [bordered] at (-5.1,1) {\small $Q_{4}$};
\node [bordered] at (-5.1,-0.4) {\small $Q_{2}$};

\node [bordered] at (-3,2.4) {\small $Q_{7}$};
\node [bordered] at (-4.4,2.4) {\small $Q_{8}$};
\node [bordered] at (-3,1) {\small $Q_{9}$};
\node [bordered] at (-4.4,1) {\small $Q_{10}$};
\node [bordered] at (-3,-0.4) {\small $Q_{11}$};
\node [bordered] at (-4.4,-0.4) {\small $Q_{12}$};
\node [bordered] at (-0.9,2.4) {\small $Q_{18}$};

\node [bordered] at (-2.3,2.4) {\small $Q_{17}$};
\node [bordered] at (-2.3,1) {\small $Q_{15}$};
\node [bordered] at (-0.9,1) {\small $Q_{16}$};
\node [bordered] at (-2.3,-0.4) {\small $Q_{13}$};
\node [bordered] at (-0.9,-0.4) {\small $Q_{14}$};

\node [bordered] at (1.2,2.4) {\small $Q_{19}$};
\node [bordered] at (-0.2,2.4) {\small $Q_{20}$};
\node [bordered] at (1.2,1) {\small $Q_{21}$};
\node [bordered] at (-0.2,1) {\small $Q_{22}$};
\node [bordered] at (-0.2,-0.4) {\small  $Q_{23}$};
\node [bordered] at (1.2,-0.4) {\small  $Q_{24}$};

\path[very thick,draw=blue,postaction={on each segment={mid arrow=blue}}] (-5.8,-1.9) -- (-5.8,3.3) -- (-3.7,3.3)  -- (-3.7,-1.5) -- (-1.6,-1.5)  -- (-1.6,3.3)  -- (0.5,3.3)  -- (0.5,3.3) -- (0.5,-1.9) -- (-5.8,-1.9) -- cycle;

\node [fill=white,draw=black,outer sep=3,inner sep=3,minimum height=10,minimum width=10, drop shadow] at (-4.7,-1.9) {Depot};

\begin{scope}[scale=0.25, shift={(-8.7,-7.5)}]
\node[] (nw) at (0,0.5) {};
\node[] (ne) at (2.5,0.5) {};
\node[] (se) at (2.5,-1) {};
\node[] (sw) at (0,-1) {};

\path[draw,fill=red,drop shadow,shade, text centered, top color=red!80, bottom color=red!60] (nw.center) -- (ne.center) -- (se.center) -- (sw.center) to[bend left] (nw.center) -- cycle;
\draw[draw,fill=gray, top color=gray!80, bottom color=gray!20]  (1.75,0.25) rectangle (2.25,-0.75);
\draw[fill=gray!40!black!60] (0,0.45) -- (0.5,0.45) -- (0.5,-0.95) -- (0,-0.95) to[bend left=26] (0,0.45);
\draw[fill=brown!150] (1,0.1) -- (1.5,0.1) to[bend left]  (1.5,-0.6) -- (1,-0.6) -- (1,0.1) -- cycle;
\draw[fill=brown] (1.05,0.05) -- (1.5,0.05) to[bend left=20]  (1.5,-0.55) -- (1.05,-0.55) -- (1.05,0.05) -- cycle;
\draw[fill=gray!40]   (0.6,-0.25) ellipse (0.3 and 0.3);
\draw (0.6,-0.55) -- (0.6,0.05);
\draw (0.3,-0.25) -- (0.9,-0.25);

\draw[fill=red,drop shadow, top color=red!80, bottom color=red!60]  (2.5,-0.2) rectangle (2.8,-0.3);

\draw[fill=red,drop shadow, top color=red!80, bottom color=red!60] (2.8,0.5) -- (2.8,-1) -- (5.3,-1) -- (5.3,0.5) -- cycle ;
\draw[fill=blue!40]  (3,0.5) rectangle (5.1,-0.8);
\draw[fill=blue!30]  (3.1,0.4) rectangle (5,-0.7);
\draw[draw=none,fill=gray!75,opacity=0.8] (3.1,0.4)  -- (5,0.4) -- (5,-0.7) -- (4.7,-0.7) -- (4.7,0.1) -- (3.1,0.1) -- (3.1,0.4);
\draw  (3.1,0.4) rectangle (5,-0.7);

\draw[fill=red,drop shadow, top color=red!80, bottom color=red!60]  (5.3,-0.2) rectangle (5.6,-0.3);

\draw[fill=red, drop shadow,  top color=red!80, bottom color=red!60] (5.6,0.5) -- (5.6,-1) -- (8.1,-1) -- (8.1,0.5) -- cycle;
\draw[fill=blue!40]  (5.8,0.5) rectangle (7.9,-0.8);
\draw[fill=blue!30,general shadow={fill=red}]  (5.9,0.4) rectangle (7.8,-0.7);

\draw[draw=none,fill=gray!75,opacity=0.8] (5.9,0.4)  -- (7.8,0.4) -- (7.8,-0.7) -- (7.5,-0.7) -- (7.5,0.1) -- (5.9,0.1) -- (5.9,0.4);
\draw[]  (5.9,0.4) rectangle (7.8,-0.7);
\end{scope}
\node (A) at (-1.15,-2) {};
\node[fill=white,draw=black,outer sep=3,inner sep=3,minimum height=10,minimum width=10, drop shadow] (OP) at (1.75,-2.5) {Order picker};
\draw[<-] (OP) -| (A);

\end{tikzpicture} 
\par\end{centering}
\caption{A milkrun order picking system with one order picker and 24 different
storage locations.\label{fig:Dynamic-order-picking}}
\end{figure}

The second example comes from the area of warehousing. In a \emph{milkrun
order picking system}, an order picker is constantly moving through
the warehouse (e.g. with a tugger train) and receives, using modern
order-picking aids like pick-by-voice, pick-by-light, or hand-held
terminals, new pick instructions that allow new orders to be included
in the current pick route \cite{Gong2011}. In \ref{fig:Dynamic-order-picking}
a milkrun order picking system is shown, where different products
are stored at locations $Q_{1},\dots,Q_{N}$. Assume that a single
order picker is constantly traveling through the aisles with the S-shape
routing policy \cite{Roodbergen2001} and picks all outstanding orders
in one pick route to a pick cart, which has sufficient capacity. An
order consists of multiple products that have to be picked at multiple
locations in the warehouse. Demand for products that are located upstream
of the order picker will be picked in the next picking cycle. When
the order picker reaches the depot, the picked products are disposed
and sorted per customer order (using a pick-and-sort system) and a
new picking cycle will start. The server is represented by the order
picker, a new customer order by a batch arrival, a product within
an order by a customer in the polling system. The batch sojourn-time
is the time required to pick a customer order.

The last example from the area of computer-communication systems is
an \emph{I/O subsystem} of a web server. Web servers are required
to perform millions of transaction requests per day at an acceptable
Quality of Service (QoS) level in terms of client response time and
server throughput \cite{Mei2001a}. When a request for a web page
from the server is made, several file-retrieval requests are made
simultaneously (e.g., text, images, multimedia, etc). In many implementations
these incoming file-retrieval requests are placed in separate I/O
buffers. The I/O controller continuously polls, using a scheduling
mechanism, the different buffers to check for pending file-retrieval
requests to be executed. The web page will be fully loaded when all
its file-retrieval requests are executed. In this application, the
server represents the I/O controller, a customer represents an individual
file-retrieval request, a batch of customers that arrive simultaneously
corresponds to each web page request, and the batch sojourn-time is
the time required to fully load a web page. 

In the literature, polling systems with simultaneous arrivals have
not been studied intensively. \cite{Shiozawa1990} study a two-queue
polling system where customers arrive at each station according to
an independent Poisson process and, in addition, customers can arrive
in pairs at the system and each join a different queue. The authors
derive the Laplace-Stieltjes transform of the waiting time distribution
of an individual customer and the response time distribution of a
pair of customers that arrive simultaneously. \cite{Levy1991} study
polling models with simultaneous batch arrivals. For models with gated
or exhaustive service, they derive a set of linear equations for the
expected waiting time at each of the queues. They also provide a pseudo-conservation
law for the system, i.e., an exact expression for a specific weighted
sum of the expected waiting times at the different queues. \cite{Chiarawongse1991}
also derive pseudo-conservation laws, but in their model all customers
in a batch join the same queue. Finally, \cite{Mei2001} considers
an asymmetric cyclic polling model with mixtures of gated and exhaustive
service and general service time and switch-over time distributions
and studies the heavy traffic behavior. The results were further generalized
in \cite{Mei2002}.

The objective of this paper is to analyze the batch sojourn-time in
a cyclic polling system with simultaneous batch arrivals. The contribution
of this paper is that we obtain exact expressions for the Laplace-Stieltjes
transform of the steady-state batch sojourn-time distribution for
the locally-gated, globally-gated, and exhaustive service disciplines,
which can be used to determine the moments of the batch sojourn-time,
and in particular, its mean. However, we provide an alternative, more
efficient way to determine the mean batch sojourn-time by extending
the Mean Value Analysis approach of \cite{Winands2006a} for the cases
of exhaustive and locally-gated service disciplines. We compare the
batch sojourn-times for the different service disciplines in several
numerical examples and show that the best performing service discipline,
minimizing the batch sojourn-time, depends on system characteristics.
From the results we conclude that there is no unique best service
discipline that minimizes the expected batch sojourn-time. As such,
our results provide a starting point for a framework to minimize batch
sojourn-times for a given polling system.

The organization of this paper is as follows. In \ref{sec:Model-description}
a detailed description of the model and the corresponding notation
used in this paper is given. \ref{sec:Exhaustive-service} analyzes
the batch sojourn-time for exhaustive service, \ref{sec:Locally-gated-service}
does this for locally-gated service, and in \ref{sec:Globally-gated-service}
for globally-gated service. We extensively analyze the results of
our model in \ref{sec:Numerical-results} via computational experiments
for a range of parameters. Finally, in \ref{sec:Conclusion-and-futher-research}
we conclude and suggest some further research topics.

\section{Model description\label{sec:Model-description}}

Consider a polling system consisting of $N\geq2$ infinite buffer
queues $Q_{1},\dots,Q_{N}$ served by a single server that visits
the queues in a fixed cyclic order. For the ease of presentation,
all references to queue indices greater than $N$ or less than $1$
are implicitly assumed to be modulo $N$, e.g., $Q_{N+1}$ is understood
as $Q_{1}$. Assume that a new batch of customers arrives according
to a Poisson process with rate~$\lambda$. Each batch of customers
is of size $\boldsymbol{K}=\left(K_{1},\dots,K_{N}\right)$, where
$K_{i}$ represents the number of customers entering the system at
$Q_{i}$, $i=1,\dots,N$. The random vector $\boldsymbol{K}$ is assumed
to be independent of past and future arriving epochs and at least
one element of vector $\boldsymbol{K}$ is larger than 0 and the other
elements are larger than or equal to $0$, i.e. each batch contains
at least one customer. The support with all possible realizations
of $\boldsymbol{K}$ is denoted by $\mathcal{K}$ and let $\boldsymbol{k}=\left(k_{1},\dots,k_{N}\right)$
be a realization of $\boldsymbol{K}$. The joint probability distribution
of $\boldsymbol{K}$, $\pi\left(\boldsymbol{k}\right)=\mathbb{P}\left(K_{1}=k_{1},\dots,K_{N}=k_{N}\right)$
is arbitrary and its corresponding probability generating function
(PGF) is given by $\widetilde{K}\left(\boldsymbol{z}\right)=E\left(z_{1}^{K_{1}}z_{2}^{K_{2}}\dots z_{N}^{K_{N}}\right)$.
The PGF of the marginal batch size distribution at $Q_{i}$ is denoted
by $\widetilde{K}_{i}\left(z\right)=\widetilde{K}\left(1,\dots,1,z,1,\dots,1\right)$,
$\left|z\right|\leq1$, where the $z$ occurs at the $i$-th entry.
The arrival rate of customers to $Q_{i}$ is $\lambda_{i}=\lambda E\left(K_{i}\right)$,
and let $E\left(K_{ij}\right)=E\left(K_{i}K_{j}\right)$ for $i\neq j$
and $E\left(K_{ii}\right)=E\left(K_{i}^{2}\right)-E\left(K_{i}\right)$.
The total arrival rate of customers arriving in the system is given
by $\Lambda=\sum_{i=1}^{N}\lambda_{i}$.

The service time of a customer in $Q_{i}$ is a generally distributed
random variable $B_{i}$ with Laplace-Stieltjes transform (LST) $\widetilde{B}_{i}\left(.\right)$,
and with first and second moment $E\left(B_{i}\right)$ and $E(B_{i}^{2})$,
respectively. The workload at queue $Q_{i}$, $i=1,\dots,N$ is defined
by $\rho_{i}=\lambda_{i}E\left(B_{i}\right)$; the overall system
load by $\rho=\sum_{i=1}^{N}\rho_{i}$. In order for the system to
be stable, a necessary and sufficient condition is that $\rho<1$
\cite{Takagi1986}. In the remainder of this paper, it is assumed
that the condition for stability holds. When the server switches from
$Q_{i}$ to $Q_{i+1}$, it incurs a generally distributed switch-over
time $S_{i}$ with LST $\tilde{S_{i}}\left(.\right)$, and first and
second moment $E\left(S_{i}\right)$ and $E(S_{i}^{2})$. Let $E\left(S\right)=\sum_{i=1}^{N}E\left(S_{i}\right)$
be the total switch-over time in a cycle and $E(S^{2})=\sum_{i=1}^{N}E(S_{i}^{2})+\sum_{i\neq j}E\left(S_{i}\right)E\left(S_{j}\right)$
its second moment.

\begin{figure}[tph]
\noindent \begin{centering}
\begin{tikzpicture}[y=0.80pt, x=0.80pt, yscale=-0.900000, xscale=0.900000, inner sep=0pt, outer sep=0pt]

\definecolor{c00ff00}{RGB}{0,255,0}
\definecolor{cff0000}{RGB}{255,0,0}

  \begin{scope}[cm={{1.25,0.0,0.0,-1.25,(-461.9806,1448.6624)}}]
      \path[draw=black,line join=round,line cap=stealth,miter limit=10.00,line
        width=0.320pt] (468.0000,704.0000) -- (56.0000,704.0000);
      \path[draw=black,line join=round,line cap=stealth,miter limit=10.00,line
        width=0.320pt] (64.0000,712.0000) -- (64.0000,696.0000);
      \path[draw=black,line join=round,line cap=stealth,miter limit=10.00,line
        width=0.320pt] (112.0000,712.0000) -- (112.0000,696.0000);
      \path[draw=black,line join=round,line cap=stealth,miter limit=10.00,line
        width=0.320pt] (144.0000,712.0000) -- (144.0000,696.0000);
      \path[draw=black,line join=round,line cap=stealth,miter limit=10.00,line
        width=0.320pt] (192.0000,712.0000) -- (192.0000,696.0000);
      \path[draw=black,line join=round,line cap=stealth,miter limit=10.00,line
        width=0.320pt] (216.0000,696.0000) -- (216.0000,712.0000);
      \path[draw=black,line join=round,line cap=stealth,miter limit=10.00,line
        width=0.320pt] (460.0000,712.0000) -- (460.0000,696.0000);
      \path[draw=black,line join=round,line cap=stealth,miter limit=10.00,line
        width=0.320pt] (404.0000,696.0000) -- (404.0000,712.0000);
      \path[draw=black,line join=round,line cap=stealth,miter limit=10.00,line
        width=0.320pt] (364.0000,696.0000) -- (364.0000,712.0000);
      \path[draw=black,line join=round,line cap=stealth,miter limit=10.00,line
        width=0.320pt] (332.0000,696.0000) -- (332.0000,712.0000);
      \path[draw=black,line join=round,line cap=stealth,miter limit=10.00,line
        width=0.320pt] (64.0000,724.0000) -- (404.0000,724.0000);
          \path[draw=black,fill=black,line join=round,line cap=stealth,miter limit=10.00,even
            odd rule,line width=0.320pt] (404.0000,724.0000) -- (397.0000,726.3310) --
            (398.4000,724.0000) -- (397.0000,721.6690) -- (404.0000,724.0000) -- cycle;
          \path[draw=black,fill=black,line join=round,line cap=stealth,miter limit=10.00,even
            odd rule,line width=0.320pt] (64.0000,724.0000) -- (71.0000,721.6690) --
            (69.6000,724.0000) -- (71.0000,726.3310) -- (64.0000,724.0000) -- cycle;
    \begin{scope}[shift={(83.436,686.51)}]
      \begin{scope}[shift={(0,-783.698)}]
        \path[cm={{1.0,0.0,0.0,-1.0,(0.0,785.192)}},fill=black,nonzero rule]
          (0.0000,0.0000) node[above right] (text31372) {\scriptsize $V_i$};
      \end{scope}
    \end{scope}
    \begin{scope}[shift={(123.287,686.51)}]
      \begin{scope}[shift={(0,-783.698)}]
        \path[cm={{1.0,0.0,0.0,-1.0,(0.0,785.192)}},fill=black,nonzero rule]
          (0.0000,0.0000) node[above right] (text31384) {\scriptsize $S_i$};
      \end{scope}
    \end{scope}
    \begin{scope}[shift={(158.392,685.68)}]
      \begin{scope}[shift={(0,-782.868)}]
        \path[cm={{1.0,0.0,0.0,-1.0,(0.0,785.193)}},fill=black,nonzero rule]
          (0.0000,0.0000) node[above right] (text31396) {\scriptsize $V_{i+1}$};
      \end{scope}
    \end{scope}
    \begin{scope}[shift={(194.244,685.68)}]
      \begin{scope}[shift={(0,-782.868)}]
        \path[cm={{1.0,0.0,0.0,-1.0,(0.0,785.193)}},fill=black,nonzero rule]
          (0.0000,0.0000) node[above right] (text31412) {\scriptsize $S_{i+1}$};
      \end{scope}
    \end{scope}
    \begin{scope}[shift={(331.75,685.68)}]
      \begin{scope}[shift={(0,-782.868)}]
        \path[cm={{1.0,0.0,0.0,-1.0,(0.0,785.193)}},fill=black,nonzero rule]
          (0.0000,0.0000) node[above right] (text31428) {\scriptsize $V_{i+N-1}$};
      \end{scope}
    \end{scope}
    \begin{scope}[shift={(367.601,685.68)}]
      \begin{scope}[shift={(0,-782.868)}]
        \path[cm={{1.0,0.0,0.0,-1.0,(0.0,785.193)}},fill=black,nonzero rule]
          (0.0000,0.0000) node[above right] (text31456) {\scriptsize $S_{i+N-1}$};
      \end{scope}
    \end{scope}
    \begin{scope}[shift={(427.436,686.51)}]
      \begin{scope}[shift={(0,-783.698)}]
        \path[cm={{1.0,0.0,0.0,-1.0,(0.0,785.192)}},fill=black,nonzero rule]
          (0.0000,0.0000) node[above right] (text31484) {\scriptsize $V_i$};
      \end{scope}
    \end{scope}
        \path[fill=c00ff00,even odd rule] (67.0000,704.0000) .. controls
          (67.0000,705.6570) and (65.6569,707.0000) .. (64.0000,707.0000) .. controls
          (62.3431,707.0000) and (61.0000,705.6570) .. (61.0000,704.0000) .. controls
          (61.0000,702.3430) and (62.3431,701.0000) .. (64.0000,701.0000) .. controls
          (65.6569,701.0000) and (67.0000,702.3430) .. (67.0000,704.0000);
        \path[fill=cff0000,even odd rule] (115.0000,704.0000) .. controls
          (115.0000,705.6570) and (113.6570,707.0000) .. (112.0000,707.0000) .. controls
          (110.3430,707.0000) and (109.0000,705.6570) .. (109.0000,704.0000) .. controls
          (109.0000,702.3430) and (110.3430,701.0000) .. (112.0000,701.0000) .. controls
          (113.6570,701.0000) and (115.0000,702.3430) .. (115.0000,704.0000);
        \path[fill=c00ff00,even odd rule] (147.0000,704.0000) .. controls
          (147.0000,705.6570) and (145.6570,707.0000) .. (144.0000,707.0000) .. controls
          (142.3430,707.0000) and (141.0000,705.6570) .. (141.0000,704.0000) .. controls
          (141.0000,702.3430) and (142.3430,701.0000) .. (144.0000,701.0000) .. controls
          (145.6570,701.0000) and (147.0000,702.3430) .. (147.0000,704.0000);
        \path[fill=cff0000,even odd rule] (195.0000,704.0000) .. controls
          (195.0000,705.6570) and (193.6570,707.0000) .. (192.0000,707.0000) .. controls
          (190.3430,707.0000) and (189.0000,705.6570) .. (189.0000,704.0000) .. controls
          (189.0000,702.3430) and (190.3430,701.0000) .. (192.0000,701.0000) .. controls
          (193.6570,701.0000) and (195.0000,702.3430) .. (195.0000,704.0000);
        \path[fill=c00ff00,even odd rule] (335.0000,704.0000) .. controls
          (335.0000,705.6570) and (333.6570,707.0000) .. (332.0000,707.0000) .. controls
          (330.3430,707.0000) and (329.0000,705.6570) .. (329.0000,704.0000) .. controls
          (329.0000,702.3430) and (330.3430,701.0000) .. (332.0000,701.0000) .. controls
          (333.6570,701.0000) and (335.0000,702.3430) .. (335.0000,704.0000);
        \path[fill=cff0000,even odd rule] (367.0000,704.0000) .. controls
          (367.0000,705.6570) and (365.6570,707.0000) .. (364.0000,707.0000) .. controls
          (362.3430,707.0000) and (361.0000,705.6570) .. (361.0000,704.0000) .. controls
          (361.0000,702.3430) and (362.3430,701.0000) .. (364.0000,701.0000) .. controls
          (365.6570,701.0000) and (367.0000,702.3430) .. (367.0000,704.0000);
        \path[fill=c00ff00,even odd rule] (407.0000,704.0000) .. controls
          (407.0000,705.6570) and (405.6570,707.0000) .. (404.0000,707.0000) .. controls
          (402.3430,707.0000) and (401.0000,705.6570) .. (401.0000,704.0000) .. controls
          (401.0000,702.3430) and (402.3430,701.0000) .. (404.0000,701.0000) .. controls
          (405.6570,701.0000) and (407.0000,702.3430) .. (407.0000,704.0000);
        \path[fill=cff0000,even odd rule] (463.0000,704.0000) .. controls
          (463.0000,705.6570) and (461.6570,707.0000) .. (460.0000,707.0000) .. controls
          (458.3430,707.0000) and (457.0000,705.6570) .. (457.0000,704.0000) .. controls
          (457.0000,702.3430) and (458.3430,701.0000) .. (460.0000,701.0000) .. controls
          (461.6570,701.0000) and (463.0000,702.3430) .. (463.0000,704.0000);
        \path[fill=c00ff00,even odd rule] (61.8500,709.1500) -- (66.8500,714.1500) --
          (66.1500,714.8500) -- (61.1500,709.8500) -- (61.8500,709.1500) -- cycle;
        \path[fill=c00ff00,even odd rule] (61.8500,714.8500) -- (66.8500,709.8500) --
          (66.1500,709.1500) -- (61.1500,714.1500) -- (61.8500,714.8500) -- cycle;
        \path[fill=c00ff00,even odd rule] (77.8500,701.1500) -- (82.8500,706.1500) --
          (82.1500,706.8500) -- (77.1500,701.8500) -- (77.8500,701.1500) -- cycle;
        \path[fill=c00ff00,even odd rule] (77.8500,706.8500) -- (82.8500,701.8500) --
          (82.1500,701.1500) -- (77.1500,706.1500) -- (77.8500,706.8500) -- cycle;
        \path[fill=c00ff00,even odd rule] (93.8500,701.1500) -- (98.8500,706.1500) --
          (98.1500,706.8500) -- (93.1500,701.8500) -- (93.8500,701.1500) -- cycle;
        \path[fill=c00ff00,even odd rule] (93.8500,706.8500) -- (98.8500,701.8500) --
          (98.1500,701.1500) -- (93.1500,706.1500) -- (93.8500,706.8500) -- cycle;
        \path[fill=c00ff00,even odd rule] (141.8500,709.1500) -- (146.8500,714.1500) --
          (146.1500,714.8500) -- (141.1500,709.8500) -- (141.8500,709.1500) -- cycle;
        \path[fill=c00ff00,even odd rule] (141.8500,714.8500) -- (146.8500,709.8500) --
          (146.1500,709.1500) -- (141.1500,714.1500) -- (141.8500,714.8500) -- cycle;
        \path[fill=c00ff00,even odd rule] (157.8500,701.1500) -- (162.8500,706.1500) --
          (162.1500,706.8500) -- (157.1500,701.8500) -- (157.8500,701.1500) -- cycle;
        \path[fill=c00ff00,even odd rule] (157.8500,706.8500) -- (162.8500,701.8500) --
          (162.1500,701.1500) -- (157.1500,706.1500) -- (157.8500,706.8500) -- cycle;
        \path[fill=c00ff00,even odd rule] (329.8500,709.1500) -- (334.8500,714.1500) --
          (334.1500,714.8500) -- (329.1500,709.8500) -- (329.8500,709.1500) -- cycle;
        \path[fill=c00ff00,even odd rule] (329.8500,714.8500) -- (334.8500,709.8500) --
          (334.1500,709.1500) -- (329.1500,714.1500) -- (329.8500,714.8500) -- cycle;
        \path[fill=c00ff00,even odd rule] (345.8500,701.1500) -- (350.8500,706.1500) --
          (350.1500,706.8500) -- (345.1500,701.8500) -- (345.8500,701.1500) -- cycle;
        \path[fill=c00ff00,even odd rule] (345.8500,706.8500) -- (350.8500,701.8500) --
          (350.1500,701.1500) -- (345.1500,706.1500) -- (345.8500,706.8500) -- cycle;
        \path[fill=c00ff00,even odd rule] (401.8500,709.1500) -- (406.8500,714.1500) --
          (406.1500,714.8500) -- (401.1500,709.8500) -- (401.8500,709.1500) -- cycle;
        \path[fill=c00ff00,even odd rule] (401.8500,714.8500) -- (406.8500,709.8500) --
          (406.1500,709.1500) -- (401.1500,714.1500) -- (401.8500,714.8500) -- cycle;
        \path[fill=c00ff00,even odd rule] (417.8500,701.1500) -- (422.8500,706.1500) --
          (422.1500,706.8500) -- (417.1500,701.8500) -- (417.8500,701.1500) -- cycle;
        \path[fill=c00ff00,even odd rule] (417.8500,706.8500) -- (422.8500,701.8500) --
          (422.1500,701.1500) -- (417.1500,706.1500) -- (417.8500,706.8500) -- cycle;
        \path[fill=c00ff00,even odd rule] (433.8500,701.1500) -- (438.8500,706.1500) --
          (438.1500,706.8500) -- (433.1500,701.8500) -- (433.8500,701.1500) -- cycle;
        \path[fill=c00ff00,even odd rule] (433.8500,706.8500) -- (438.8500,701.8500) --
          (438.1500,701.1500) -- (433.1500,706.1500) -- (433.8500,706.8500) -- cycle;
        \path[fill=c00ff00,even odd rule] (449.8500,701.1500) -- (454.8500,706.1500) --
          (454.1500,706.8500) -- (449.1500,701.8500) -- (449.8500,701.1500) -- cycle;
        \path[fill=c00ff00,even odd rule] (449.8500,706.8500) -- (454.8500,701.8500) --
          (454.1500,701.1500) -- (449.1500,706.1500) -- (449.8500,706.8500) -- cycle;
        \path[fill=cff0000,even odd rule] (77.8500,709.1500) -- (82.8500,714.1500) --
          (82.1500,714.8500) -- (77.1500,709.8500) -- (77.8500,709.1500) -- cycle;
        \path[fill=cff0000,even odd rule] (77.8500,714.8500) -- (82.8500,709.8500) --
          (82.1500,709.1500) -- (77.1500,714.1500) -- (77.8500,714.8500) -- cycle;
        \path[fill=cff0000,even odd rule] (93.8500,709.1500) -- (98.8500,714.1500) --
          (98.1500,714.8500) -- (93.1500,709.8500) -- (93.8500,709.1500) -- cycle;
        \path[fill=cff0000,even odd rule] (93.8500,714.8500) -- (98.8500,709.8500) --
          (98.1500,709.1500) -- (93.1500,714.1500) -- (93.8500,714.8500) -- cycle;
        \path[fill=cff0000,even odd rule] (109.8500,709.1500) -- (114.8500,714.1500) --
          (114.1500,714.8500) -- (109.1500,709.8500) -- (109.8500,709.1500) -- cycle;
        \path[fill=cff0000,even odd rule] (109.8500,714.8500) -- (114.8500,709.8500) --
          (114.1500,709.1500) -- (109.1500,714.1500) -- (109.8500,714.8500) -- cycle;
        \path[fill=cff0000,even odd rule] (157.8500,709.1500) -- (162.8500,714.1500) --
          (162.1500,714.8500) -- (157.1500,709.8500) -- (157.8500,709.1500) -- cycle;
        \path[fill=cff0000,even odd rule] (157.8500,714.8500) -- (162.8500,709.8500) --
          (162.1500,709.1500) -- (157.1500,714.1500) -- (157.8500,714.8500) -- cycle;
        \path[fill=cff0000,even odd rule] (189.8500,709.1500) -- (194.8500,714.1500) --
          (194.1500,714.8500) -- (189.1500,709.8500) -- (189.8500,709.1500) -- cycle;
        \path[fill=cff0000,even odd rule] (189.8500,714.8500) -- (194.8500,709.8500) --
          (194.1500,709.1500) -- (189.1500,714.1500) -- (189.8500,714.8500) -- cycle;
        \path[fill=cff0000,even odd rule] (345.8500,709.1500) -- (350.8500,714.1500) --
          (350.1500,714.8500) -- (345.1500,709.8500) -- (345.8500,709.1500) -- cycle;
        \path[fill=cff0000,even odd rule] (345.8500,714.8500) -- (350.8500,709.8500) --
          (350.1500,709.1500) -- (345.1500,714.1500) -- (345.8500,714.8500) -- cycle;
        \path[fill=cff0000,even odd rule] (361.8500,709.1500) -- (366.8500,714.1500) --
          (366.1500,714.8500) -- (361.1500,709.8500) -- (361.8500,709.1500) -- cycle;
        \path[fill=cff0000,even odd rule] (361.8500,714.8500) -- (366.8500,709.8500) --
          (366.1500,709.1500) -- (361.1500,714.1500) -- (361.8500,714.8500) -- cycle;
        \path[fill=cff0000,even odd rule] (417.8500,709.1500) -- (422.8500,714.1500) --
          (422.1500,714.8500) -- (417.1500,709.8500) -- (417.8500,709.1500) -- cycle;
        \path[fill=cff0000,even odd rule] (417.8500,714.8500) -- (422.8500,709.8500) --
          (422.1500,709.1500) -- (417.1500,714.1500) -- (417.8500,714.8500) -- cycle;
        \path[fill=cff0000,even odd rule] (433.8500,709.1500) -- (438.8500,714.1500) --
          (438.1500,714.8500) -- (433.1500,709.8500) -- (433.8500,709.1500) -- cycle;
        \path[fill=cff0000,even odd rule] (433.8500,714.8500) -- (438.8500,709.8500) --
          (438.1500,709.1500) -- (433.1500,714.1500) -- (433.8500,714.8500) -- cycle;
        \path[fill=cff0000,even odd rule] (457.8500,709.1500) -- (462.8500,714.1500) --
          (462.1500,714.8500) -- (457.1500,709.8500) -- (457.8500,709.1500) -- cycle;
        \path[fill=cff0000,even odd rule] (457.8500,714.8500) -- (462.8500,709.8500) --
          (462.1500,709.1500) -- (457.1500,714.1500) -- (457.8500,714.8500) -- cycle;
        \path[fill=c00ff00,even odd rule] (219.0000,704.0000) .. controls
          (219.0000,705.6570) and (217.6570,707.0000) .. (216.0000,707.0000) .. controls
          (214.3430,707.0000) and (213.0000,705.6570) .. (213.0000,704.0000) .. controls
          (213.0000,702.3430) and (214.3430,701.0000) .. (216.0000,701.0000) .. controls
          (217.6570,701.0000) and (219.0000,702.3430) .. (219.0000,704.0000);
    \begin{scope}[shift={(268.188,692.0)}]
      \begin{scope}[shift={(0,-790.948)}]
        \path[cm={{1.0,0.0,0.0,-1.0,(0.0,790.948)}},fill=black,nonzero rule]
          (0.0000,0.0000) node[above right] (text31760) {\scriptsize $\cdots$};
      \end{scope}
    \end{scope}
    \begin{scope}[shift={(215.083,727.572)}]
      \begin{scope}[shift={(0,-783.144)}]
        \path[cm={{1.0,0.0,0.0,-1.0,(0.0,785.081)}},fill=black,nonzero rule]
          (0.0000,0.0000) node[above right] (text31768) {\footnotesize Cycle $C_i$};
      \end{scope}
    \end{scope}
      \path[draw=black,line join=round,line cap=stealth,miter limit=10.00,line
        width=0.320pt] (404.0000,724.0000) -- (460.0000,724.0000);
          \path[draw=black,fill=black,line join=round,line cap=stealth,miter limit=10.00,even
            odd rule,line width=0.320pt] (404.0000,724.0000) -- (411.0000,721.6690) --
            (409.6000,724.0000) -- (411.0000,726.3310) -- (404.0000,724.0000) -- cycle;
        \path[fill=c00ff00,even odd rule] (320.7060,775.5390) .. controls
          (320.7060,777.1960) and (319.3630,778.5390) .. (317.7060,778.5390) .. controls
          (316.0490,778.5390) and (314.7060,777.1960) .. (314.7060,775.5390) .. controls
          (314.7060,773.8820) and (316.0490,772.5390) .. (317.7060,772.5390) .. controls
          (319.3630,772.5390) and (320.7060,773.8820) .. (320.7060,775.5390);
        \path[fill=cff0000,even odd rule] (320.7060,761.6710) .. controls
          (320.7060,763.3270) and (319.3630,764.6710) .. (317.7060,764.6710) .. controls
          (316.0490,764.6710) and (314.7060,763.3270) .. (314.7060,761.6710) .. controls
          (314.7060,760.0140) and (316.0490,758.6710) .. (317.7060,758.6710) .. controls
          (319.3630,758.6710) and (320.7060,760.0140) .. (320.7060,761.6710);
    \begin{scope}[shift={(324.0,769.51)}]
      \begin{scope}[shift={(0,-782.037)}]
        \path[cm={{1.0,0.0,0.0,-1.0,(0.0,784.528)}},fill=black,nonzero rule]
          (0.0000,0.0000)  node[anchor=base west] (text31806) {\footnotesize Visit beginning / Switch-over completion};
      \end{scope}
    \end{scope}
    \begin{scope}[shift={(324.0,755.642)}]
      \begin{scope}[shift={(0,-782.037)}]
        \path[cm={{1.0,0.0,0.0,-1.0,(0.0,784.528)}},fill=black,nonzero rule]
          (0.0000,0.0000) node[anchor=base west] (text31814) {\footnotesize Visit completion / Switch-over beginning};
      \end{scope}
    \end{scope}
    \begin{scope}[shift={(324.0,742.881)}]
      \begin{scope}[shift={(0,-783.144)}]
        \path[cm={{1.0,0.0,0.0,-1.0,(0.0,785.081)}},fill=black,nonzero rule]
          (0.0000,0.0000) node[anchor=base west] (text31822) {\footnotesize Service beginning};
      \end{scope}
    \end{scope}
    \begin{scope}[shift={(324.0,729.012)}]
      \begin{scope}[shift={(0,-783.144)}]
        \path[cm={{1.0,0.0,0.0,-1.0,(0.0,785.081)}},fill=black,nonzero rule]
          (0.0000,0.0000) node[anchor=base west] (text31830) {\footnotesize Service completion};
      \end{scope}
    \end{scope}
        \path[fill=cff0000,even odd rule] (315.5550,731.0840) -- (320.5550,736.0840) --
          (319.8550,736.7840) -- (314.8550,731.7840) -- (315.5550,731.0840) -- cycle;
        \path[fill=cff0000,even odd rule] (315.5550,736.7840) -- (320.5550,731.7840) --
          (319.8550,731.0840) -- (314.8550,736.0840) -- (315.5550,736.7840) -- cycle;
        \path[fill=c00ff00,even odd rule] (315.5550,744.9520) -- (320.5550,749.9520) --
          (319.8550,750.6520) -- (314.8550,745.6520) -- (315.5550,744.9520) -- cycle;
        \path[fill=c00ff00,even odd rule] (315.5550,750.6520) -- (320.5550,745.6520) --
          (319.8550,744.9520) -- (314.8550,749.9520) -- (315.5550,750.6520) -- cycle;
        \path[fill=cff0000,even odd rule] (449.8500,709.1500) -- (454.8500,714.1500) --
          (454.1500,714.8500) -- (449.1500,709.8500) -- (449.8500,709.1500) -- cycle;
        \path[fill=cff0000,even odd rule] (449.8500,714.8500) -- (454.8500,709.8500) --
          (454.1500,709.1500) -- (449.1500,714.1500) -- (449.8500,714.8500) -- cycle;
  \end{scope}

\end{tikzpicture}
\par\end{centering}
\caption{Description of a cycle, visit periods, and switch-over times.\label{fig:Description-polling}}
\end{figure}
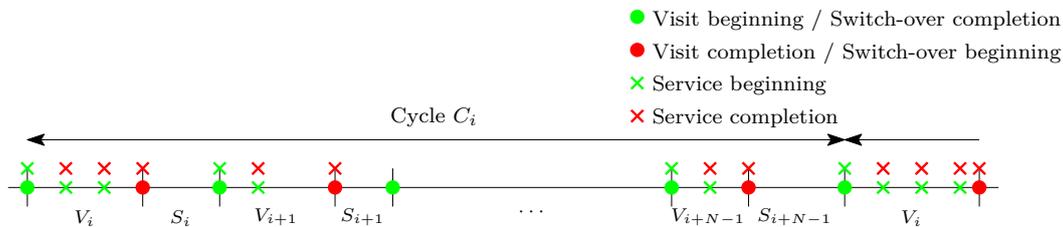

The cycle time $C_{i}$ of $Q_{i}$ is defined as the time between
two successive visits beginning of the server at this queue. A cycle
consists of $N$ visit periods each followed by a switch-over time;
$V_{i},S_{i},V_{i+1},\dots,V_{i+N-1},S_{i+N-1}$ (see \ref{fig:Description-polling}).
A visit period, $V_{i}$, starts with a service beginning and, whenever
there are customers waiting at $Q_{i}$, ends with a service completion.
Its duration equals the sum of service times of the customers served
during the current visit to $Q_{i}$. By definition, a visit beginning
always corresponds with a switch-over completion, whereas a visit
completion corresponds with a switch-over beginning. In case there
are no customers waiting at $Q_{i}$, these two epochs coincide. It
is well known that the mean cycle length is independent of the queue
involved (and the service discipline) and is given by (see, e.g.,
\cite{Takagi1986}) $E\left(C\right)=E\left(S\right)/\left(1-\rho\right)$.

In this paper three different service policies are considered that
satisfy the branching property \cite{Resing1993}. Under the \emph{exhaustive
policy}, when a visit beginning starts at $Q_{i}$ the server continues
to work until the queue becomes empty. Any customer that arrives during
the server's visit to $Q_{i}$ is also served within the current visit.
However, under the \emph{locally-gated policy}, the server only serves
the customers that were present at $Q_{i}$ at its visit beginning;
all customers that arrive during the course of the visit are served
in the next visit to $Q_{i}$. The final policy is the \emph{globally-gated
policy}; according to this policy the server will only serve the customers
who were present at all queues at the visit beginning of a reference
queue, which is normally assumed to be $Q_{1}$. Customers arriving
after this visit beginning will only be served after the server has
finished its current cycle. This policy strongly resembles the locally-gated
policy, except that all queues are gated at the same time instead
of one per visit beginning. 

\begin{figure}[tph]
\noindent \begin{centering}
\includegraphics{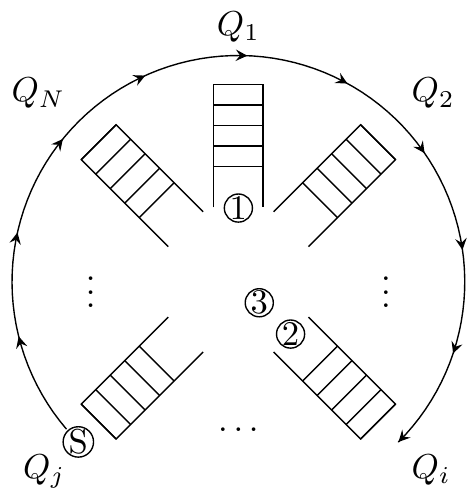}\begin{tikzpicture}[y=0.80pt, x=0.80pt, yscale=-0.900000, xscale=0.900000, inner sep=0pt, outer sep=0pt]
\definecolor{cff0000}{RGB}{255,0,0}
\begin{scope}[shift={(-855.71429,-1232.3543)}]
  \begin{scope}[cm={{1.25,0.0,0.0,-1.25,(785.71429,2208.7021)}}]
      \path[draw=black,line join=round,line cap=stealth,miter limit=10.00,line
        width=0.320pt] (360.0000,704.0000) -- (56.0000,704.0000);
      \path[draw=black,line join=round,line cap=stealth,miter limit=10.00,line
        width=0.320pt] (64.0000,712.0000) -- (64.0000,696.0000);
      \path[draw=black,line join=round,line cap=stealth,miter limit=10.00,line
        width=0.320pt] (96.0000,712.0000) -- (96.0000,696.0000);
      \path[draw=black,line join=round,line cap=stealth,miter limit=10.00,line
        width=0.320pt] (204.0000,696.0000) -- (204.0000,712.0000);
      \path[draw=black,line join=round,line cap=stealth,miter limit=10.00,line
        width=0.320pt] (80.0000,724.0000) -- (328.0000,724.0000);
          \path[draw=black,fill=black,line join=round,line cap=stealth,miter limit=10.00,even
            odd rule,line width=0.320pt] (328.0000,724.0000) -- (321.0000,726.3310) --
            (322.4000,724.0000) -- (321.0000,721.6690) -- (328.0000,724.0000) -- cycle;
          \path[draw=black,fill=black,line join=round,line cap=stealth,miter limit=10.00,even
            odd rule,line width=0.320pt] (80.0000,724.0000) -- (87.0000,721.6690) --
            (85.6000,724.0000) -- (87.0000,726.3310) -- (80.0000,724.0000) -- cycle;
    \begin{scope}[shift={(74.9955,685.15)}]
      \begin{scope}[shift={(0,-782.342)}]
        \path[cm={{1.0,0.0,0.0,-1.0,(0.0,785.192)}},fill=black,nonzero rule]
          (0.0000,0.0000) node[above right] (text32195) {\scriptsize $V_j$};
      \end{scope}
    \end{scope}
    \begin{scope}[shift={(100.847,685.15)}]
      \begin{scope}[shift={(0,-782.342)}]
        \path[cm={{1.0,0.0,0.0,-1.0,(0.0,785.192)}},fill=black,nonzero rule]
          (0.0000,0.0000) node[above right] (text32207) {\scriptsize $S_j$};
      \end{scope}
    \end{scope}
    \begin{scope}[shift={(315.436,686.51)}]
      \begin{scope}[shift={(0,-783.698)}]
        \path[cm={{1.0,0.0,0.0,-1.0,(0.0,785.192)}},fill=black,nonzero rule]
          (0.0000,0.0000) node[above right] (text32219) {\scriptsize $V_i$};
      \end{scope}
    \end{scope}
    \begin{scope}[shift={(230.189,692.0)}]
      \begin{scope}[shift={(0,-790.948)}]
        \path[cm={{1.0,0.0,0.0,-1.0,(0.0,790.948)}},fill=black,nonzero rule]
          (0.0000,0.0000) node[above right] (text32231) {\scriptsize $\cdots$};
      \end{scope}
    \end{scope}
    \begin{scope}[shift={(80.1185,759.572)}]
      \begin{scope}[shift={(0,-783.255)}]
        \path[cm={{1.0,0.0,0.0,-1.0,(0.0,785.192)}},fill=black,nonzero rule]
          (0.0000,0.0000) node[right] (text32239) {\footnotesize Sojourn time customer 1};
      \end{scope}
    \end{scope}
      \path[draw=black,line join=round,line cap=stealth,miter limit=10.00,line
        width=0.320pt] (288.0000,712.0000) -- (288.0000,696.0000);
      \path[draw=black,line join=round,line cap=stealth,miter limit=10.00,line
        width=0.320pt] (352.0000,712.0000) -- (352.0000,696.0000);
      \path[draw=black,line join=round,line cap=stealth,miter limit=10.00,line
        width=0.320pt] (116.0000,696.0000) -- (116.0000,712.0000);
    \begin{scope}[shift={(140.189,692.0)}]
      \begin{scope}[shift={(0,-790.948)}]
        \path[cm={{1.0,0.0,0.0,-1.0,(0.0,790.948)}},fill=black,nonzero rule]
          (0.0000,0.0000) node[above right] (text32259) {\scriptsize $\cdots$};
      \end{scope}
    \end{scope}
      \path[draw=black,line join=round,line cap=stealth,miter limit=10.00,line
        width=0.320pt] (176.0000,696.0000) -- (176.0000,712.0000);
    \begin{scope}[shift={(184.859,686.51)}]
      \begin{scope}[shift={(0,-783.698)}]
        \path[cm={{1.0,0.0,0.0,-1.0,(0.0,785.192)}},fill=black,nonzero rule]
          (0.0000,0.0000) node[above right] (text32271) {\scriptsize $V_1$};
      \end{scope}
    \end{scope}
    \begin{scope}[shift={(268.188,685.68)}]
      \begin{scope}[shift={(0,-782.868)}]
        \path[cm={{1.0,0.0,0.0,-1.0,(0.0,785.193)}},fill=black,nonzero rule]
          (0.0000,0.0000) node[above right] (text32283) {\scriptsize $S_{i-1}$};
      \end{scope}
    \end{scope}
      \path[draw=black,line join=round,line cap=stealth,miter limit=10.00,line
        width=0.320pt] (268.0000,712.0000) -- (268.0000,696.0000);
    \begin{scope}[shift={(189.696,713.111)}]
      \begin{scope}[shift={(0,-786.222)}]
        \path[cm={{1.0,0.0,0.0,-1.0,(0.0,786.222)}},fill=black,nonzero rule]
          (0.0000,0.0000) node[above right] (text32307) {\scriptsize 1};
      \end{scope}
    \end{scope}
      \path[draw=black,line join=round,line cap=stealth,miter limit=10.00,line
        width=0.320pt] (196.0000,716.0000) .. controls (196.0000,718.2090) and
        (194.2090,720.0000) .. (192.0000,720.0000) .. controls (189.7910,720.0000) and
        (188.0000,718.2090) .. (188.0000,716.0000) .. controls (188.0000,713.7910) and
        (189.7910,712.0000) .. (192.0000,712.0000) .. controls (194.2090,712.0000) and
        (196.0000,713.7910) .. (196.0000,716.0000);
    \begin{scope}[shift={(309.696,713.111)}]
      \begin{scope}[shift={(0,-786.222)}]
        \path[cm={{1.0,0.0,0.0,-1.0,(0.0,786.222)}},fill=black,nonzero rule]
          (0.0000,0.0000) node[above right] (text32319) {\scriptsize 2};
      \end{scope}
    \end{scope}
      \path[draw=black,line join=round,line cap=stealth,miter limit=10.00,line
        width=0.320pt] (316.0000,716.0000) .. controls (316.0000,718.2090) and
        (314.2090,720.0000) .. (312.0000,720.0000) .. controls (309.7910,720.0000) and
        (308.0000,718.2090) .. (308.0000,716.0000) .. controls (308.0000,713.7910) and
        (309.7910,712.0000) .. (312.0000,712.0000) .. controls (314.2090,712.0000) and
        (316.0000,713.7910) .. (316.0000,716.0000);
    \begin{scope}[shift={(325.696,713.111)}]
      \begin{scope}[shift={(0,-786.222)}]
        \path[cm={{1.0,0.0,0.0,-1.0,(0.0,786.222)}},fill=black,nonzero rule]
          (0.0000,0.0000) node[above right] (text32331) {\scriptsize 3};
      \end{scope}
    \end{scope}
      \path[draw=black,line join=round,line cap=stealth,miter limit=10.00,line
        width=0.320pt] (332.0000,716.0000) .. controls (332.0000,718.2090) and
        (330.2090,720.0000) .. (328.0000,720.0000) .. controls (325.7910,720.0000) and
        (324.0000,718.2090) .. (324.0000,716.0000) .. controls (324.0000,713.7910) and
        (325.7910,712.0000) .. (328.0000,712.0000) .. controls (330.2090,712.0000) and
        (332.0000,713.7910) .. (332.0000,716.0000);
        \path[fill=cff0000,even odd rule] (69.8500,701.1500) -- (74.8500,706.1500) --
          (74.1500,706.8500) -- (69.1500,701.8500) -- (69.8500,701.1500) -- cycle;
        \path[fill=cff0000,even odd rule] (69.8500,706.8500) -- (74.8500,701.8500) --
          (74.1500,701.1500) -- (69.1500,706.1500) -- (69.8500,706.8500) -- cycle;
        \path[fill=cff0000,even odd rule] (93.8500,701.1500) -- (98.8500,706.1500) --
          (98.1500,706.8500) -- (93.1500,701.8500) -- (93.8500,701.1500) -- cycle;
        \path[fill=cff0000,even odd rule] (93.8500,706.8500) -- (98.8500,701.8500) --
          (98.1500,701.1500) -- (93.1500,706.1500) -- (93.8500,706.8500) -- cycle;
        \path[fill=cff0000,even odd rule] (189.8500,701.1500) -- (194.8500,706.1500) --
          (194.1500,706.8500) -- (189.1500,701.8500) -- (189.8500,701.1500) -- cycle;
        \path[fill=cff0000,even odd rule] (189.8500,706.8500) -- (194.8500,701.8500) --
          (194.1500,701.1500) -- (189.1500,706.1500) -- (189.8500,706.8500) -- cycle;
        \path[fill=cff0000,even odd rule] (201.8500,701.1500) -- (206.8500,706.1500) --
          (206.1500,706.8500) -- (201.1500,701.8500) -- (201.8500,701.1500) -- cycle;
        \path[fill=cff0000,even odd rule] (201.8500,706.8500) -- (206.8500,701.8500) --
          (206.1500,701.1500) -- (201.1500,706.1500) -- (201.8500,706.8500) -- cycle;
        \path[fill=cff0000,even odd rule] (297.8500,701.1500) -- (302.8500,706.1500) --
          (302.1500,706.8500) -- (297.1500,701.8500) -- (297.8500,701.1500) -- cycle;
        \path[fill=cff0000,even odd rule] (297.8500,706.8500) -- (302.8500,701.8500) --
          (302.1500,701.1500) -- (297.1500,706.1500) -- (297.8500,706.8500) -- cycle;
        \path[fill=cff0000,even odd rule] (309.8500,701.1500) -- (314.8500,706.1500) --
          (314.1500,706.8500) -- (309.1500,701.8500) -- (309.8500,701.1500) -- cycle;
        \path[fill=cff0000,even odd rule] (309.8500,706.8500) -- (314.8500,701.8500) --
          (314.1500,701.1500) -- (309.1500,706.1500) -- (309.8500,706.8500) -- cycle;
        \path[fill=cff0000,even odd rule] (325.8500,701.1500) -- (330.8500,706.1500) --
          (330.1500,706.8500) -- (325.1500,701.8500) -- (325.8500,701.1500) -- cycle;
        \path[fill=cff0000,even odd rule] (325.8500,706.8500) -- (330.8500,701.8500) --
          (330.1500,701.1500) -- (325.1500,706.1500) -- (325.8500,706.8500) -- cycle;
        \path[fill=cff0000,even odd rule] (341.8500,701.1500) -- (346.8500,706.1500) --
          (346.1500,706.8500) -- (341.1500,701.8500) -- (341.8500,701.1500) -- cycle;
        \path[fill=cff0000,even odd rule] (341.8500,706.8500) -- (346.8500,701.8500) --
          (346.1500,701.1500) -- (341.1500,706.1500) -- (341.8500,706.8500) -- cycle;
        \path[fill=cff0000,even odd rule] (349.8500,701.1500) -- (354.8500,706.1500) --
          (354.1500,706.8500) -- (349.1500,701.8500) -- (349.8500,701.1500) -- cycle;
        \path[fill=cff0000,even odd rule] (349.8500,706.8500) -- (354.8500,701.8500) --
          (354.1500,701.1500) -- (349.1500,706.1500) -- (349.8500,706.8500) -- cycle;
    \begin{scope}[shift={(77.44,712.937)}]
      \begin{scope}[shift={(0,-785.873)}]
        \path[cm={{1.0,0.0,0.0,-1.0,(0.0,785.873)}},fill=black,nonzero rule]
          (0.0000,0.0000) node[above right] (text32433) {\scriptsize S};
      \end{scope}
    \end{scope}
      \path[draw=black,line join=round,line cap=stealth,miter limit=10.00,line
        width=0.320pt] (84.0000,716.0000) .. controls (84.0000,718.2090) and
        (82.2091,720.0000) .. (80.0000,720.0000) .. controls (77.7909,720.0000) and
        (76.0000,718.2090) .. (76.0000,716.0000) .. controls (76.0000,713.7910) and
        (77.7909,712.0000) .. (80.0000,712.0000) .. controls (82.2091,712.0000) and
        (84.0000,713.7910) .. (84.0000,716.0000);
      \path[draw=black,line join=round,line cap=stealth,miter limit=10.00,line
        width=0.320pt] (80.0000,756.0000) -- (192.0000,756.0000);
          \path[draw=black,fill=black,line join=round,line cap=stealth,miter limit=10.00,even
            odd rule,line width=0.320pt] (192.0000,756.0000) -- (185.0000,758.3310) --
            (186.4000,756.0000) -- (185.0000,753.6690) -- (192.0000,756.0000) -- cycle;
          \path[draw=black,fill=black,line join=round,line cap=stealth,miter limit=10.00,even
            odd rule,line width=0.320pt] (80.0000,756.0000) -- (87.0000,753.6690) --
            (85.6000,756.0000) -- (87.0000,758.3310) -- (80.0000,756.0000) -- cycle;
    \begin{scope}[shift={(80.1185,743.572)}]
      \begin{scope}[shift={(0,-783.255)}]
        \path[cm={{1.0,0.0,0.0,-1.0,(0.0,785.192)}},fill=black,nonzero rule]
          (0.0000,0.0000) node[right] (text32461) {\footnotesize Sojourn time customer 2};
      \end{scope}
    \end{scope}
    \begin{scope}[shift={(80.27,727.012)}]
      \begin{scope}[shift={(0,-782.037)}]
        \path[cm={{1.0,0.0,0.0,-1.0,(0.0,784.528)}},fill=black,nonzero rule]
          (0.0000,0.0000) node[right] (text32469)
          {\footnotesize Sojourn time customer 3 / Batch sojourn time};
      \end{scope}
    \end{scope}
        \path[fill=cff0000,even odd rule] (253.8500,773.1500) -- (258.8500,778.1500) --
          (258.1500,778.8500) -- (253.1500,773.8500) -- (253.8500,773.1500) -- cycle;
        \path[fill=cff0000,even odd rule] (253.8500,778.8500) -- (258.8500,773.8500) --
          (258.1500,773.1500) -- (253.1500,778.1500) -- (253.8500,778.8500) -- cycle;
    \begin{scope}[shift={(253.44,760.937)}]
      \begin{scope}[shift={(0,-785.873)}]
        \path[cm={{1.0,0.0,0.0,-1.0,(0.0,785.873)}},fill=black,nonzero rule]
          (0.0000,0.0000)  node[above right] (text32487) {\scriptsize S};
      \end{scope}
    \end{scope}
      \path[draw=black,line join=round,line cap=stealth,miter limit=10.00,line
        width=0.320pt] (260.0000,764.0000) .. controls (260.0000,766.2090) and
        (258.2090,768.0000) .. (256.0000,768.0000) .. controls (253.7910,768.0000) and
        (252.0000,766.2090) .. (252.0000,764.0000) .. controls (252.0000,761.7910) and
        (253.7910,760.0000) .. (256.0000,760.0000) .. controls (258.2090,760.0000) and
        (260.0000,761.7910) .. (260.0000,764.0000);
    \begin{scope}[shift={(253.696,749.111)}]
      \begin{scope}[shift={(0,-786.222)}]
        \path[cm={{1.0,0.0,0.0,-1.0,(0.0,786.222)}},fill=black,nonzero rule]
          (0.0000,0.0000) node[above right] (text32499) {\scriptsize 1};
      \end{scope}
    \end{scope}
      \path[draw=black,line join=round,line cap=stealth,miter limit=10.00,line
        width=0.320pt] (260.0000,752.0000) .. controls (260.0000,754.2090) and
        (258.2090,756.0000) .. (256.0000,756.0000) .. controls (253.7910,756.0000) and
        (252.0000,754.2090) .. (252.0000,752.0000) .. controls (252.0000,749.7910) and
        (253.7910,748.0000) .. (256.0000,748.0000) .. controls (258.2090,748.0000) and
        (260.0000,749.7910) .. (260.0000,752.0000);
    \begin{scope}[shift={(265.696,749.111)}]
      \begin{scope}[shift={(0,-786.222)}]
        \path[cm={{1.0,0.0,0.0,-1.0,(0.0,786.222)}},fill=black,nonzero rule]
          (0.0000,0.0000) node[above right] (text32511) {\scriptsize 2};
      \end{scope}
    \end{scope}
      \path[draw=black,line join=round,line cap=stealth,miter limit=10.00,line
        width=0.320pt] (272.0000,752.0000) .. controls (272.0000,754.2090) and
        (270.2090,756.0000) .. (268.0000,756.0000) .. controls (265.7910,756.0000) and
        (264.0000,754.2090) .. (264.0000,752.0000) .. controls (264.0000,749.7910) and
        (265.7910,748.0000) .. (268.0000,748.0000) .. controls (270.2090,748.0000) and
        (272.0000,749.7910) .. (272.0000,752.0000);
    \begin{scope}[shift={(277.696,749.111)}]
      \begin{scope}[shift={(0,-786.222)}]
        \path[cm={{1.0,0.0,0.0,-1.0,(0.0,786.222)}},fill=black,nonzero rule]
          (0.0000,0.0000) node[above right] (text32523) {\scriptsize 3};
      \end{scope}
    \end{scope}
      \path[draw=black,line join=round,line cap=stealth,miter limit=10.00,line
        width=0.320pt] (284.0000,752.0000) .. controls (284.0000,754.2090) and
        (282.2090,756.0000) .. (280.0000,756.0000) .. controls (277.7910,756.0000) and
        (276.0000,754.2090) .. (276.0000,752.0000) .. controls (276.0000,749.7910) and
        (277.7910,748.0000) .. (280.0000,748.0000) .. controls (282.2090,748.0000) and
        (284.0000,749.7910) .. (284.0000,752.0000);
    \begin{scope}[shift={(264.0,771.572)}]
      \begin{scope}[shift={(0,-783.144)}]
        \path[cm={{1.0,0.0,0.0,-1.0,(0.0,785.081)}},fill=black,nonzero rule]
          (0.0000,0.0000) node[anchor=base west] (text32535) {\footnotesize Service completion};
      \end{scope}
    \end{scope}
    \begin{scope}[shift={(264.0,760.596)}]
      \begin{scope}[shift={(0,-785.192)}]
        \path[cm={{1.0,0.0,0.0,-1.0,(0.0,785.192)}},fill=black,nonzero rule]
          (0.0000,0.0000) node[above right] (text32543) {\footnotesize Server};
      \end{scope}
    \end{scope}
    \begin{scope}[shift={(288.0,748.596)}]
      \begin{scope}[shift={(0,-785.192)}]
        \path[cm={{1.0,0.0,0.0,-1.0,(0.0,785.192)}},fill=black,nonzero rule]
          (0.0000,0.0000) node[above right] (text32551) {\footnotesize Customers};
      \end{scope}
    \end{scope}
      \path[draw=black,line join=round,line cap=stealth,miter limit=10.00,line
        width=0.320pt] (80.0000,740.0000) -- (312.0000,740.0000);
          \path[draw=black,fill=black,line join=round,line cap=stealth,miter limit=10.00,even
            odd rule,line width=0.320pt] (312.0000,740.0000) -- (305.0000,742.3310) --
            (306.4000,740.0000) -- (305.0000,737.6690) -- (312.0000,740.0000) -- cycle;
          \path[draw=black,fill=black,line join=round,line cap=stealth,miter limit=10.00,even
            odd rule,line width=0.320pt] (80.0000,740.0000) -- (87.0000,737.6690) --
            (85.6000,740.0000) -- (87.0000,742.3310) -- (80.0000,740.0000) -- cycle;
  \end{scope}
\end{scope}

\end{tikzpicture}
\par\end{centering}
\caption{Description of the batch sojourn-time.\label{fig:The-batch-sojourn}}
\end{figure}

The batch sojourn-time of a specific customer batch $\boldsymbol{k}$,
denoted by $T_{\boldsymbol{k}}$ and its LST by $\widetilde{T}_{\boldsymbol{k}}\left(.\right)$,
is defined as the time between its arrival epoch until the service
completion of the last customer in the arrived batch; see \ref{fig:The-batch-sojourn}.
In this example assume that when the server is in a visit period of
$Q_{j}$, a batch of three customers arrives in $Q_{1}$ and $Q_{i}$.
Then the batch sojourn-time of this batch equals the residual time
in $V_{j}$, switch-over times $S_{j},\dots,S_{i-1}$, visit periods
$V_{j+1},\dots,V_{i-1}$, and the time until service completion of
the last customer of the batch in $V_{i}$. By definition, the batch
sojourn-time corresponds with the sojourn-time of the last customer
that is served within the batch. It is important to realize that the
queue where the batch finishes service \emph{depends} on the location
of the server of the arrival of the batch and there is no fixed order
in which the customers need to be served. The order in which the customers
are served in this example is the same for the three service policies,
but varies between disciplines depending the location of the server.
Finally, the batch sojourn-time of an arbitrary customer batch is
denoted by $T$ and its corresponding LST by $\widetilde{T}\left(.\right)$.

Throughout this paper we make references to the server path from $Q_{i}$
to $Q_{j}$, which should be understood in a cyclic sense; e.g. $Q_{i},Q_{i+1},\dots,Q_{j}$
if $i\leq j$, and otherwise $Q_{i},Q_{i+1},\dots,Q_{N},\allowbreak Q_{1},\dots,Q_{j}$
if $i>j$. For the ease of notation, we define a \emph{cyclic sum}
and, analogously, a \emph{cyclic product }as \cite{Boxma1990}
\[
\sideset{}{'}\sum_{l=i}^{j}x_{l}:=\begin{cases}
\sum_{l=i}^{j}\limits x_{l}, & \mbox{if }i\leq j,\\
\sum_{l=i}^{N}\limits x_{l}+\sum_{l=1}^{j}\limits x_{l}, & \mbox{if }i>j,
\end{cases}\quad\sideset{}{'}\prod_{l=i}^{j}x_{l}:=\begin{cases}
\prod_{l=i}^{j}\limits x_{l}, & \mbox{if }i\leq j,\\
\prod_{l=i}^{N}\limits x_{l}\times\prod_{l=1}^{j}\limits x_{l}, & \mbox{if }i>j,
\end{cases}
\]

and alternatively,
\[
\sideset{}{'}\sum_{l=0}^{j-i}x_{i+l}:=\begin{cases}
\hphantom{\,\,}\sum_{l=0}^{j-i}\limits x_{i+l}, & \mbox{if }i\leq j,\\
\sum_{l=0}^{j+N-i}\limits x_{i+l}, & \mbox{if }i>j,
\end{cases}\quad\sideset{}{'}\prod_{l=0}^{j-i}x_{l}:=\begin{cases}
\hphantom{\,\,}\prod_{l=0}^{j-i}\limits x_{i+l}, & \mbox{if }i\leq j,\\
\prod_{l=0}^{j+N-i}\limits x_{i+l}, & \mbox{if }i>j.
\end{cases}
\]

Finally, let $\mathcal{K}_{i,j}$ be a subset of support $\mathcal{K}$
where the last customer of an arbitrary arriving customer batch is
served in $Q_{j}$ and all its other customers are served in $Q_{i},\dots,Q_{j}$.
 By definition, a batch will complete its service in one of the queues,
such that $\bigcup_{j=1}^{N}\mathcal{K}_{i,j}=\mathcal{K}$, $i=1,\dots,N$.
The corresponding probability of subset $\mathcal{K}_{i,j}$ is given
by,
\[
\pi\left(\mathcal{K}_{i,j}\right)=\begin{cases}
\mathbb{P}\left(K_{j}>0,K_{j+1}=0,\dots,K_{i-1}=0\right),\quad & j=1,\dots,N,\,i\neq j+1,\\
\mathbb{P}\left(K_{j}>0\right), & \mbox{otherwise.}
\end{cases}
\]

In addition, let $E\left(K_{l}|\mathcal{K}_{i,j}\right)$ be the conditional
expected number of customers that have arrived in $Q_{l}$, $l=1,\dots,N$
given subset $\mathcal{K}_{i,j}$. We define $\widetilde{K}\left(\boldsymbol{z}|\mathcal{K}_{i,j}\right)$
as the conditional PGF of the distribution of the number of customers
that arrive in $Q_{i},\dots,Q_{j}$ given $\mathcal{K}_{i,j}$,
\begin{equation}
\widetilde{K}\left(\boldsymbol{z}|\mathcal{K}_{i,j}\right)=\sum_{\boldsymbol{k}\in\mathcal{K}_{i,j}}\frac{\pi\left(\boldsymbol{k}\right)}{\pi\left(\mathcal{K}_{i,j}\right)}\sideset{}{'}\prod_{l=i}^{j}z_{l}^{k_{l}},\label{eq:lst-ex-batch}
\end{equation}
such that $\widetilde{K}\left(\boldsymbol{z}\right)=\sum_{j=1}^{N}\pi\left(\mathcal{K}_{i,j}\right)\widetilde{K}\left(\boldsymbol{z}|\mathcal{K}_{i,j}\right)$,
$i=1,\dots,N$. 

\section{Exhaustive service\label{sec:Exhaustive-service}}

In this section, we start by deriving the LST of the batch sojourn-time
distribution of a specific batch of customers in case of exhaustive
service. The batch sojourn-time distribution is found by conditioning
on the numbers of customers present in each queue at an arrival epoch
and then studying the evolution of the system until all customers
within the batch have been served. For this analysis, we first study
the joint queue-length distribution at several embedded epochs in
\ref{subsec:The-joint-queue-length-ex}. We use these results to determine
the LST of the batch sojourn-time distribution for both a specific
and an arbitrary batch of arriving customers in \ref{subsec:Batch-sojourn-distribution-ex},
and present a Mean Value Analysis (MVA) to calculate the mean batch
sojourn-time in \ref{subsec:Mean-value-analysis-ex}.

\subsection{The joint queue-length distribution\label{subsec:The-joint-queue-length-ex}}

In the polling literature, the probability generating function (PGF)
of the joint queue-length distribution at various epochs is extensively
studied (e.g. \cite{Takagi1986,Kleinrock1988,Levy1990}). Let $\widetilde{LB}^{\left(V_{i}\right)}\left(\boldsymbol{z}\right)$
and $\widetilde{LC}^{\left(V_{i}\right)}\left(\boldsymbol{z}\right)$
be the joint queue-length PGF at \emph{visit} beginnings and completions
at $Q_{i}$, where $\boldsymbol{z}=\left(z_{1},\dots,z_{N}\right)$
is an $N$-dimensional vector with $\left|z_{i}\right|\leq1$ . Similarly,
let $\widetilde{LB}^{\left(S_{i}\right)}\left(\boldsymbol{z}\right)$
and $\widetilde{LC}^{\left(S_{i}\right)}\left(\boldsymbol{z}\right)$
be the joint queue-length PGFs at \emph{switch-over} beginnings and
completions at $Q_{i}$, respectively. Because of the branching property
\cite{Resing1993}, these PGFs can be related to each other as follows,
\begin{align}
\widetilde{LC}^{\left(V_{i}\right)}\left(\boldsymbol{z}\right)= & \widetilde{LB}^{\left(V_{i}\right)}\left(z_{1},\dots,z_{i-1},\right.\nonumber \\
 & \quad\left.\widetilde{BP}_{i}\left(\lambda-\lambda\widetilde{K}\left(z_{1},\dots,z_{i-1},1,z_{i+1},\dots,z_{N}\right)\right),z_{i+1},\dots,z_{N}\right),\label{eq:lm-ex-1}\\
\widetilde{LB}^{\left(S_{i}\right)}\left(\boldsymbol{z}\right)= & \widetilde{LC}^{\left(V_{i}\right)}\left(\boldsymbol{z}\right),\label{eq:lm-ex-2}\\
\widetilde{LC}^{\left(S_{i}\right)}\left(\boldsymbol{z}\right)= & \widetilde{LB}^{\left(S_{i}\right)}\left(\boldsymbol{z}\right)\widetilde{S}_{i}\left(\lambda-\lambda\widetilde{K}\left(\boldsymbol{z}\right)\right),\label{eq:lm-ex-3}\\
\widetilde{LB}^{\left(V_{i+1}\right)}\left(\boldsymbol{z}\right)= & \widetilde{LC}^{\left(S_{i}\right)}\left(\boldsymbol{z}\right),\label{eq:lm-ex-4}
\end{align}
where $i=1,\dots,N$ and $\widetilde{BP}_{i}\left(.\right)$ is the
LST of a busy-period in $Q_{i}$ of an $M/G/1$ queue and is given
by, 
\begin{align}
\widetilde{BP}_{i}\left(\omega\right) & =\widetilde{B}_{i}\left(\omega+\lambda-\lambda\widetilde{K}_{i}\left(\widetilde{BP}_{i}\left(\omega\right)\right)\right).
\end{align}
Equations~\eqref{eq:lm-ex-1}-\eqref{eq:lm-ex-4} are referred in
the polling literature as the \emph{laws of motion}. The interpretation
of \eqref{eq:lm-ex-1} is that the queue-length in $Q_{j}$, $j\neq i$
at the end of visit period $V_{i}$ is given by the number of customers
already at $Q_{j}$ at the visit beginning plus all the customers
that arrive in the system during visit period $V_{i}$. For $Q_{i}$,
all customers that are already in $Q_{i}$ or arrive during $V_{i}$
will be served before the end of the visit completion, and, therefore,
$Q_{i}$ will contain no customers at the end of the visit period.
Equation~\eqref{eq:lm-ex-2} simply states that the PGF of a visit
completion corresponds to the PGF of the next switch-over beginning
(see also \ref{fig:Description-polling}). Finally, the queue-length
vector at a switch-over completion corresponds to the sum of customers
already present at the switch-over beginning plus all the customers
that arrive during this switch-over period \eqref{eq:lm-ex-3}, and
by definition the queue-length vector at a switch-over completion
is the same for the next visit beginning \eqref{eq:lm-ex-4}. Note
that equations~\eqref{eq:lm-ex-1}-\eqref{eq:lm-ex-4} can be differentiated
with respect to $z_{1},\dots,z_{N}$ to compute moments of the queue-length
distributions on embedded points \cite{Levy1991} or numerically inverted
for the queue-length probability distributions (e.g. \cite{Choudhury1996}
for the case for non-simultaneously arrivals). 

Let $\widetilde{LB}^{\left(B_{i}\right)}\left(\boldsymbol{z}\right)$
and $\widetilde{LC}^{\left(B_{i}\right)}\left(\boldsymbol{z}\right)$
be the joint queue-length PGFs at \emph{service} beginnings and completions
at $Q_{i}$. \cite{Eisenberg1972} proved, that besides the laws of
motion, there exists a simple relation between the joint queue-length
distributions at \emph{visit-} and \emph{service} beginnings and completions.
He observed that each visit beginning either starts with a service
beginning, or with a visit completion in case there are no customers
at the queue. Similarly, each visit completion coincides with either
a visit beginning or a service completion. \cite{Eisenberg1972} only
considered polling systems either with exhaustive or gated service
at all queues and individual arriving customers, but \cite{Boxma2011}
has proven that the relation is not restricted to a particular service
discipline and also holds for general branching-type service disciplines.
In this section, we generalize this result for the case of simultaneous
batch arrivals. Similar as in \cite{Eisenberg1972}, the four PGFs
are related as follows, 
\begin{align}
\widetilde{LB}^{\left(V_{i}\right)}\left(\boldsymbol{z}\right)+\lambda_{i}E\left(C\right)\widetilde{LC}^{\left(B_{i}\right)}\left(\boldsymbol{z}\right) & =\lambda_{i}E\left(C\right)\widetilde{LB}^{\left(B_{i}\right)}\left(\boldsymbol{z}\right)+\widetilde{LC}^{\left(V_{i}\right)}\left(\boldsymbol{z}\right),\label{eq:Eisenberg}
\end{align}
where the term $1/\left(\lambda_{i}E\left(C\right)\right)$ is the
long-run ratio between the number of service beginnings/ completions
and visit beginnings/completions in $Q_{i}$, for every $i=1,\dots,N$. 

Furthermore, the joint queue-length distribution at service beginnings
and completions are related via,
\begin{align}
\widetilde{LC}^{\left(B_{i}\right)}\left(\boldsymbol{z}\right) & =\widetilde{LB}^{\left(B_{i}\right)}\left(\boldsymbol{z}\right)\left[\widetilde{B}_{i}\left(\lambda-\lambda\widetilde{K}\left(\boldsymbol{z}\right)\right)/z_{i}\right].\label{eq:Eisenberg-2}
\end{align}

Substituting \eqref{eq:Eisenberg-2} in \eqref{eq:Eisenberg} and
rearranging terms, the joint queue-length distribution at a service
beginning can be written as, 
\begin{align}
\widetilde{LB}^{\left(B_{i}\right)}\left(\boldsymbol{z}\right) & =\frac{z_{i}\left(\widetilde{LC}^{\left(V_{i}\right)}\left(\boldsymbol{z}\right)-\widetilde{LB}^{\left(V_{i}\right)}\left(\boldsymbol{z}\right)\right)}{\lambda_{i}E\left(C\right)\left(\widetilde{B}_{i}\left(\lambda-\lambda\widetilde{K}\left(\boldsymbol{z}\right)\right)-z_{i}\right)}.\label{eq:lst-lb}
\end{align}
Next, we can find the PGFs of the joint queue-length distributions
at an arbitrary moment during $V_{i}$ and $S_{i}$, denoted by $\tilde{L}^{\left(V_{i}\right)}\left(\boldsymbol{z}\right)$
and $\tilde{L}^{\left(S_{i}\right)}\left(\boldsymbol{z}\right)$,
by noticing that the queue-length at an arbitrary moment in $V_{i}$
or $S_{i}$ is equal to the queue length at service/switch-over beginning
plus the number of customers that arrived in the past service/switch-over
time,{\allowdisplaybreaks
\begin{align}
\tilde{L}^{\left(V_{i}\right)}\left(\boldsymbol{z}\right) & =\widetilde{LB}^{\left(B_{i}\right)}\left(\boldsymbol{z}\right)\frac{1-\widetilde{B}_{i}\left(\lambda-\lambda\widetilde{K}\left(\boldsymbol{z}\right)\right)}{E\left(B_{i}\right)\left(\lambda-\lambda\widetilde{K}\left(\boldsymbol{z}\right)\right)},\label{eq:LST-Q-Vj}\\
\tilde{L}^{\left(S_{i}\right)}\left(\boldsymbol{z}\right) & =\widetilde{LB}^{\left(S_{i}\right)}\left(\boldsymbol{z}\right)\frac{1-\widetilde{S}_{i}\left(\lambda-\lambda\widetilde{K}\left(\boldsymbol{z}\right)\right)}{E\left(S_{i}\right)\left(\lambda-\lambda\widetilde{K}\left(\boldsymbol{z}\right)\right)}.\label{eq:LST-Q-Sj}
\end{align}
}Finally, let $\tilde{L}\left(\boldsymbol{z}\right)$ be the PGF
of the joint queue-length distribution at an arbitrary moment. By
conditioning on periods $V_{1},S_{1},\dots,V_{N},S_{N}$ and using
\eqref{eq:LST-Q-Vj} and \eqref{eq:LST-Q-Sj} $\tilde{L}\left(\boldsymbol{z}\right)$
can be written as,
\begin{align}
\tilde{L}\left(\boldsymbol{z}\right) & =\frac{1}{E\left(C\right)}\sum_{i=1}^{N}\left(E\left(V_{i}\right)\tilde{L}^{\left(V_{i}\right)}\left(\boldsymbol{z}\right)+E\left(S_{i}\right)\tilde{L}^{\left(S_{i}\right)}\left(\boldsymbol{z}\right)\right),\label{eq:LST-Q}
\end{align}
with $E\left(V_{i}\right)=\rho_{i}E\left(C\right)$ as the expected
visit time to $Q_{i}$.

The conditioning approach of Equation~\eqref{eq:LST-Q} will also
be used in the next section to determine the batch sojourn-time distribution.
The next theorem will show how \eqref{eq:LST-Q} can be reformulated
and used to find the marginal queue-length distributions.
\begin{thm}
\label{thm:joint-queue-ex}

Let $\tilde{L}\left(\boldsymbol{z}\right)$ be the probability generating
function of the joint queue-length distribution at an arbitrary time
in steady-state. Then, $\tilde{L}\left(\boldsymbol{z}\right)$ can
be written as follows,
\begin{align}
\tilde{L}\left(\boldsymbol{z}\right) & =\sum_{i=1}^{N}\frac{\lambda_{i}\left(1-z_{i}\right)\widetilde{LC}^{\left(B_{i}\right)}\left(\boldsymbol{z}\right)}{\lambda-\lambda\widetilde{K}\left(\boldsymbol{z}\right)}.\label{eq:queue-length-lst-alt}
\end{align}
\end{thm}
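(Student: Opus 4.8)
The plan is to start from the conditioning identity \eqref{eq:LST-Q} and massage it into \eqref{eq:queue-length-lst-alt} by a chain of substitutions that ends in a cyclic telescoping cancellation. Throughout I write $\delta=\lambda-\lambda\widetilde{K}(\boldsymbol{z})$ for the common denominator; all computations are carried out for $\boldsymbol{z}$ with $\delta\neq0$, and the resulting rational identity extends to $\boldsymbol{z}=\boldsymbol{1}$ by continuity since both sides are probability generating functions.

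First I would substitute the arbitrary-epoch PGFs \eqref{eq:LST-Q-Vj} and \eqref{eq:LST-Q-Sj} into \eqref{eq:LST-Q}. Using $E(V_i)=\rho_i E(C)=\lambda_i E(B_i)E(C)$, the prefactor $E(V_i)/E(C)$ cancels the $E(B_i)$ in the denominator of \eqref{eq:LST-Q-Vj}, so the visit-period contribution of $Q_i$ collapses to $\lambda_i\widetilde{LB}^{(B_i)}(\boldsymbol{z})\bigl(1-\widetilde{B}_i(\delta)\bigr)/\delta$, while the switch-over contribution becomes $\widetilde{LB}^{(S_i)}(\boldsymbol{z})\bigl(1-\widetilde{S}_i(\delta)\bigr)/\bigl(E(C)\delta\bigr)$. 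Then I would use \eqref{eq:Eisenberg-2} in the form $\widetilde{B}_i(\delta)\,\widetilde{LB}^{(B_i)}(\boldsymbol{z})=z_i\,\widetilde{LC}^{(B_i)}(\boldsymbol{z})$ to rewrite $\widetilde{LB}^{(B_i)}(\boldsymbol{z})\bigl(1-\widetilde{B}_i(\delta)\bigr)=\widetilde{LB}^{(B_i)}(\boldsymbol{z})-z_i\widetilde{LC}^{(B_i)}(\boldsymbol{z})$, and split this as $\bigl(\widetilde{LB}^{(B_i)}(\boldsymbol{z})-\widetilde{LC}^{(B_i)}(\boldsymbol{z})\bigr)+(1-z_i)\widetilde{LC}^{(B_i)}(\boldsymbol{z})$. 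The second piece contributes precisely $\sum_i\lambda_i(1-z_i)\widetilde{LC}^{(B_i)}(\boldsymbol{z})/\delta$, which is the claimed right-hand side of \eqref{eq:queue-length-lst-alt}.

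It then remains to show that the leftover terms cancel, i.e. that
\[
\sum_{i=1}^N\lambda_i\bigl(\widetilde{LB}^{(B_i)}(\boldsymbol{z})-\widetilde{LC}^{(B_i)}(\boldsymbol{z})\bigr)+\frac{1}{E(C)}\sum_{i=1}^N\widetilde{LB}^{(S_i)}(\boldsymbol{z})\bigl(1-\widetilde{S}_i(\delta)\bigr)=0.
\]
Here I would invoke Eisenberg's relation \eqref{eq:Eisenberg}, which gives $\lambda_i E(C)\bigl(\widetilde{LB}^{(B_i)}(\boldsymbol{z})-\widetilde{LC}^{(B_i)}(\boldsymbol{z})\bigr)=\widetilde{LB}^{(V_i)}(\boldsymbol{z})-\widetilde{LC}^{(V_i)}(\boldsymbol{z})$, together with the laws of motion: \eqref{eq:lm-ex-2} yields $\widetilde{LB}^{(S_i)}(\boldsymbol{z})=\widetilde{LC}^{(V_i)}(\boldsymbol{z})$, and combining \eqref{eq:lm-ex-3} with \eqref{eq:lm-ex-4} gives $\widetilde{LB}^{(S_i)}(\boldsymbol{z})\widetilde{S}_i(\delta)=\widetilde{LC}^{(S_i)}(\boldsymbol{z})=\widetilde{LB}^{(V_{i+1})}(\boldsymbol{z})$, so that $\widetilde{LB}^{(S_i)}(\boldsymbol{z})\bigl(1-\widetilde{S}_i(\delta)\bigr)=\widetilde{LC}^{(V_i)}(\boldsymbol{z})-\widetilde{LB}^{(V_{i+1})}(\boldsymbol{z})$. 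Substituting both into the display, the left-hand side becomes $\frac{1}{E(C)}\sum_{i=1}^N\bigl(\widetilde{LB}^{(V_i)}(\boldsymbol{z})-\widetilde{LB}^{(V_{i+1})}(\boldsymbol{z})\bigr)$, which vanishes because the sum telescopes and the queue indices are read cyclically modulo $N$. This establishes \eqref{eq:queue-length-lst-alt}.

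Most of the work is bookkeeping, and the one genuinely substantive step is recognizing that, after peeling off the target term $\sum_i\lambda_i(1-z_i)\widetilde{LC}^{(B_i)}(\boldsymbol{z})/\delta$, what remains is a cyclic telescoping sum in the visit-beginning PGFs --- something that only becomes visible after the Eisenberg identity and the switch-over laws of motion are applied in the right order. A small point to be careful about is the cancellation of the common factor $\delta$ in the denominators: it is legitimate since every summand carries it, and, as noted, the final identity extends to $\boldsymbol{z}=\boldsymbol{1}$ by continuity.
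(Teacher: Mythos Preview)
Your argument is correct and uses the same ingredients as the paper's proof---the laws of motion, Eisenberg's relation \eqref{eq:Eisenberg}, the service-completion identity \eqref{eq:Eisenberg-2}, and a cyclic telescoping over the queue index---so the two proofs are essentially the same. The only difference is organizational: the paper first rewrites both $\tilde{L}^{(V_i)}$ and $\tilde{L}^{(S_i)}$ entirely in terms of $\widetilde{LC}^{(V_i)}-\widetilde{LB}^{(V_i)}$ via \eqref{eq:lst-lb}, performs the telescoping as a reindexing of the switch-over sum, and only then simplifies back to $\widetilde{LC}^{(B_i)}$, whereas you peel off the target term $\sum_i\lambda_i(1-z_i)\widetilde{LC}^{(B_i)}/\delta$ directly from \eqref{eq:Eisenberg-2} and verify the remainder vanishes---a slightly more transparent ordering of the same steps.
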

\begin{proof}
First, we start by rewriting \eqref{eq:LST-Q-Vj} and \eqref{eq:LST-Q-Sj}.
Equation \eqref{eq:LST-Q-Vj} can be rewritten using \eqref{eq:lst-lb}.
Hence,
\begin{align}
\tilde{L}^{\left(V_{i}\right)}\left(\boldsymbol{z}\right) & =\frac{z_{i}\left(\widetilde{LC}^{\left(V_{i}\right)}\left(\boldsymbol{z}\right)-\widetilde{LB}^{\left(V_{i}\right)}\left(\boldsymbol{z}\right)\right)}{\lambda_{i}E\left(C\right)\left(\widetilde{B}_{i}\left(\lambda-\lambda\widetilde{K}\left(\boldsymbol{z}\right)\right)-z_{i}\right)}\frac{1-\widetilde{B}_{i}\left(\lambda-\lambda\widetilde{K}\left(\boldsymbol{z}\right)\right)}{E\left(B_{i}\right)\left(\lambda-\lambda\widetilde{K}\left(\boldsymbol{z}\right)\right)}.\label{eq:lst-l-vj-alt}
\end{align}
Similarly, \eqref{eq:LST-Q-Sj} can be rewritten using \eqref{eq:lm-ex-2}-\eqref{eq:lm-ex-4},
\begin{align}
\tilde{L}^{\left(S_{i}\right)}\left(\boldsymbol{z}\right) & =\frac{\widetilde{LC}^{\left(V_{i}\right)}\left(\boldsymbol{z}\right)-\widetilde{LB}^{\left(V_{i+1}\right)}\left(\boldsymbol{z}\right)}{E\left(S_{i}\right)\left(\lambda-\lambda\widetilde{K}\left(\boldsymbol{z}\right)\right)}.\label{eq:lst-s-sj-alt}
\end{align}
Substituting \eqref{eq:lst-l-vj-alt} and \eqref{eq:lst-s-sj-alt}
into \eqref{eq:LST-Q} gives
\begin{multline}
\tilde{L}\left(\boldsymbol{z}\right)=\frac{1}{E\left(C\right)}\sum_{i=1}^{N}\left[\frac{z_{i}\left(\widetilde{LC}^{\left(V_{i}\right)}\left(\boldsymbol{z}\right)-\widetilde{LB}^{\left(V_{i}\right)}\left(\boldsymbol{z}\right)\right)}{\widetilde{B}_{i}\left(\lambda-\lambda\widetilde{K}\left(\boldsymbol{z}\right)\right)-z_{i}}\frac{1-\widetilde{B}_{i}\left(\lambda-\lambda\widetilde{K}\left(\boldsymbol{z}\right)\right)}{\lambda-\lambda\widetilde{K}\left(\boldsymbol{z}\right)}\right.\\
\left.+\frac{\widetilde{LC}^{\left(V_{i}\right)}\left(\boldsymbol{z}\right)-\widetilde{LB}^{\left(V_{i+1}\right)}\left(\boldsymbol{z}\right)}{\lambda-\lambda\widetilde{K}\left(\boldsymbol{z}\right)}\right].\label{eq:queue-length-lst}
\end{multline}
Next, \eqref{eq:queue-length-lst} can be rewritten into \eqref{eq:queue-length-lst-alt}
as follows. First, by rearrangement it holds that,
\begin{align*}
\sum_{i=1}^{N}\frac{\widetilde{LC}^{\left(V_{i}\right)}\left(\boldsymbol{z}\right)-\widetilde{LB}^{\left(V_{i+1}\right)}\left(\boldsymbol{z}\right)}{E\left(S_{i}\right)\left(\lambda-\lambda\widetilde{K}\left(\boldsymbol{z}\right)\right)} & =\sum_{i=1}^{N}\frac{\widetilde{LC}^{\left(V_{i}\right)}\left(\boldsymbol{z}\right)-\widetilde{LB}^{\left(V_{i}\right)}\left(\boldsymbol{z}\right)}{E\left(S_{i}\right)\left(\lambda-\lambda\widetilde{K}\left(\boldsymbol{z}\right)\right)}.
\end{align*}
Then, using \eqref{eq:queue-length-lst}, \eqref{eq:lst-l-vj-alt},
and \eqref{eq:lst-s-sj-alt},
\begin{align*}
 & \sum_{i=1}^{N}\left[\frac{z_{i}\left(\widetilde{LC}^{\left(V_{i}\right)}\left(\boldsymbol{z}\right)-\widetilde{LB}^{\left(V_{i}\right)}\left(\boldsymbol{z}\right)\right)}{\widetilde{B}_{i}\left(\lambda-\lambda\widetilde{K}\left(\boldsymbol{z}\right)\right)-z_{i}}\frac{1-\widetilde{B}_{i}\left(\lambda-\lambda\widetilde{K}\left(\boldsymbol{z}\right)\right)}{\lambda-\lambda\widetilde{K}\left(\boldsymbol{z}\right)}+\frac{\widetilde{LC}^{\left(V_{i}\right)}\left(\boldsymbol{z}\right)-\widetilde{LB}^{\left(V_{i+1}\right)}\left(\boldsymbol{z}\right)}{\lambda-\lambda\widetilde{K}\left(\boldsymbol{z}\right)}\right]\\
 & =\sum_{i=1}^{N}\left(1+\frac{z_{i}\left(1-\widetilde{B}_{i}\left(\lambda-\lambda\widetilde{K}\left(\boldsymbol{z}\right)\right)\right)}{\widetilde{B}_{i}\left(\lambda-\lambda\widetilde{K}\left(\boldsymbol{z}\right)\right)-z_{i}}\right)\frac{\left(\widetilde{LC}^{\left(V_{i}\right)}\left(\boldsymbol{z}\right)-\widetilde{LB}^{\left(V_{i}\right)}\left(\boldsymbol{z}\right)\right)}{\lambda-\lambda\widetilde{K}\left(\boldsymbol{z}\right)}\\
 & =\sum_{i=1}^{N}\frac{\left(1-z_{i}\right)\widetilde{B}_{i}\left(\lambda-\lambda\widetilde{K}\left(\boldsymbol{z}\right)\right)}{\widetilde{B}_{i}\left(\lambda-\lambda\widetilde{K}\left(\boldsymbol{z}\right)\right)-z_{i}}\frac{\left(\widetilde{LC}^{\left(V_{i}\right)}\left(\boldsymbol{z}\right)-\widetilde{LB}^{\left(V_{i}\right)}\left(\boldsymbol{z}\right)\right)}{\lambda-\lambda\widetilde{K}\left(\boldsymbol{z}\right)}\\
 & =\sum_{i=1}^{N}\frac{\left(1-z_{i}\right)\lambda_{i}E\left(C\right)\widetilde{LC}^{\left(B_{i}\right)}\left(\boldsymbol{z}\right)}{\lambda-\lambda\widetilde{K}\left(\boldsymbol{z}\right)},
\end{align*}
and multiplying with $1/E\left(C\right)$ gives \eqref{eq:queue-length-lst-alt}.
\end{proof}
\begin{rem}
\cite{Boxma2011} derived $\tilde{L}\left(\boldsymbol{z}\right)$
for a polling system with individually arriving customers. In case
of individually arriving customers, $\lambda-\lambda\widetilde{K}\left(\boldsymbol{z}\right)$
reduces to $\sum_{i=1}^{N}\lambda_{i}\left(1-z_{i}\right)$ in \eqref{eq:queue-length-lst-alt},
which corresponds with Equation~(10) in \cite{Boxma2011}. 
\end{rem}

\begin{rem}
\label{rem:l-marg}From \ref{thm:joint-queue-ex} the marginal queue-length
distributions $\widetilde{L}_{i}\left(z\right)$ immediately follows
by setting $z_{i}=z$ and $z_{j}=1$, for $j\neq i$. Then, from \eqref{eq:queue-length-lst-alt},
\begin{align}
\tilde{L}_{i}\left(z\right) & =\frac{\lambda_{i}\left(1-z\right)\widetilde{LC}_{i}^{\left(B_{i}\right)}\left(z\right)}{\lambda-\lambda\widetilde{K}_{i}\left(z\right)}\nonumber \\
 & =\frac{E\left(K_{i}\right)\left(1-z\right)}{1-\widetilde{K}_{i}\left(z\right)}\widetilde{LC}_{i}^{\left(B_{i}\right)}\left(z\right).\label{eq:lst-marg-ex}
\end{align}
where $\widetilde{LC}_{i}^{\left(B_{i}\right)}\left(z\right)=\widetilde{LC}^{\left(B_{i}\right)}\left(1,\dots,1,z,1,\dots,1\right)$,
where the $z$ occurs at the $i$-th entry. 
\end{rem}

\begin{rem}
When $N=1$, the system reduces to a $M^{X}/G/1$ queueing system
with multiple vacations \cite{Baba1986}. Batches of customers arrive
at the system according to a compound Poisson process. As soon as
the system becomes empty, the server takes an uninterruptible vacation
(switch-over time) for a random length of time. After returning from
that vacation, the server keeps on taking vacations until there is
at least one customer in the system. With use of \eqref{eq:Eisenberg-2},
\eqref{eq:lst-lb}, and \eqref{eq:queue-length-lst-alt} it is easy
to determine the PGF of the stationary queue size distribution of
the $M^{X}/G/1$ multiple vacation model, 
\begin{align}
\tilde{L}\left(z\right) & =\frac{\lambda E\left(K\right)\left(1-z\right)\widetilde{LC}^{\left(B\right)}\left(z\right)}{\lambda-\lambda\widetilde{K}\left(z\right)}\nonumber \\
 & =\frac{\left(1-\rho\right)\left(1-z\right)\widetilde{B}\left(\lambda-\lambda\widetilde{K}\left(z\right)\right)\left(\widetilde{LC}^{\left(V\right)}\left(z\right)-\widetilde{LB}^{\left(V\right)}\left(z\right)\right)}{\left(\lambda-\lambda\widetilde{K}\left(z\right)\right)E\left(S\right)\left(\widetilde{B}\left(\lambda-\lambda\widetilde{K}\left(z\right)\right)-z\right)}\nonumber \\
 & =\left[\frac{\left(1-\rho\right)\left(1-z\right)\widetilde{B}\left(\lambda-\lambda\widetilde{K}\left(z\right)\right)}{\widetilde{B}\left(\lambda-\lambda\widetilde{K}\left(z\right)\right)-z}\right]\left[\frac{1-\widetilde{S}\left(\lambda-\lambda\widetilde{K}\left(z\right)\right)}{E\left(S\right)\left(\lambda-\lambda\widetilde{K}\left(z\right)\right)}\right],\label{eq:pgf-mx/g/1}
\end{align}
where we use the fact that $\widetilde{LC}^{\left(V\right)}\left(z\right)=1$,
since the server only goes on a vacation if the queue is empty. Equation~\eqref{eq:pgf-mx/g/1}
can be interpreted as follows. The first term is the PGF of the stationary
queue-length distribution of the standard $M^{X}/G/1$ queue without
vacations, whereas the second term is the PGF of the number of customers
that arrive during the residual duration of the vacation time \cite{Choudhury2002}. 
\end{rem}

\subsection{Batch sojourn-time distribution\label{subsec:Batch-sojourn-distribution-ex}}

In order to determine the LST of the steady-state batch sojourn-time
distribution, we follow the method of \cite{Boon2012} by conditioning
on the location of the server and determining the time it takes until
the last customer in a specific batch is served. These results are
then used to determine the batch sojourn-time distribution of an arbitrary
batch. \cite{Boon2012} developed this method to study the steady-state
waiting time distribution for polling systems with rerouting. For
these kinds of models, the distributional form of Little\textquoteright s
Law \cite{Keilson1988} cannot be applied, since the combined processes
of internal and external arrivals do not necessary form a Poisson
process. However, by studying the evolution of the system after a
customer arrival this problem can be avoided and the waiting time
distribution can be obtained. Important in their analysis is the concept
of \emph{descendants} from the theory of branching processes, which
is defined as all the customers who arrive during the service of a
tagged customer, plus the customers who arrive during the service
of those customers, etc (i.e. the total progeny of the tagged customer). 

The approach of \cite{Boon2012} is very suited to determine the steady-state
batch sojourn-time distribution, since for a specific customer batch
the location where the last customer in the batch will be served varies
on the location of the server at the arrival of the batch (e.g. in
\ref{fig:The-batch-sojourn} depending on the location of the server
the batch is either fully served in $Q_{1}$ or $Q_{i}$). Similar
as in \eqref{eq:LST-Q} we explicitly condition on the location on
the server; the LST of the batch sojourn-time distribution of a specific
customer batch~$\boldsymbol{k}$ can be written as, 
\begin{align}
\widetilde{T}_{\boldsymbol{k}}\left(\omega\right) & =\frac{1}{E\left(C\right)}\sum_{j=1}^{N}\left(E\left(V_{j}\right)\widetilde{T}_{\boldsymbol{k}}^{\left(V_{j}\right)}\left(\omega\right)+E\left(S_{j}\right)\widetilde{T}_{\boldsymbol{k}}^{\left(S_{j}\right)}\left(\omega\right)\right),\label{eq:batch-sojourn-ex-specific}
\end{align}
where $\widetilde{T}_{\boldsymbol{k}}^{\left(V_{j}\right)}\left(.\right)$
is the LST of the batch sojourn-time for customer batch~$\boldsymbol{k}$
\emph{given} that the batch arrived during $V_{j}$, and whereas $\widetilde{T}_{\boldsymbol{k}}^{\left(S_{j}\right)}\left(.\right)$
is \emph{given} when the customer batch arrived during $S_{j}$. The
remainder of this section will focus on how to determine $\widetilde{T}_{\boldsymbol{k}}^{\left(V_{j}\right)}\left(.\right)$,
$\widetilde{T}_{\boldsymbol{k}}^{\left(S_{j}\right)}\left(.\right)$,
and the LST of an arbitrary batch $\widetilde{T}\left(.\right)$.

From the theory of branching processes, we denote $B_{j,i}$ $i,j=1,\dots,N$,
as the service of a tagged customer in $Q_{j}$ plus all its decedents
that will be served before or during the next visit to $Q_{i}$. Combining
this gives the following recursive function,
\begin{align}
B_{j,i} & =\begin{cases}
BP_{j}, & \mbox{if }i=j,\\
BP_{j}+\sideset{}{'}\sum_{l=j+1}^{i}\limits{\displaystyle \sum_{m=1}^{N_{l}\left(BP_{j}\right)}\limits B_{l_{m},i}}, & \mbox{otherwise},
\end{cases}
\end{align}
where $BP_{j}$ is the busy period initiated by the tagged customer
in $Q_{j}$, $N_{l}\left(BP_{j}\right)$ denotes the number of customers
that arrive in $Q_{l}$ during this busy-period in $Q_{j}$, and $B_{l_{m},i}$
is a sequence of (independent) of $B_{l,i}$'s. Let $\widetilde{B}_{j,i}\left(.\right)$
be the LST of $B_{j,i}$, which is given by,
\begin{align}
\widetilde{B}_{j,i}\left(\omega\right)= & \widetilde{BP}_{j}\left(\omega+\lambda(1-\widetilde{K}(\boldsymbol{B_{j+1,i}}))\right),
\end{align}
where $\boldsymbol{B_{j+1,i}}$ is an $N$-dimensional vector defined
as follows,
\begin{align}
(\boldsymbol{B_{j,i}})_{l} & =\begin{cases}
\widetilde{B}_{l,i}\left(\omega\right), & \mbox{if }l=j,\dots,i,\,\mbox{and }j\neq i+1,\\
1, & \mbox{otherwise}.
\end{cases}\label{eq:decend-org}
\end{align}
A similar LST can also be formulated for a switch-over time $S_{j}$
and the service of all its decedents that will be served before the
end of the visit to $S_{i}$,
\begin{align}
\widetilde{S}_{j,i}\left(\omega\right) & =\widetilde{S}_{j}\left(\omega+\lambda(1-\widetilde{K}(\boldsymbol{B_{j+1,i}})\right),
\end{align}
Finally, let $\boldsymbol{B_{j,i}^{*}}$ be an $N$-dimensional vector
defined as, 
\begin{align}
(\boldsymbol{B_{j,i}^{*}})_{l} & =\begin{cases}
\widetilde{B}_{i}\left(\omega\right), & \mbox{if }l=i,\\
(\boldsymbol{B_{j,i-1}})_{l}, & \mbox{otherwise}.
\end{cases}\label{eq:decend-alt}
\end{align}
The key difference with \eqref{eq:decend-org} is that \eqref{eq:decend-alt}
excludes any new customer arrivals in $Q_{i}$. This is needed to
omit customers that arrive in $Q_{i}$ after the batch arrival; these
customers do not influence the batch sojourn-time of the arriving
customer batch since they will be served afterwards.

We first focus on the batch sojourn-time of a customer batch that
arrives during a visit period. Assume than an arriving customer batch~$\boldsymbol{k}$
enters the system while the server is currently within visit period
$V_{j}$ and the last customer in the batch will be served in $Q_{i}$.
Formally, this means $k_{i}>0$ and all the other customer arriving
in the same batch should be served before the next visit to $Q_{i}$;
$k_{l}\geq0$, $l=j,\dots,i-1$, and $k_{l}=0$ elsewhere. Whenever
all the customers arrive in the same queue that is currently visited;
then $k_{i}=k_{j}>0$, and $k_{l}=0$ elsewhere. 

The batch sojourn-time of customer batch~$\boldsymbol{k}$ consists
of the (i) residual service time in $Q_{j}$, (ii) the service of
all the customers already in the system in $Q_{j},\dots,Q_{i}$, (iii)
the service of all new customer arrivals that arrive after customer
batch~$\boldsymbol{k}$ in $Q_{j},\dots,Q_{i-1}$ before the server
reaches $Q_{i}$, (iv) switch-over times $S_{j},\dots,S_{i-1}$, and
(v) the service of the customers in the customer batch~$\boldsymbol{k}$.
From \eqref{eq:LST-Q-Vj} we know that at the arrival of the customer
batch, the PGF of the joint queue-length distribution is the equal
to the queue lengths at a service beginning, $\widetilde{LB}^{\left(B_{j}\right)}\left(.\right)$,
plus the number of customers that arrived in the past part of the
service time, $\widetilde{B}_{j}^{P}\left(.\right)$. On the other
hand, we also need to consider the residual part of the service time,
$\widetilde{B}_{j}^{R}\left(.\right)$, and if $i\neq j$ the arrivals
that occur in $Q_{j},\dots,Q_{i-1}$ during this period as well. Therefore
similar as in \cite{Boon2012}, we need to consider the PGF-LST of
the joint queue-length distribution at an arrival epoch \emph{and}
the residual service time; $\widetilde{L}^{\left(V_{i}\right)}\left(\boldsymbol{z},\omega\right)$.
First, since the number of customers that arrive in the past and residual
part of the service time are independent of each other and from the
queue lengths at a service beginning, we can write the LST of the
joint distribution of $\widetilde{B}_{j}^{P}\left(.\right)$ and $\widetilde{B}_{j}^{R}\left(.\right)$
as \cite{Cohen1982}
\begin{align*}
\widetilde{B}_{j}^{PR}\left(\omega_{P},\omega_{R}\right) & =\frac{\widetilde{B}_{j}\left(\omega_{P}\right)-\widetilde{B}_{j}\left(\omega_{R}\right)}{E\left(B_{j}\right)\left(\omega_{R}-\omega_{P}\right)},
\end{align*}
Then because of independence between $\widetilde{B}_{j}^{PR}\left(\omega_{P},\omega_{R}\right)$
and $\widetilde{LB}^{\left(B_{j}\right)}\left(\boldsymbol{z}\right)$,
we have 
\begin{align}
\widetilde{L}^{\left(V_{j}\right)}\left(\boldsymbol{z},\omega\right) & =\widetilde{LB}^{\left(B_{j}\right)}\left(\boldsymbol{z}\right)\widetilde{B}_{j}^{PR}\left(\lambda-\lambda K\left(\boldsymbol{z}\right),\omega\right).\label{eq:lvjzw-ex}
\end{align}

\begin{prop}
\label{prop:lst-visit-ex}The LST of the batch sojourn-time distribution
of batch~$\boldsymbol{k}$ conditioned that the server is in visit
period $V_{j}$ and the last customer in the batch will be served
in $Q_{i}$ is given by,
\begin{multline}
\widetilde{T}_{\boldsymbol{k}}^{\left(V_{j}\right)}\left(\omega\right)=\widetilde{L}^{\left(V_{j}\right)}\left(\boldsymbol{B_{j,i}^{*}},\,\omega+\lambda(1-\widetilde{K}(\boldsymbol{B_{j,i-1}}))\right)\sideset{}{'}\prod_{l=1}^{i-j}\widetilde{S}_{j+l-1,i-1}\left(\omega\right)\frac{1}{(\boldsymbol{B_{j,i}^{*}})_{j}}\sideset{}{'}\prod_{l=j}^{i}\limits(\boldsymbol{B_{j,i}^{*}})_{l}^{k_{l}}.\label{eq:lst-ex-visit}
\end{multline}
\end{prop}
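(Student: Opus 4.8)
The plan is to track, term by term, the five contributions listed just before the proposition and translate each into a factor of a transform, using the decomposition of the queue-length-plus-residual-service transform $\widetilde{L}^{\left(V_{j}\right)}\left(\boldsymbol{z},\omega\right)$ from \eqref{eq:lvjzw-ex}. I would start by conditioning on the event that the server is in $V_{j}$ at the batch arrival and that the last customer of the batch is served in $Q_{i}$, so that $\boldsymbol{k}\in\mathcal{K}_{j,i}$. At that arrival epoch the joint queue length is distributed according to $\widetilde{L}^{\left(V_{j}\right)}\left(\cdot,\cdot\right)$, i.e.\ the queue length at the last service beginning plus the customers who arrived during the elapsed part $\widetilde{B}_{j}^{P}$ of the current service, while the residual service $\widetilde{B}_{j}^{R}$ is the second argument $\omega$. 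The batch sojourn-time is then the residual service of that customer, followed by the service of everybody (old customers, plus the tagged batch, plus all descendants) who must be cleared before the server leaves $Q_{i}$, plus the switch-over times $S_{j},\dots,S_{i-1}$ and their descendants.

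The key observation is that each ``old'' customer present in $Q_{l}$ ($l=j,\dots,i$) at the arrival epoch generates, from the perspective of the batch sojourn-time, a delay of $B_{l,i}$ (its busy period plus all progeny that must be served before the next visit to $Q_{i}$), whose LST is $\widetilde{B}_{l,i}(\omega)$; this is exactly why the vector $\boldsymbol{B_{j,i}}$ was introduced in \eqref{eq:decend-org}. So summing over old customers in $Q_{j},\dots,Q_{i}$ amounts to evaluating the joint queue-length PGF at the argument $\boldsymbol{B_{j,i}}$ in coordinates $j,\dots,i$ and at $1$ elsewhere. There is one correction: customers that arrive in $Q_{i}$ \emph{after} the tagged batch do not count, since they are served after the batch completes in $Q_{i}$; this is precisely the reason the starred vector $\boldsymbol{B_{j,i}^{*}}$ of \eqref{eq:decend-alt} replaces the $Q_{i}$-coordinate $\widetilde{B}_{i,i}(\omega)=\widetilde{BP}_{i}(\cdots)$ by the bare service $\widetilde{B}_{i}(\omega)$ — old customers in $Q_{i}$ still contribute their own service, but their descendants arriving in $Q_{i}$ do not. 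I would argue this by noting that when the server reaches $Q_{i}$ it serves the $Q_{i}$-customers present (old ones, plus the batch, plus whatever arrived in $Q_{i}$ during the journey) but the batch's completion coincides with service of the \emph{last} batch customer, and under exhaustive discipline the relevant count is: old $Q_{i}$-customers get a plain $B_{i}$ each, the journey-arrivals to $Q_{i}$ get the plain $\widetilde{B}_i$ via the $\boldsymbol{B_{j,i-1}}$ bookkeeping carried through the switch-overs, and post-batch $Q_i$-arrivals are excluded.

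Next I would assemble the pieces. The residual service time in $Q_{j}$ together with the journey-arrivals it induces in $Q_{j},\dots,Q_{i-1}$ is captured by evaluating the second ($\omega$-)argument of $\widetilde{L}^{\left(V_{j}\right)}$ at $\omega+\lambda\bigl(1-\widetilde{K}(\boldsymbol{B_{j,i-1}})\bigr)$: the $\lambda(1-\widetilde{K}(\cdot))$ shift is the standard device for ``customers arriving during an exponentially-transformed period, each replaced by its descendant-delay''. The switch-over times $S_{j},\dots,S_{i-1}$, each with its own descendants that must clear before $Q_{i}$, give the cyclic product $\sideset{}{'}\prod_{l=1}^{i-j}\widetilde{S}_{j+l-1,i-1}(\omega)$. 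Finally, the tagged batch itself contributes the factor $\sideset{}{'}\prod_{l=j}^{i}(\boldsymbol{B_{j,i}^{*}})_{l}^{k_{l}}$, one descendant-delay factor per batch customer; since $\widetilde{L}^{\left(V_{j}\right)}$ evaluated at $\boldsymbol{B_{j,i}^{*}}$ already contains the $Q_j$-coordinate to a power that, in the conditioning, includes the tagged batch's own $Q_j$-customers plus a spurious count, the correction $1/(\boldsymbol{B_{j,i}^{*}})_{j}$ removes the one extra factor of the elapsed-service term's contribution at $Q_j$ (equivalently, it accounts for the fact that the tagged customer initiating the observation is not double-counted). Multiplying the four groups of factors yields exactly \eqref{eq:lst-ex-visit}.

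The main obstacle I expect is the bookkeeping in step (iii)/(v) — getting the $Q_i$-coordinate exactly right and justifying the single factor $1/(\boldsymbol{B_{j,i}^{*}})_{j}$. The subtlety is that $\widetilde{L}^{\left(V_{j}\right)}\left(\boldsymbol{z},\omega\right)=\widetilde{LB}^{\left(B_{j}\right)}\left(\boldsymbol{z}\right)\widetilde{B}_{j}^{PR}\bigl(\lambda-\lambda\widetilde K(\boldsymbol z),\omega\bigr)$ is written from the viewpoint of a random service beginning in $Q_j$, so one must carefully separate "the customer in service" (whose residual is the observed $\omega$-clock, not a $BP_j$) from "the other old customers in $Q_j$" (each a full $B_{j,i}$), and check that the tagged batch's $k_j$ customers in $Q_j$ enter only through the explicit product $\sideset{}{'}\prod_{l=j}^{i}(\boldsymbol{B_{j,i}^{*}})_{l}^{k_{l}}$ and not also through the $\widetilde{LB}^{\left(B_{j}\right)}$ factor — PASTA plus the independence of the batch from the past makes the joint queue length at the arrival epoch genuinely given by $\widetilde L^{(V_j)}$, but one must verify that conditioning on $\boldsymbol k$ does not perturb it. Once that separation is argued cleanly, the remaining identification of shifts and products is routine term-matching against the five-part decomposition.
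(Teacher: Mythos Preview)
Your approach is essentially the paper's: condition on the queue lengths $(n_1,\dots,n_N)$ at the arrival epoch, write the conditional LST as a product of descendant-delay factors $(\boldsymbol{B_{j,i}^{*}})_l^{n_l+k_l}$, the residual-service factor $\widetilde{B}_j^R(\omega+\lambda(1-\widetilde{K}(\boldsymbol{B_{j,i-1}})))$, and the switch-over factors $\widetilde{S}_{l,i-1}(\omega)$, then uncondition via $\widetilde{L}^{(V_j)}(\boldsymbol{z},\omega)$. The paper does exactly this, writing the conditional transform with exponent $n_j-1+k_j$ at coordinate $j$ (and $n_l+k_l$ at $l=j+1,\dots,i$), which makes the $1/(\boldsymbol{B_{j,i}^{*}})_j$ factor transparent.

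One clarification on that factor: your first explanation (``includes the tagged batch's own $Q_j$-customers plus a spurious count'') is not right --- the batch's customers are not in $\widetilde{L}^{(V_j)}$ at all, since that PGF describes the queue \emph{before} the batch arrives. The correct reason, which you do give in the parenthetical and again in your obstacle paragraph, is that $\widetilde{L}^{(V_j)}(\boldsymbol{z},\omega)$ counts the customer currently in service among the $n_j$ in $Q_j$, so evaluating at $z_j=(\boldsymbol{B_{j,i}^{*}})_j$ assigns that customer a full descendant-delay $\widetilde{B}_{j,i-1}(\omega)$; but its actual contribution is only the residual service (already captured by the second argument), hence one division by $(\boldsymbol{B_{j,i}^{*}})_j$. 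Drop the muddled sentence and keep the parenthetical as the explanation.
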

\begin{proof}
Consider the system just before the arrival of the customer batch
and assume that the batch does not finish service in the current visit
period, i.e. $i\neq j$. Then, let $n_{1},n_{2},\dots,n_{N}$ be the
number of customers present in the system at the arrival epoch of
the customer batch and $k_{1},\dots,k_{N}$ be the number of customers
per queue that arrived in batch~$\boldsymbol{k}$. Since the batch
arrives in $V_{j}$, it first has to wait for the residual service
time of the customer currently in service. During this period, new
customers can arrive before the next visit to $Q_{i}$ with $\lambda(1-\widetilde{K}(\boldsymbol{B_{j,i-1}}))$.
Afterwards, each customer already in the system at the arrival of
the customer batch in $Q_{j},\dots,Q_{i}$ and each customer in batch~$\boldsymbol{k}$
will make a contribution of $(\boldsymbol{B_{j,i}^{*}})_{l}$, $l=j,\dots,i$
to the batch sojourn-time. Finally, in the switch-over periods between
$Q_{j}$ and $Q_{i}$, new customers can arrive that will be served
before the service of the last customer in the batch. Combining this,
gives the LST the batch sojourn-time distribution of batch~$\boldsymbol{k}$
conditioned that $n_{1},n_{2},\dots,n_{N}$ customers are already
present in the system, the server is in visit period $V_{j}$, and
the last customer in the batch will be served in $Q_{i}$,
\begin{multline}
E(e^{-\omega T_{\boldsymbol{k}}^{\left(V_{j}\right)}}|n_{1},n_{2},\dots,n_{N})=\widetilde{B}_{j}^{R}\left(\omega+\lambda(1-\widetilde{K}(\boldsymbol{B_{j,i-1}}))\right)\widetilde{B}_{j,i-1}\left(\omega\right)^{n_{j}-1+k_{j}}\\
\times\sideset{}{'}\prod_{l=j+1}^{i-1}\widetilde{B}_{l,i-1}\left(\omega\right)^{n_{l}+k_{l}}\sideset{}{'}\prod_{l=j}^{i-1}\widetilde{S}_{l,i-1}\left(\omega\right)\widetilde{B}_{i}\left(\omega\right)^{n_{i}+k_{i}}.
\end{multline}
Unconditioning this equation gives \eqref{eq:lst-ex-visit}.
\end{proof}
Now, consider a customer batch that arrives during a switch-over period.
Assume than an arriving customer batch~$\boldsymbol{k}$ enters the
system while the server is currently within switch-over period $S_{j-1}$
and the last customer in the batch will be served in $Q_{i}$. The
reason that we consider $S_{j-1}$ is that batch~$\boldsymbol{k}$
will finish service in the same queue had it arrived in $V_{j}$ because
of the exhaustive service discipline. 

In this case, the batch sojourn-time consists of the same components
(ii), (iii), (iv), and (v). Component (i) is however different and
is now defined as the residual switch-over time between $Q_{j-1}$
and $Q_{j}$. Similarly, we define $\widetilde{L}^{\left(S_{j-1}\right)}\left(\boldsymbol{z},\omega\right)$
as the PGF-LST of the joint queue-length distribution of customers
present in the system at an arbitrary moment during $S_{j-1}$ \emph{and}
the residual switch-over time $\widetilde{S}_{j-1}^{R}\left(.\right)$.
From \eqref{eq:LST-Q-Sj} we have the joint queue-length distribution
at a switch-over beginning, $\widetilde{LB}^{\left(S_{j-1}\right)}\left(.\right)$,
and the number of customers that arrived in the past part of the switch-over
time, $\widetilde{S}_{j-1}^{P}\left(.\right)$. Similar to $\widetilde{B}_{j}^{PR}\left(.\right)$,
we define $\widetilde{S}_{j-1}^{PR}\left(\omega_{R},\omega_{P}\right)$
as the LST of the joint distribution of the past and residual switch-over
time $S_{j-1}$ as
\begin{align*}
\widetilde{S}_{j-1}^{PR}\left(\omega_{P},\omega_{R}\right) & =\frac{\widetilde{S}_{j-1}\left(\omega_{P}\right)-\widetilde{S}_{j-1}\left(\omega_{R}\right)}{E\left(S_{j-1}\right)\left(\omega_{R}-\omega_{P}\right)}.
\end{align*}
Then due to independence, the PGF-LST of the joint queue-length distribution
present at an arbitrary moment during $S_{j-1}$ and the residual
switch-over time is given by, 
\begin{align}
\widetilde{L}^{\left(S_{j-1}\right)}\left(\boldsymbol{z},\omega\right) & =\widetilde{LB}^{\left(S_{j-1}\right)}\left(\boldsymbol{z}\right)\widetilde{S}_{j-1}^{PR}\left(\lambda-\lambda\widetilde{K}\left(\boldsymbol{z}\right),\omega\right).\label{eq:lsjzw-ex}
\end{align}

\begin{prop}
\label{prop:lst-visit-ex-1}The LST of the batch sojourn-time distribution
of batch~$\boldsymbol{k}$ conditioned that the server is in switch-over
period $S_{j-1}$ and the last customer in the batch will be served
in $Q_{i}$\textbf{ }is given by, 
\begin{align}
\widetilde{T}_{\boldsymbol{k}}^{\left(S_{j-1}\right)}\left(\omega\right) & =\widetilde{L}^{\left(S_{j-1}\right)}\left(\boldsymbol{B_{j,i}^{*}},\,\omega+\lambda(1-\widetilde{K}(\boldsymbol{B_{j,i-1}}))\right)\sideset{}{'}\prod_{l=1}^{i-j}\widetilde{S}_{j+l-1,i-1}\left(\omega\right)\sideset{}{'}\prod_{l=j}^{i}(\boldsymbol{B_{j,i}^{*}})_{l}^{k_{l}}.\label{eq:lst-ex-switch}
\end{align}
\end{prop}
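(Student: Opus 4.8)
The plan is to adapt the conditioning argument used for Proposition~\ref{prop:lst-visit-ex} to the situation in which the server is in the middle of the switch-over $S_{j-1}$ rather than of a visit to $Q_{j}$. Two things change. First, the residual part of the current period is now a residual switch-over time, so the joint queue-length/residual PGF-LST $\widetilde{L}^{\left(S_{j-1}\right)}\left(\boldsymbol{z},\omega\right)$ of \eqref{eq:lsjzw-ex} plays the role that $\widetilde{L}^{\left(V_{j}\right)}\left(\boldsymbol{z},\omega\right)$ played in the visit-period case. Second, during a switch-over no customer is being served, so the exponent $n_{j}-1+k_{j}$ attached to $Q_{j}$ in the visit-period case becomes $n_{j}+k_{j}$; this is exactly why the factor $1/(\boldsymbol{B_{j,i}^{*}})_{j}$ appearing in \eqref{eq:lst-ex-visit} is absent from \eqref{eq:lst-ex-switch}. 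Conditioning on an arrival in $S_{j-1}$, the switch-over immediately preceding $V_{j}$, rather than in $S_{j}$ is what forces the last batch customer into the same queue $Q_{i}$ as for an arrival in $V_{j}$, so the admissible ranges of $i$, $j$ and of the coordinates of $\boldsymbol{k}$ coincide with those in Proposition~\ref{prop:lst-visit-ex}.

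First I would condition on $n_{1},\dots,n_{N}$, the numbers of customers present in the queues just before the batch arrives, and on the event that the last batch customer is served in $Q_{i}$ (so $k_{i}>0$, $k_{l}\geq 0$ for $l=j,\dots,i-1$, and $k_{l}=0$ otherwise). Take $i\neq j$ for the moment. The batch sojourn-time then splits into: the residual switch-over time $S_{j-1}$, during which arrivals in $Q_{j},\dots,Q_{i-1}$ generate extra work to be served before the server reaches $Q_{i}$ whereas arrivals in $Q_{i}$ (placed behind the batch) do not, which contributes $\widetilde{S}_{j-1}^{R}\left(\omega+\lambda(1-\widetilde{K}(\boldsymbol{B_{j,i-1}}))\right)$; the work produced by each of the $n_{l}+k_{l}$ customers residing in $Q_{l}$ for $l=j,\dots,i-1$, namely a $Q_{l}$ busy period together with its descendants served before $Q_{i}$, i.e.\ $\widetilde{B}_{l,i-1}\left(\omega\right)^{n_{l}+k_{l}}$; the switch-over periods $S_{j},\dots,S_{i-1}$ with the arrivals they generate before $Q_{i}$, i.e.\ $\sideset{}{'}\prod_{l=j}^{i-1}\widetilde{S}_{l,i-1}\left(\omega\right)$; and the plain service times $\widetilde{B}_{i}\left(\omega\right)^{n_{i}+k_{i}}$ of the customers in $Q_{i}$ that precede and include the last batch customer. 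This yields
\begin{multline*}
E(e^{-\omega T_{\boldsymbol{k}}^{\left(S_{j-1}\right)}}|n_{1},\dots,n_{N})=\widetilde{S}_{j-1}^{R}\left(\omega+\lambda(1-\widetilde{K}(\boldsymbol{B_{j,i-1}}))\right)\\
\times\sideset{}{'}\prod_{l=j}^{i-1}\widetilde{B}_{l,i-1}\left(\omega\right)^{n_{l}+k_{l}}\;\sideset{}{'}\prod_{l=j}^{i-1}\widetilde{S}_{l,i-1}\left(\omega\right)\;\widetilde{B}_{i}\left(\omega\right)^{n_{i}+k_{i}}.
\end{multline*}

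To finish, I would uncondition over $(n_{1},\dots,n_{N})$ jointly with the residual switch-over time. The right-hand side above is a product of powers $z_{l}^{n_{l}}$ with $z_{l}=(\boldsymbol{B_{j,i}^{*}})_{l}$ (recalling $(\boldsymbol{B_{j,i}^{*}})_{l}=\widetilde{B}_{l,i-1}\left(\omega\right)$ for $l\neq i$ and $(\boldsymbol{B_{j,i}^{*}})_{i}=\widetilde{B}_{i}\left(\omega\right)$), times a residual-switch-over LST evaluated at $\omega+\lambda(1-\widetilde{K}(\boldsymbol{B_{j,i-1}}))$, times factors independent of the $n_{l}$. Since every customer counted by the $n_{l}$ arrived before the batch, each of them is treated exactly like the customers already ahead of the batch, so the joint PGF-LST \eqref{eq:lsjzw-ex} applies directly with $\boldsymbol{z}=\boldsymbol{B_{j,i}^{*}}$ and residual argument $\omega+\lambda(1-\widetilde{K}(\boldsymbol{B_{j,i-1}}))$. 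Collecting the $k_{l}$-factors into $\sideset{}{'}\prod_{l=j}^{i}(\boldsymbol{B_{j,i}^{*}})_{l}^{k_{l}}$ and re-indexing the switch-over product then gives \eqref{eq:lst-ex-switch}. The boundary case $i=j$, in which the batch is served entirely in the next visit $V_{j}$, is handled in the same way and is even simpler: $\boldsymbol{B_{j,j-1}}=\boldsymbol{1}$, there are no intermediate switch-overs, and the conditional LST collapses to $\widetilde{S}_{j-1}^{R}\left(\omega\right)\widetilde{B}_{j}\left(\omega\right)^{n_{j}+k_{j}}$, which unconditions to the stated expression as well.

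The only real difficulty here is bookkeeping rather than analysis: one has to keep consistent track of customers and arrivals occurring \emph{before} the batch --- which carry the full $\widetilde{B}_{l,i-1}$ busy-period-plus-descendants weight when $l<i$ and the bare $\widetilde{B}_{i}$ weight when $l=i$ --- versus those occurring \emph{after} the batch, which contribute only when they land in $Q_{j},\dots,Q_{i-1}$ and never when they land in $Q_{i}$; and one must check that dropping the customer in service in $Q_{j}$ is precisely what removes the $1/(\boldsymbol{B_{j,i}^{*}})_{j}$ factor relative to \eqref{eq:lst-ex-visit}.
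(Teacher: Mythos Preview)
Your proposal is correct and follows essentially the same approach as the paper: you condition on the queue-lengths $n_{1},\dots,n_{N}$ at the arrival epoch, write down the same conditional LST decomposition (residual switch-over, busy-period-plus-descendant contributions for $Q_{j},\dots,Q_{i-1}$, intermediate switch-overs, bare services at $Q_{i}$), and then uncondition via $\widetilde{L}^{(S_{j-1})}(\boldsymbol{z},\omega)$. Your explicit remarks on why the factor $1/(\boldsymbol{B_{j,i}^{*}})_{j}$ disappears and on the $i=j$ boundary case are a bit more detailed than the paper's terse ``Unconditioning this equation gives \eqref{eq:lst-ex-switch},'' but the argument is the same.
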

\begin{proof}
Similar as in \ref{prop:lst-visit-ex}, let $n_{1},n_{2},\dots,n_{N}$
be the number of customers present in the system at the arrival epoch,
$k_{1},\dots,k_{N}$ be the number of customers per queue in batch~$\boldsymbol{k}$,
and $i\neq j$. Then, the first component of the batch sojourn-time
is the residual switch-over time in $S_{j-1}$ and the contribution
of the arrival of potential new customers before the next visit to
$Q_{i}$ with $\lambda(1-\widetilde{K}(\boldsymbol{B_{j,i-1}}))$.
Afterwards, each customer in $Q_{j},\dots,Q_{i}$ already in the system
at the arrival of the customer batch and each customer in batch~$\boldsymbol{k}$
will make a contribution of $(\boldsymbol{B_{j,i}^{*}})_{l}$, $l=j,\dots,i$
to the batch sojourn-time. Finally, in the switch-over periods between
$Q_{j}$ and $Q_{i}$, new customers can arrive that will be served
before the service of the last customer in the batch. Combining this,
gives the LST the batch sojourn-time distribution of batch~$\boldsymbol{k}$
conditioned that $n_{1},n_{2},\dots,n_{N}$ customers are already
present in the system, the server is in visit period $S_{j-1}$, and
the last customer in the batch will be served in $Q_{i}$,
\begin{multline}
E(e^{-\omega T_{\boldsymbol{k}}^{\left(S_{j-1}\right)}}|n_{1},n_{2},\dots,n_{N})=\widetilde{S}_{j-1}^{R}\left(\omega+\lambda(1-\widetilde{K}(\boldsymbol{B_{j,i-1}}))\right)\\
\times\sideset{}{'}\prod_{l=j}^{i-1}\widetilde{B}_{l,i-1}\left(\omega\right)^{n_{l}+k_{l}}\sideset{}{'}\prod_{l=j}^{i-1}\widetilde{S}_{l,i-1}\left(\omega\right)\widetilde{B}_{i}\left(\omega\right)^{n_{i}+k_{i}}.
\end{multline}
Unconditioning this equation gives \eqref{eq:lst-ex-switch}.
\end{proof}
From \ref{prop:lst-visit-ex} and \ref{prop:lst-visit-ex-1}, it can
be seen that the LST of the batch sojourn-time distribution of batch~$\boldsymbol{k}$
conditioned on a visit/switch-over period is comprised of two terms;
a term independent of batch~$\boldsymbol{k}$ \emph{and} a term that
corresponds to the additional contribution batch~$\boldsymbol{k}$
makes to the batch sojourn-time; 
\begin{align}
\widetilde{T}_{\boldsymbol{k}}^{\left(V_{j}\right)}\left(\omega\right) & =\sum_{i=1}^{N}1_{\left(\boldsymbol{k}\in\mathcal{K}_{j,i}\right)}\widetilde{W}_{i}^{\left(V_{j}\right)}\left(\omega\right)\sideset{}{'}\prod_{l=j}^{i}(\boldsymbol{B_{j,i}^{*}})_{l}^{k_{l}},\label{eq:lst-vj-ex}\\
\widetilde{T}_{\boldsymbol{k}}^{\left(S_{j-1}\right)}\left(\omega\right) & =\sum_{i=1}^{N}1_{\left(\boldsymbol{k}\in\mathcal{K}_{j,i}\right)}\widetilde{W}_{i}^{\left(S_{j-1}\right)}\left(\omega\right)\sideset{}{'}\prod_{l=j}^{i}(\boldsymbol{B_{j,i}^{*}})_{l}^{k_{l}},\label{eq:lst-sj-ex}
\end{align}
where $1_{\left(\boldsymbol{k}\in\mathcal{K}_{j,i}\right)}$ is an
indicator function that is equal to one if for batch~$\boldsymbol{k}$
all its customers are served in $Q_{j},\dots,Q_{i}$ and the last
customer will be served in $Q_{i}$, and zero otherwise. The terms
$\widetilde{W}_{i}^{\left(V_{j}\right)}\left(\omega\right)$ and $\widetilde{W}_{i}^{\left(S_{j-1}\right)}\left(\omega\right)$
can be considered as the time between the batch arrival epoch and
the service completion of the last customer in $Q_{i}$ that was already
in the system at the arrival of the customer batch, excluding batch~$\boldsymbol{k}$
and any arrivals to $Q_{i}$ after the arrival epoch, conditioned
on the location of the server. In case there are only individually
arriving customers this would correspond to the LST of the waiting
time distribution of a customer arriving in $Q_{i}$ conditioned that
the server is in a visit or switch-over period. 

The LST of the batch sojourn-time distribution of a specific customer
batch~$\boldsymbol{k}$ can now be calculated using \eqref{eq:batch-sojourn-ex-specific},
and alternatively using \eqref{eq:lst-vj-ex} and \eqref{eq:lst-sj-ex}
by,
\begin{align}
\widetilde{T}_{\boldsymbol{k}}\left(\omega\right) & =\frac{1}{E\left(C\right)}\sum_{j=1}^{N}\sum_{i=1}^{N}1_{\left(\boldsymbol{k}\in\mathcal{K}^{j,i}\right)}\left(E\left(V_{j}\right)\widetilde{W}_{i}^{\left(V_{j}\right)}\left(\omega\right)+E\left(S_{j-1}\right)\widetilde{W}_{i}^{\left(S_{j-1}\right)}\left(\omega\right)\right)\sideset{}{'}\prod_{l=j}^{i}(\boldsymbol{B_{j,i}^{*}})_{l}^{k_{l}}.\label{eq:batch-sojourn-specific-alt}
\end{align}

Finally, we focus on the LST of the batch sojourn-time of an arbitrary
batch $\widetilde{T}\left(.\right)$. 
\begin{thm}
\label{thm:batch-sojourn-ex}The LST of the batch sojourn-time distribution
of an arbitrary batch $\widetilde{T}\left(\boldsymbol{.}\right)$,
in case of exhaustive service, is given by:
\begin{align}
\widetilde{T}\left(\omega\right) & =\sum_{\boldsymbol{k}\in\mathcal{K}}\pi\left(\boldsymbol{k}\right)\widetilde{T}_{\boldsymbol{k}}\left(\omega\right),\label{eq:batch-sojourn-ex-ab}
\end{align}
where $\widetilde{T}_{\boldsymbol{k}}\left(\omega\right)$ is given
by \eqref{eq:batch-sojourn-ex-specific} or \eqref{eq:batch-sojourn-specific-alt}.
Alternatively, we can write \eqref{eq:batch-sojourn-ex-ab} as,
\begin{align}
\widetilde{T}\left(\omega\right) & =\frac{1}{E\left(C\right)}\sum_{j=1}^{N}\sum_{i=1}^{N}\left(E\left(V_{j}\right)\widetilde{W}_{i}^{\left(V_{j}\right)}\left(\omega\right)+E\left(S_{j-1}\right)\widetilde{W}_{i}^{\left(S_{j-1}\right)}\left(\omega\right)\right)\pi\left(\mathcal{K}_{j,i}\right)\widetilde{K}\left(\boldsymbol{B_{j,i}^{*}}|\mathcal{K}_{j,i}\right).\label{eq:batch-sojourn-ex-alt}
\end{align}
\end{thm}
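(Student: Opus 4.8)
The plan is to derive both displayed expressions by unconditioning the per-batch transforms of Propositions~\ref{prop:lst-visit-ex} and~\ref{prop:lst-visit-ex-1}; no new probabilistic input is needed beyond those propositions and the bookkeeping of Section~\ref{sec:Model-description}. For \eqref{eq:batch-sojourn-ex-ab}, note that batches arrive in a Poisson stream, so an arriving batch observes the system in steady state (PASTA), while by assumption the composition vector $\boldsymbol{K}$ of that batch is independent of the arrival epoch and hence of the state it finds. Conditioning on $\{\boldsymbol{K}=\boldsymbol{k}\}$, an event of probability $\pi(\boldsymbol{k})$, and applying the tower property,
\[
\widetilde{T}(\omega)=E\bigl(e^{-\omega T}\bigr)=\sum_{\boldsymbol{k}\in\mathcal{K}}\pi(\boldsymbol{k})\,E\bigl(e^{-\omega T}\mid\boldsymbol{K}=\boldsymbol{k}\bigr)=\sum_{\boldsymbol{k}\in\mathcal{K}}\pi(\boldsymbol{k})\,\widetilde{T}_{\boldsymbol{k}}(\omega),
\]
since $\widetilde{T}_{\boldsymbol{k}}(\omega)$ is precisely the quantity built in \eqref{eq:batch-sojourn-ex-specific} and \eqref{eq:batch-sojourn-specific-alt}: the construction there already averaged over the server's position, weighting the $V_j$- and $S_{j-1}$-conditioned transforms with the long-run time fractions $E(V_j)/E(C)$ and $E(S_{j-1})/E(C)$, again by PASTA.

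For \eqref{eq:batch-sojourn-ex-alt} I would substitute the alternative form \eqref{eq:batch-sojourn-specific-alt} of $\widetilde{T}_{\boldsymbol{k}}(\omega)$ into the sum just obtained. All sums are finite, so the summations over $\boldsymbol{k}$ and over the period indices $j,i$ may be interchanged; the key point is that the factor $E(V_j)\widetilde{W}_i^{(V_j)}(\omega)+E(S_{j-1})\widetilde{W}_i^{(S_{j-1})}(\omega)$ and the vector $\boldsymbol{B_{j,i}^{*}}$ depend only on $\omega$ and on $(j,i)$, not on $\boldsymbol{k}$, and hence leave the $\boldsymbol{k}$-sum. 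What remains, for each fixed $(j,i)$, is
\[
\sum_{\boldsymbol{k}\in\mathcal{K}}\pi(\boldsymbol{k})\,1_{(\boldsymbol{k}\in\mathcal{K}_{j,i})}\sideset{}{'}\prod_{l=j}^{i}(\boldsymbol{B_{j,i}^{*}})_{l}^{k_{l}}=\sum_{\boldsymbol{k}\in\mathcal{K}_{j,i}}\pi(\boldsymbol{k})\sideset{}{'}\prod_{l=j}^{i}(\boldsymbol{B_{j,i}^{*}})_{l}^{k_{l}}=\pi(\mathcal{K}_{j,i})\,\widetilde{K}\bigl(\boldsymbol{B_{j,i}^{*}}\mid\mathcal{K}_{j,i}\bigr),
\]
the last equality being the definition \eqref{eq:lst-ex-batch} of the conditional PGF evaluated at $\boldsymbol{z}=\boldsymbol{B_{j,i}^{*}}$. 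Collecting terms yields \eqref{eq:batch-sojourn-ex-alt}.

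There is no genuine obstacle; two structural facts need a moment of care. First, the weight attached to $\widetilde{T}_{\boldsymbol{k}}$ must be $\pi(\boldsymbol{k})$ and not a size-biased variant --- this is exactly the independence of $\boldsymbol{K}$ from the arrival instant and from the batch size, and it is why PASTA suffices. Second, for each fixed entry queue $Q_j$ the sets $\{\mathcal{K}_{j,i}\}_{i=1}^{N}$ must partition $\mathcal{K}$, so that $\sum_{i=1}^{N}1_{(\boldsymbol{k}\in\mathcal{K}_{j,i})}=1$ and no batch is omitted or double-counted; this is the identity $\bigcup_{i=1}^{N}\mathcal{K}_{j,i}=\mathcal{K}$ recorded in Section~\ref{sec:Model-description}, combined with the observation --- already used ahead of Proposition~\ref{prop:lst-visit-ex-1} --- that under exhaustive service a batch arriving during $S_{j-1}$ completes in the same queue it would have, had it arrived during $V_j$, so that a single index set $\mathcal{K}_{j,i}$ governs both the $V_j$- and the $S_{j-1}$-contribution to \eqref{eq:batch-sojourn-ex-alt}.
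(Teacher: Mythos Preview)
Your proof is correct and follows essentially the same route as the paper: the law of total probability over batch realizations for \eqref{eq:batch-sojourn-ex-ab}, then substitution of \eqref{eq:batch-sojourn-specific-alt}, factoring out the $\boldsymbol{k}$-independent part $E(V_j)\widetilde{W}_i^{(V_j)}(\omega)+E(S_{j-1})\widetilde{W}_i^{(S_{j-1})}(\omega)$, and recognizing the remaining $\boldsymbol{k}$-sum as $\pi(\mathcal{K}_{j,i})\widetilde{K}(\boldsymbol{B_{j,i}^{*}}\mid\mathcal{K}_{j,i})$ via \eqref{eq:lst-ex-batch}. Your added remarks on PASTA, the independence of $\boldsymbol{K}$ from the arrival epoch, and the partition $\bigcup_i\mathcal{K}_{j,i}=\mathcal{K}$ make explicit what the paper leaves implicit, and the only cosmetic point is that the sum over $\boldsymbol{k}\in\mathcal{K}$ need not be finite, though the interchange is still justified since all terms are bounded in modulus by one.
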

\begin{proof}
It can be easily seen that \eqref{eq:batch-sojourn-ex-ab} follows
by enumerating all possible realizations of customer batches and the
law of total probability.

Next for \eqref{eq:batch-sojourn-ex-alt}, we can partition $\mathcal{K}$
into $\mathcal{K}_{j,i}$ and write \eqref{eq:batch-sojourn-ex-ab}
using \eqref{eq:batch-sojourn-ex-specific} as, 
\begin{align}
\widetilde{T}\left(\omega\right) & =\frac{1}{E\left(C\right)}\sum_{i=1}^{N}\sum_{j=1}^{N}\sum_{\boldsymbol{k}\in\mathcal{K}_{j,i}}\pi\left(\boldsymbol{k}\right)\left(E\left(V_{j}\right)\widetilde{T}_{\boldsymbol{k}}^{\left(V_{j}\right)}\left(\omega\right)+E\left(S_{j-1}\right)\widetilde{T}_{\boldsymbol{k}}^{\left(S_{j-1}\right)}\left(\omega\right)\right).\label{eq:lst-batch-ex-proof}
\end{align}
From \eqref{eq:lst-vj-ex} and \eqref{eq:lst-sj-ex} it can be seen
that when the server is either in $S_{j-1}$ or $V_{j}$, then for
two different customer batches that both finish service in the same
queue their LST of the batch sojourn-time distribution only varies
in the contribution the batch makes to the batch sojourn-time. 

Then, by \eqref{eq:batch-sojourn-specific-alt} and \eqref{eq:lst-ex-batch},
we have by rearrangement
\begin{flalign*}
 & \sum_{\boldsymbol{k}\in\mathcal{K}_{j,i}}\pi\left(\boldsymbol{k}\right)\left(E\left(V_{j}\right)\widetilde{T}_{\boldsymbol{k}}^{\left(V_{j}\right)}\left(\omega\right)+E\left(S_{j-1}\right)\widetilde{T}_{\boldsymbol{k}}^{\left(S_{j-1}\right)}\left(\omega\right)\right)\\
 & =\left(E\left(V_{j}\right)\widetilde{W}_{i}^{\left(V_{j}\right)}\left(\omega\right)+E\left(S_{j-1}\right)\widetilde{W}_{i}^{\left(S_{j-1}\right)}\left(\omega\right)\right)\pi\left(\mathcal{K}_{j,i}\right)\sum_{\boldsymbol{k}\in\mathcal{K}_{j,i}}\frac{\pi\left(\boldsymbol{k}\right)}{\pi\left(\mathcal{K}_{j,i}\right)}\sideset{}{'}\prod_{l=j}^{i}(\boldsymbol{B_{j,i}^{*}})_{l}^{k_{l}}\\
 & =\left(E\left(V_{j}\right)\widetilde{W}_{i}^{\left(V_{j}\right)}\left(\omega\right)+E\left(S_{j-1}\right)\widetilde{W}_{i}^{\left(S_{j-1}\right)}\left(\omega\right)\right)\pi\left(\mathcal{K}_{j,i}\right)\widetilde{K}\left(\boldsymbol{B_{j,i}^{*}}|\mathcal{K}_{j,i}\right).
\end{flalign*}
Substituting the last equation in \eqref{eq:lst-batch-ex-proof} gives
\eqref{eq:batch-sojourn-ex-alt}.
\end{proof}
Differentiating \eqref{eq:batch-sojourn-ex-alt} will give the mean
batch sojourn-time, however in the next section an alternative, more
efficient way to determine the mean batch sojourn-time is presented.

\subsection{Mean batch sojourn-time\label{subsec:Mean-value-analysis-ex}}

In this section, we derive the mean batch sojourn-time of a specific
batch and an arbitrary batch using \emph{Mean Value Analysis} (MVA).
MVA for polling systems was developed by \cite{Winands2006a} to study
mean waiting times in systems with exhaustive, gated service, or mixed
service. The main advantage of MVA is that it has a pure probabilistic
interpretation and is based on standard queueing results, i.e., the
Poisson arrivals see time averages (PASTA) property \cite{Wolff1982}
and Little's Law \cite{Little1961}. Furthermore, MVA evaluates the
polling system at arbitrary time periods and not on embedded points
such as visit beginnings, like in the buffer occupancy method \cite{Takagi1986}
and the descendant set approach \cite{Konheim1994}.

Central in the MVA of \cite{Winands2006a} is the derivation of $E\left(\bar{L}_{i}^{\left(S_{j-1},V_{j}\right)}\right)$,
the mean queue-length at $Q_{i}$ (excluding the potential customer
currently in service) at an arbitrary epoch within switch-over period
$S_{j-1}$ and visit period $V_{j}$;
\begin{align}
E\left(\bar{L}_{i}^{\left(S_{j-1},V_{j}\right)}\right) & =\frac{E\left(S_{j-1}\right)}{E\left(S_{j-1}\right)+E\left(V_{j}\right)}E\left(\bar{L}_{i}^{\left(S_{j-1}\right)}\right)+\frac{E\left(V_{j}\right)}{E\left(S_{j-1}\right)+E\left(V_{j}\right)}E\left(\bar{L}_{i}^{\left(V_{j}\right)}\right),
\end{align}
 where $E\left(\bar{L}_{i}^{\left(S_{j-1}\right)}\right)$ and $E\left(\bar{L}_{i}^{\left(V_{j}\right)}\right)$
are the expected queue-length in $Q_{i}$ during, respectively, a
switch-over/visit period. Subsequently, with $E\left(\bar{L}_{i}^{\left(S_{j-1};V_{j}\right)}\right)$
the mean queue-length $E\left(\bar{L}_{i}\right)$ in $Q_{i}$ can
be determined,
\begin{align}
E\left(\bar{L}_{i}\right) & =\sum_{j=1}^{N}\frac{E\left(S_{j-1}\right)+E\left(V_{j}\right)}{E\left(C\right)}E\left(\bar{L}_{i}^{\left(S_{j-1},V_{j}\right)}\right),\quad i=1,\dots,N,\label{eq:Mean-period-queue-length}
\end{align}
and by Little's law, also the mean waiting time $E\left(W_{i}\right)$
of a random customer in $Q_{i}$, which is defined as the time in
steady-state from the customer's arrival until the start of his/her
service. 

For notation purposes we introduce $\theta_{j}$ as shorthand for
the intervisit period $\left(S_{j-1},V_{j}\right)$; the expected
duration of this period $E\left(\theta_{j}\right)$ is given by, 
\begin{align}
E\left(\theta_{j}\right) & =E\left(S_{j-1}\right)+E\left(V_{j}\right),\quad j=1,\dots,N.\label{eq:mva-visit-ex}
\end{align}
Notice that $\sum_{j=1}^{N}E\left(\theta_{j}\right)=E\left(C\right)$.
In addition, we define $\theta_{j,i}$ as the duration of an intervisit
period starting in $\theta_{j}$ and ending in $\theta_{i}$, the
expected duration of this period $E\left(\theta_{j,i}\right)$ is
equal to, 
\begin{align}
E\left(\theta_{j,i}\right) & =\sideset{}{'}\sum_{l=j}^{i}E\left(\theta_{l}\right),\quad i=1,\dots,N,\,j=1,\dots,N,\label{eq:mva-conseq-visit-ex}
\end{align}
and where $E\left(\theta_{j,i}^{R}\right)=E\left(\theta_{j,i}^{2}\right)/2E\left(\theta_{j,i}\right)$
is the mean residual duration of this period. However, $E\left(\theta_{j,i}^{2}\right)$
is unknown and not straightforward to derive directly. In the MVA,
based on probabilistic arguments, $E\left(\theta_{j,i}^{2}\right)$
will be expressed in terms of $E\left(\bar{L}_{i}^{\left(\theta_{j}\right)}\right)$. 

We denote $E\left(B_{j,i}\right)$ as the mean service of a customer
in $Q_{j}$ and all its descendants \emph{before} the server starts
serving $Q_{i}$. Let $E\left(B_{j,j}\right)=E\left(B_{j}\right)$
and $E\left(B_{j,j+1}\right)=E\left(B_{j}\right)/\left(1-\rho_{j}\right)$
be the expected busy-period initiated by a customer in $Q_{j}$. Then,
$E\left(B_{j,j+2}\right)$ equals the busy-period in $Q_{j}$ plus
all the customers that arrive during this busy period in $Q_{j+1}$
and the busy periods that they trigger,
\begin{align*}
E\left(B_{j,j+2}\right) & =\frac{E\left(B_{j}\right)}{1-\rho_{j}}\left(1+\frac{\rho_{j+1}}{1-\rho_{j+1}}\right)=\frac{E\left(B_{j}\right)}{\left(1-\rho_{j}\right)\left(1-\rho_{j+1}\right)}.
\end{align*}
In general we can write $E\left(B_{j,i}\right)$ for $i\neq j$,
\begin{align}
E\left(B_{j,i}\right) & =\frac{E\left(B_{j}\right)}{\sideset{}{'}\prod_{l=j}^{i-1}\left(1-\rho_{l}\right)},\quad i=1,\dots,N,\,j=1,\dots,N.\label{eq:decen-var-ex-4}
\end{align}
Also, let $E\left(S_{j,i}\right)$ is denoted by the switch-over in
$Q_{j}$ and the service of all the customers that arrive during $E\left(S_{j}\right)$
and their descendants \emph{before} the server starts serving $Q_{i}$.
Then $E\left(S_{j,j+1}\right)=E\left(S_{j}\right)$ and, in general,
for $i\neq j+1$,
\begin{equation}
E\left(S_{j,i}\right)=\frac{E\left(S_{j}\right)}{\sideset{}{'}\prod_{l=j+1}^{i-1}\left(1-\rho_{l}\right)},\quad i=1,\dots,N,\,j=1,\dots,N.
\end{equation}
Finally, $E\left(B_{j,i}^{R}\right)$ is the mean residual service
of a customer in $Q_{j}$ and all its descendants \emph{before} the
server starts serving $Q_{i}$ and is given by replacing $E\left(B_{j}\right)$
by $E\left(B_{j}^{R}\right)=E\left(B_{j}^{2}\right)/2E\left(B_{j}\right)$
in $E\left(B_{j,i}\right)$. In addition, $E\left(S_{j,i}^{R}\right)$
is defined as $E\left(S_{j,i}\right)$ and by replacing $E\left(S_{j}\right)$
by $E\left(S_{j}^{R}\right)=E\left(S_{j}^{2}\right)/2E\left(S_{j}\right)$.

In the MVA a set of $N^{2}$ linear equations is derived for $E\left(\bar{L}_{i}\right)$
in terms of unknowns $E\left(\bar{L}_{i}^{\left(\theta_{j}\right)}\right)$.
For this we have to consider the waiting time of an arbitrary customer
and make use of the arrival relation and the PASTA property. Assume
that an arbitrary customer enters the system in $Q_{i}$. The waiting
time of the customer consists of (i) the service of $E\left(\bar{L}_{i}\right)$
customers already at $Q_{i}$ upon its arrival to the system, (ii)
the service of $E\left(K_{ii}\right)/2E\left(K_{i}\right)$ customers
that arrived in the same customer batch, but are placed before the
arbitrary customer in $Q_{i}$, (iii) if the server is currently in
intervisit period $\theta_{i}$, then the arbitrary customer has to
wait with probability $\rho_{i}$ for the residual service time $E\left(B_{i}^{R}\right)$
and with probability $E\left(S_{i-1}\right)/E\left(C\right)$ for
the residual switch-over time $E\left(S_{i-1}^{R}\right)$. Finally,
(iv) whenever the server is not in intervisit period $\theta_{i}$,
the arbitrary customer has to wait for the expected residual duration
before the server returns at $Q_{i}$. Based on these components,
the mean waiting time $E\left(W_{i}\right)$ of a customer in $Q_{i}$,
$i=1,\dots,N$ is given by,
\begin{multline}
E\left(W_{i}\right)=E\left(\bar{L}_{i}\right)E\left(B_{i}\right)+\frac{E\left(K_{ii}\right)}{2E\left(K_{i}\right)}E\left(B_{i}\right)+\rho_{i}E\left(B_{i}^{R}\right)\\
+\frac{E\left(S_{i-1}\right)}{E\left(C\right)}E\left(S_{i-1}^{R}\right)+\bigl(1-\frac{E\left(\theta_{i}\right)}{E\left(C\right)}\bigr)\left(E\left(\theta_{i+1,i-1}^{R}\right)+E\left(S_{i-1}\right)\right).\label{eq:mva-w-ex}
\end{multline}
The next step to derive the equations is to relate unknowns $E\left(\theta_{i+1,i-1}^{R}\right)$
to $E\left(\bar{L}_{i}^{\left(\theta_{j}\right)}\right)$. Consider
$E\left(\theta_{j,i}^{R}\right)$ the expected residual duration of
an intervisit period starting in $\theta_{j}$ and ending in $\theta_{i}$
given that an arbitrary customer batch just entered the system. Then
with probability $E\left(\theta_{l}\right)/E\left(\theta_{j,i}\right)$,
the server is during this period in intervisit period $\theta_{l}$,
$l=j,\dots,i$, and the expected residual duration until the intervisit
ending of $\theta_{i}$, conditioned that the server is in intervisit
period $\theta_{l}$, is defined as follows. First, with probability
$E\left(V_{l}\right)/E\left(\theta_{l}\right)$ the server is busy
serving a customer in $Q_{l}$ and with probability $E\left(S_{l-1}\right)/E\left(\theta_{l}\right)$
the server is in switch-over period $S_{l-1}$. During the residual
service/switch-over time new customers can arrive that will be served
before the intervisit ending in $\theta_{i}$, which equals $E\left(B_{l,i+1}^{R}\right)$
and $E\left(S_{l-1,i+1}^{R}\right)$ respectively. In addition, the
expected number of customers in $Q_{n}$ given the server is in $\theta_{l}$,
$E\left(\bar{L}_{n}^{\left(\theta_{l}\right)}\right)$, and the expected
number of customers $E\left(K_{nl}\right)/E\left(K_{n}\right)$ that
arrived in $Q_{n}$ in the arbitrary customer batch will increase
the duration of $E\left(\theta_{j,i}^{R}\right)$ by $E\left(B_{n,i+1}\right)$.
Finally, the customer also has to wait for all the switch-over times
$E\left(S_{n,i+1}\right)$, $n=j,\dots,i$ between $Q_{n}$ to $Q_{n+1}$
plus the customers that arrive during the switch-over times and their
the descendants that will be served before the end of $E\left(\theta_{j,i}^{R}\right)$.
Combining this gives the following expression for $i\neq j-1$,
\begin{multline}
E\left(\theta_{j,i}^{R}\right)=\sideset{}{'}\sum_{l=j}^{i}\frac{E\left(\theta_{l}\right)}{E\left(\theta_{j,i}\right)}\biggl(\frac{E\left(V_{l}\right)}{E\left(\theta_{l}\right)}E\left(B_{l,i+1}^{R}\right)+\frac{E\left(S_{l-1}\right)}{E\left(\theta_{l}\right)}E\left(S_{l-1,i+1}^{R}\right)\\
+\sideset{}{'}\sum_{n=l}^{i}\left[\frac{E\left(K_{nl}\right)}{E\left(K_{n}\right)}+E\left(\bar{L}_{n}^{\left(\theta_{l}\right)}\right)\right]E\left(B_{n,i+1}\right)+\sideset{}{'}\sum_{n=1}^{i-l}E\left(S_{l+n-1,i+1}\right)\biggr),\label{eq:res-ex-mva}
\end{multline}
It is now possible to set up a set of $N^{2}$ linear equations. First,
after the server has visited $Q_{i}$, there will be no customers
present in the queue. Therefore, the number of customers in $Q_{i}$
given an arbitrary moment in an intervisit period starting in $\theta_{i+1}$
and ending in $\theta_{j}$ equals the number of Poisson arrivals
during the age of this period \cite{Winands2006a}. Because the age
is equal to the residual time in distribution, we have for $i=1,\dots,N,\,j=1,\dots,N$,
and $i\neq j$,
\begin{align}
\sideset{}{'}\sum_{l=i+1}^{j}\frac{E\left(\theta_{l}\right)}{E\left(\theta_{i+1,j}\right)}E\left(\bar{L}_{i}^{\left(\theta_{l}\right)}\right) & =\lambda_{i}E\left(\theta_{i+1,j}^{R}\right).\label{eq:mva-lin-eq-1}
\end{align}
Second, by \eqref{eq:mva-w-ex} and using Little's Law, $\lambda_{i}E\left(W_{i}\right)=E\left(\bar{L}_{i}\right)$,
into \eqref{eq:Mean-period-queue-length} gives, for $i=1,2\dots,N$,
\begin{multline}
\sum_{j=1}^{N}\frac{E\left(\theta_{j}\right)}{E\left(C\right)}E\left(\bar{L}_{i}^{\left(\theta_{j}\right)}\right)=\frac{\lambda_{i}}{1-\rho_{i}}\left(\frac{E\left(K_{ii}\right)}{2E\left(K_{i}\right)}E\left(B_{i}\right)+\rho_{i}E\left(B_{i}^{R}\right)\right.\\
\left.\frac{E\left(S_{i-1}\right)}{E\left(C\right)}E\left(S_{i-1}^{R}\right)+\bigl(1-\frac{E\left(\theta_{i}\right)}{E\left(C\right)}\bigr)\left(E\left(\theta_{i+1,i-1}^{R}\right)+E\left(S_{i-1}\right)\right)\right).\label{eq:mva-lin-eq-2}
\end{multline}
With \eqref{eq:mva-lin-eq-1} and \eqref{eq:mva-lin-eq-2} a set of
$N^{2}$ linear equations for unknowns $E\left(\bar{L}_{i}^{\left(\theta_{j}\right)}\right)$
are now defined. Solving the set of linear equations and by \eqref{eq:Mean-period-queue-length}
and \eqref{eq:mva-w-ex} will give the expected queue-lengths and
waiting times.

In order to derive the mean batch sojourn-time $E\left(T_{\boldsymbol{k}}\right)$
of customer batch $\boldsymbol{k}$, $E\left(\bar{L}_{i}^{\left(\theta_{j}\right)}\right)$
also plays an integral role. Similar as in \eqref{eq:batch-sojourn-ex-specific},
in order to calculate the expected batch sojourn-time distribution
of a specific customer batch~$\boldsymbol{k}$, we explicitly condition
on the location on the server,
\begin{align}
E\left(T_{\boldsymbol{k}}\right) & =\frac{1}{E\left(C\right)}\sum_{j=1}^{N}E\left(\theta_{j}\right)E\left(T_{\boldsymbol{k}}^{\left(\theta_{j}\right)}\right),\label{eq:ex-thr-ex-sp}
\end{align}
where $E\left(T_{\boldsymbol{k}}^{\left(\theta_{j}\right)}\right)$
is the expected batch sojourn-time distribution of a specific customer
batch~$\boldsymbol{k}$ given that the server is in intervisit period
$\theta_{j}$. $E\left(T_{\boldsymbol{k}}^{\left(\theta_{j}\right)}\right)$
can derived in a similar way as \eqref{eq:res-ex-mva}. First, if
the last customer will be served in $Q_{i}$, then with probability
$E\left(V_{j}\right)/E\left(\theta_{j}\right)$ and $E\left(S_{j-1}\right)/E\left(\theta_{j}\right)$
the arriving customer batch has to wait for the residual service/switch-over
time during which new customers can arrive that will be served before
the visit beginning in $Q_{i}$. Note that the customers arriving
at $Q_{i}$ during these residual times will not affect the batch
sojourn-time of batch~$\boldsymbol{k}$ since they will be served
after the last customer in the batch is served. Then each customer
already in the system and in batch~$\boldsymbol{k}$ in $Q_{l}$,
$l=j,\dots,i$ will make a contribution of $E\left(B_{l,i}\right)$
to the batch sojourn-time. Finally, the batch also has to wait for
all the switch-over times between $Q_{j}$ to $Q_{i}$ and all their
descendants that will be served before the server reaches $Q_{i}$.
This gives the following expression,
\begin{multline}
E\left(T_{\boldsymbol{k}}^{\left(\theta_{j}\right)}\right)=\frac{E\left(V_{j}\right)}{E\left(\theta_{j}\right)}E\left(B_{j,i}^{R}\right)+\frac{E\left(S_{j-1}\right)}{E\left(\theta_{j}\right)}E\left(S_{j-1,i}^{R}\right)+\sideset{}{'}\sum_{l=j}^{i}E\left(\bar{L}_{l}^{\left(\theta_{j}\right)}\right)E\left(B_{l,i}\right)\\
+\sideset{}{'}\sum_{l=j}^{i}k_{l}E\left(B_{l,i}\right)+\sideset{}{'}\sum_{n=1}^{i-j}E\left(S_{j+n-1,i}\right),
\end{multline}
Note that the same decomposition as \eqref{eq:lst-vj-ex} and \eqref{eq:lst-sj-ex}
also holds for the expected batch sojourn-time,
\begin{align*}
E\left(T_{\boldsymbol{k}}^{\left(\theta_{j}\right)}\right) & =\sum_{i=1}^{N}1_{\left(\boldsymbol{k}\in\mathcal{K}^{j,i}\right)}\left[E\left(W_{i}^{\left(\theta_{j}\right)}\right)+\sideset{}{'}\sum_{l=j}^{i}k_{l}E\left(B_{l,i}\right)\right],
\end{align*}
where $E\left(W_{i}^{\left(\theta_{j}\right)}\right)$ is the expected
time between the batch arrival epoch and the service completion of
the last customer in $Q_{i}$ that is already in the system, excluding
any arrivals to $Q_{i}$ after the arrival epoch. The term $\sideset{}{'}\sum_{l=j}^{i}k_{l}E\left(B_{l,i}\right)$
can be interpreted as the total contribution batch~$\boldsymbol{k}$
makes to the batch sojourn-time. 

Finally, the expected batch sojourn-time of an arbitrary customer
batch is obtained by multiplying $E\left(T_{\boldsymbol{k}}\right)$
with the probability that a particular batch~$\boldsymbol{k}$ enters
the system,
\begin{align}
E\left(T\right) & =\sum_{\boldsymbol{k}\in\mathcal{K}}\pi\left(\boldsymbol{k}\right)E\left(T_{\boldsymbol{k}}\right).\label{eq:ex-sojourn-batch}
\end{align}
However if there are many different realizations of customer batches
possible, \eqref{eq:ex-sojourn-batch} might not be computational
feasible to consider, since for every $\boldsymbol{k}$ we have to
determine the mean batch sojourn-time given that the server starts
in intervisit period $\theta_{j}$ and ends in $\theta_{i}$; in total
there are then $\left|\mathcal{K}\right|\times N\times N$ combinations
to consider, where $\left|\mathcal{K}\right|$ denotes the size of
support $\mathcal{K}$. Instead, by using $E\left(K_{l}|\mathcal{K}_{j,i}\right)$
we can rewrite \eqref{eq:ex-sojourn-batch} as follows,{\allowdisplaybreaks
\begin{align*}
E\left(T\right) & =\frac{1}{E\left(C\right)}\sum_{j=1}^{N}\sum_{i=1}^{N}\sum_{\boldsymbol{k}\in\mathcal{K}_{j,i}}\pi\left(\boldsymbol{k}\right)E\left(\theta_{j}\right)E\left(T_{\boldsymbol{k}}^{\left(\theta_{j}\right)}\right)\\
 & =\frac{1}{E\left(C\right)}\sum_{j=1}^{N}\sum_{i=1}^{N}E\left(\theta_{j}\right)\sum_{\boldsymbol{k}\in\mathcal{K}_{j,i}}\pi\left(\boldsymbol{k}\right)\left(E\left(W_{i}^{\left(\theta_{j}\right)}\right)+\sideset{}{'}\sum_{l=j}^{i}k_{l}E\left(B_{l,i}\right)\right)\\
 & =\frac{1}{E\left(C\right)}\sum_{j=1}^{N}\sum_{i=1}^{N}E\left(\theta_{j}\right)\pi\left(\mathcal{K}_{j,i}\right)\left(E\left(W_{i}^{\left(\theta_{j}\right)}\right)+\sideset{}{'}\sum_{l=j}^{i}E\left(K_{l}|\mathcal{K}_{j,i}\right)E\left(B_{l,i}\right)\right).
\end{align*}
The advantage is that the number of combinations reduces to $N\times N$,
and $\pi\left(\mathcal{K}_{j,i}\right)$ can be determined in $\left|\mathcal{K}\right|$
steps.}

\section{Locally-gated service\label{sec:Locally-gated-service}}

In this section, we study batch sojourn-times in a polling system
with locally-gated service. In \ref{subsec:The-joint-queue-length-gated}
and \ref{subsec:Batch-sojourn-distribution-gated} we will study the
joint queue-length distribution and the LST of the batch sojourn-time
distribution. Instead of providing a thorough analysis, we present
the differences with the analysis of \ref{sec:Exhaustive-service}.
Finally, in \ref{subsec:Mean-value-analysis-gated} a Mean Value Analysis
(MVA) is presented to calculate the mean batch sojourn-time.

\subsection{The joint queue-length distributions\label{subsec:The-joint-queue-length-gated}}

Similar as in \ref{subsec:The-joint-queue-length-ex}, we start by
defining the laws of motions in case of locally-gated service. For
this we distinguish between customers that are standing behind of
the gate and those who are standing before the gate \cite{Boon2012}.
Customers that are standing behind the gate will be served in the
current cycle, whereas customers before the gate will only be served
in the next cycle. Let $\widetilde{LB}^{\left(V_{i}\right)}\left(\boldsymbol{z}\right)$,
$\widetilde{LB}^{\left(S_{i}\right)}\left(\boldsymbol{z}\right)$,
$\widetilde{LC}^{\left(S_{i}\right)}\left(\boldsymbol{z}\right)$,
and $\widetilde{LC}^{\left(V_{i}\right)}\left(\boldsymbol{z}\right)$
be the joint queue-length PGF at \emph{visit}/\emph{switch-over} beginnings
and completions at $Q_{i}$, for $i=1,\dots,N$, where $\boldsymbol{z}=\left(z_{1},\dots,z_{N},z_{G}\right)$
is an $N+1$ dimensional vector. The first $N$ elements correspond
with the number of customers that are standing behind gate $Q_{i}$,
$i=1,\dots,N$, whereas element $N+1$, $z_{G}$, is used during visit
periods to correspond with the number of customers that are currently
standing before the gate at the queue that is currently being visited.

Then the law of motions for locally-gated service are as follows,{\allowdisplaybreaks
\begin{align}
\widetilde{LC}^{\left(V_{i}\right)}\left(\boldsymbol{z}\right)= & \widetilde{LB}^{\left(V_{i}\right)}\left(z_{1},\dots,z_{i-1},\widetilde{B}_{i}\left(\lambda-\lambda K\left(z_{1},\dots,z_{i-1},z_{G},z_{i+1},\dots,z_{N}\right)\right),\right.\nonumber \\
 & \quad\left.z_{i+1},\dots,z_{N},z_{G}\right),\label{eq:lm-ga-1}\\
\widetilde{LB}^{\left(S_{i}\right)}\left(\boldsymbol{z}\right)= & \widetilde{LC}^{\left(V_{i}\right)}\left(z_{1},\dots,z_{N},z_{i}\right),\label{eq:lm-ga-2}\\
\widetilde{LC}^{\left(S_{i}\right)}\left(\boldsymbol{z}\right)= & \widetilde{LB}^{\left(S_{i}\right)}\left(\boldsymbol{z}\right)\widetilde{S}_{i}\left(\lambda-\lambda\widetilde{K}\left(z_{1},\dots,z_{i-1},z_{i},z_{i+1},\dots,z_{N}\right)\right),\label{eq:lm-ga-3}\\
\widetilde{LB}^{\left(V_{i+1}\right)}\left(\boldsymbol{z}\right)= & \widetilde{LC}^{\left(S_{i}\right)}\left(\boldsymbol{z}\right),\label{eq:lm-ga-4}
\end{align}
}Equation~\eqref{eq:lm-ga-1} states that the queue-length in $Q_{j}$,
$j\neq i$ at the end of visit period $V_{i}$ is composed of the
number of customers already at $Q_{j}$ at the visit beginning plus
all the customers that arrived in the system during the current visit
period. However for $Q_{i}$, only the customers that were standing
behind the gate are served before the end of the visit completion;
customers that arrived to $Q_{i}$ during this visit period are placed
before the gate and will be served during the next visit to $Q_{i}$.
In \eqref{eq:lm-ga-2} it can be seen that the PGF of a visit completion
corresponds to the PGF of the next switch-over beginning, except that
the customer standing before the gate in $Q_{i}$ are now placed behind
the gate. Finally, the interpretation of \eqref{eq:lm-ga-3} and \eqref{eq:lm-ga-4}
is the same as for \eqref{eq:lm-ex-3} and \eqref{eq:lm-ex-4}. 

In order to define the PGF of the joint queue-length distribution,
Eisenberg's relationship \eqref{eq:Eisenberg} is also valid for locally-gated
service. However, the joint queue-length distribution at service beginnings
and completions \eqref{eq:Eisenberg-2} should be modified to,
\begin{align}
\widetilde{LC}^{\left(B_{i}\right)}\left(\boldsymbol{z}\right) & =\widetilde{LB}^{\left(B_{i}\right)}\left(\boldsymbol{z}\right)\left[\widetilde{B}_{i}\left(\lambda-\lambda\widetilde{K}\left(z_{1},\dots,z_{i-1},z_{G},z_{i+1},\dots,z_{N}\right)\right)/z_{i}\right],\label{eq:Eisenberg-2-1}
\end{align}
since during a service period in $Q_{i}$ arriving customers who join
$Q_{i}$ are placed before the gate. A similar modification also applies
for the PGF of the joint queue-length distributions at an arbitrary
moment during $V_{i}$,
\begin{align}
\tilde{L}^{\left(V_{i}\right)}\left(\boldsymbol{z}\right) & =\widetilde{LB}^{\left(B_{i}\right)}\left(\boldsymbol{z}\right)\frac{1-\widetilde{B}_{i}\left(\lambda-\lambda\widetilde{K}\left(z_{1},\dots,z_{i-1},z_{G},z_{i+1},\dots,z_{N}\right)\right)}{E\left(B_{i}\right)\left(\lambda-\lambda\widetilde{K}\left(z_{1},\dots,z_{i-1},z_{G},z_{i+1},\dots,z_{N}\right)\right)}.\label{eq:LST-Q-Vj-1}
\end{align}
Then, all the other results from \ref{subsec:The-joint-queue-length-ex}
can be easily modified for locally-gated service.

\subsection{Batch sojourn-time distribution\label{subsec:Batch-sojourn-distribution-gated} }

In the following section we derive the LST of the steady-state batch
sojourn-time distribution for locally-gated service. Assume than an
arriving customer batch~$\boldsymbol{k}$ enters the system while
the server is currently within visit period $V_{j-1}$ or switch-over
period $S_{j-1}$ such that the last customer in the batch will be
served in $Q_{i}$. This means $k_{i}>0$ and all the other customers
arriving in the same batch should be served before the next visit
to $Q_{i}$; $k_{l}\geq0$, $l=j,\dots,i-1$, and $k_{l}=0$ elsewhere.
Whenever a customer arrives in the same queue that is currently being
visited, then this customer will be placed before the gate. As a consequence,
this customer will be served last in the batch since the server will
visit first all the other queues before serving this customer.

Similar as for exhaustive service, let $B_{j,i}$ $i,j=1,\dots,N$,
be the service of a tagged customer in $Q_{j}$ plus all its decedents
that will be served before or during the next visit to $Q_{i}$. Since
during a service period in $Q_{j}$ incoming customers to $Q_{j}$
are placed before the gate, we have 
\begin{align}
B_{j,i} & =\begin{cases}
B_{j} & \mbox{if }i=j,\\
B_{j}+\sideset{}{'}\sum_{l=j+1}^{i}\limits{\displaystyle \sum_{m=1}^{N_{l}\left(B_{j}\right)}B_{l_{m},i}}, & \mbox{otherwise},
\end{cases}
\end{align}
where $B_{j}$ is the service time of the tagged customer in $Q_{j}$,
$N_{l}\left(B_{j}\right)$ denotes the number of customers that arrive
in $Q_{l}$ during the service time of the tagged customer in $Q_{j}$,
and $B_{l_{m},i}$ is a sequence of (independent) of $B_{l,i}$'s.
Let $\widetilde{B}_{j,i}\left(.\right)$ be the LST which is given
by,
\begin{alignat}{1}
\widetilde{B}_{j,i}\left(\omega\right) & =\widetilde{B}_{j}\left(\omega+\lambda(1-\widetilde{K}(\boldsymbol{B_{j+1,i}}))\right),
\end{alignat}
where $\boldsymbol{B_{j+1,i}}$ is an $N$-dimensional vector similar
defined as \eqref{eq:decend-org}. We define $\boldsymbol{B_{j,i}^{*}}$
as an $N+1$-dimensional vector defined as follows,
\begin{align}
(\boldsymbol{B_{j,i}^{*}})_{l} & =\begin{cases}
\widetilde{B}_{i}\left(\omega\right), & \mbox{if }l=i,\\
1, & \mbox{if }l=N+1,\\
(\boldsymbol{B_{j,i-1}})_{l}, & \mbox{otherwise}.
\end{cases}\label{eq:decend-alt-1}
\end{align}
Finally, let $\boldsymbol{B_{j,i}^{G}}$, $i,j=1,\dots,N$, be an
$N+1$-dimensional vector defined as for $j\neq i$,
\begin{equation}
(\boldsymbol{B_{j,i}^{G}})_{l}=\begin{cases}
(\boldsymbol{B_{j,i}})_{l} & \mbox{if }l=j,\dots,i,\\
1, & \mbox{otherwise,}
\end{cases}
\end{equation}
and for $j=i$,
\begin{multline}
\boldsymbol{B_{i,i}^{G}}=\left(\widetilde{B}_{1,i-1}\left(\omega\right),\dots,\widetilde{B}_{i-1,i-1}\left(\omega\right),\right.\\
\widetilde{B}_{i}\left(\omega+\lambda(1-\widetilde{K}(\widetilde{B}_{1,i-1}\left(\omega\right),\dots,\widetilde{B}_{i}\left(\omega\right),\dots,\widetilde{B}_{N,i-1}\left(\omega\right)))\right)\\
\left.,\widetilde{B}_{i+1,i-1}\left(\omega\right),\dots,\widetilde{B}_{N,i-1}\left(\omega\right),\widetilde{B}_{i}\left(\omega\right)\right),
\end{multline}
The interpretation of $\boldsymbol{B_{j,i}^{G}}$, $j\neq i$ is similar
to \eqref{eq:decend-org}. On the other hand, $\boldsymbol{B_{i,i}^{G}}$
contains the service times of a complete cycle starting in $Q_{i}$.
This includes the service times of all the customers that are standing
behind the gate in $Q_{i}$, the service times of all the customers
in $Q_{i+1},\dots,Q_{i-1}$ that were already in the system on the
arrival of the customer batch or entered the system before the next
visit to $Q_{i}$, and when the server reaches $Q_{i}$ again the
service times of all the customers that were standing before the gate
when the cycle in $Q_{i}$ started.

We first focus on the batch sojourn-time of a customer batch that
arrives during a visit period $V_{j-1}$. The batch sojourn-time of
customer batch~$\boldsymbol{k}$ that arrives when the server is
in visit period $V_{j-1}$ consists of the (i) residual service time
in $Q_{j-1}$, (ii) the service of all the customers behind the gate
in $Q_{j-1},\dots,Q_{i}$, (iii) the service of all new customer arrivals
that arrive after customer batch~$\boldsymbol{k}$ in $Q_{j},\dots,Q_{i-1}$
before the server reaches $Q_{i}$, (iv) switch-over times $S_{j-1},\dots,S_{i-1}$,
(v) the service of the customers in customer batch~$\boldsymbol{k}$,
and (vi) if $i=j-1$ also the customers before the gate in $Q_{i}$.
Because incoming customers are placed before the gate when the server
is in visit period $V_{j-1}$, we have to modify \eqref{eq:lvjzw-ex}
to, 
\begin{align}
\widetilde{L}^{\left(V_{j-1}\right)}\left(\boldsymbol{z},\omega\right) & =\widetilde{LB}^{\left(B_{j-1}\right)}\left(\boldsymbol{z}\right)\widetilde{B}_{j-1}^{PR}\left(\lambda-\lambda K\left(z_{1},\dots,z_{j-2},z_{G},z_{j},\dots,z_{N}\right),\omega\right).
\end{align}

Then, the LST of batch sojourn-time distribution of batch~$\boldsymbol{k}$
given that the server is in visit period $V_{j-1}$ is given in the
next proposition.
\begin{prop}
\label{prop:lst-ga-vj}The LST of the batch sojourn-time distribution
of batch~$\boldsymbol{k}$ conditioned that the server is in visit
period $V_{j-1}$ and the last customer in the batch will be served
in $Q_{i}$ is given by,
\begin{multline}
\widetilde{T}_{\boldsymbol{k}}^{\left(V_{j-1}\right)}\left(\omega\right)=\widetilde{L}^{\left(V_{j-1}\right)}\left(\boldsymbol{B_{j-1,i}^{G}},\,\omega+\lambda(1-\widetilde{K}(\boldsymbol{B_{j,i-1}}))\right)\\
\times\sideset{}{'}\prod_{l=j-1}^{i-1}\widetilde{S}_{l,i-1}\left(\omega\right)\frac{1}{(\boldsymbol{B_{j-1,i}^{G}})_{j-1}}\sideset{}{'}\prod_{l=j}^{i}(\boldsymbol{B_{j,i}^{*}})_{l}^{k_{l}}.\label{eq:lst-ga-visit}
\end{multline}
 
\end{prop}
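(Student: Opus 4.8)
The plan is to follow the proof of Proposition~\ref{prop:lst-visit-ex} essentially line for line, inserting the gate bookkeeping that separates locally-gated from exhaustive service. First I would freeze the system immediately before the arrival of batch~$\boldsymbol{k}$ while the server is in $V_{j-1}$, and condition on the full state: let $n_1,\dots,n_N$ be the numbers of customers standing \emph{behind} the gate at $Q_1,\dots,Q_N$, and let $n_G$ be the number standing \emph{before} the gate at the currently visited queue $Q_{j-1}$ (this is the coordinate tracked by the extra variable $z_G$ in $\widetilde L^{(V_{j-1})}(\boldsymbol z,\omega)$). One of the $n_{j-1}$ behind-gate customers at $Q_{j-1}$ is the one in service, so it contributes the residual service time rather than a fresh one; this is the origin of the factor $1/(\boldsymbol{B_{j-1,i}^{G}})_{j-1}$ appearing in \eqref{eq:lst-ga-visit}.

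Next I would write $T_{\boldsymbol{k}}^{(V_{j-1})}$ as the sum of the six additive contributions (i)--(vi) listed just before the statement, and convert each into an LST factor exactly as in Proposition~\ref{prop:lst-visit-ex}, but with the locally-gated descendant transforms. The residual service in $Q_{j-1}$ together with the work it spawns in $Q_j,\dots,Q_{i-1}$ before the server reaches $Q_i$ gives $\widetilde B_{j-1}^{R}\bigl(\omega+\lambda(1-\widetilde K(\boldsymbol{B_{j,i-1}}))\bigr)$; note that $\boldsymbol{B_{j,i-1}}$ is indexed from $j$, so arrivals back into $Q_{j-1}$ are correctly excluded, since they land before the gate and play no role in the batch sojourn-time. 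Each behind-gate customer at $Q_l$ and each batch customer contributes the appropriate coordinate of $\boldsymbol{B_{j-1,i}^{G}}$ resp.\ $\boldsymbol{B_{j,i}^{*}}$ (powers $n_l$ resp.\ $k_l$), and the switch-over periods $S_{j-1},\dots,S_{i-1}$ and their descendants give $\sideset{}{'}\prod_{l=j-1}^{i-1}\widetilde S_{l,i-1}(\omega)$. Assembling these yields the conditional transform in the same shape as the exhaustive case, and unconditioning over the joint queue-length-and-residual distribution $\widetilde L^{(V_{j-1})}(\boldsymbol z,\omega)$ --- by substituting $\boldsymbol z=\boldsymbol{B_{j-1,i}^{G}}$ and the deflated argument $\omega+\lambda(1-\widetilde K(\boldsymbol{B_{j,i-1}}))$ --- produces \eqref{eq:lst-ga-visit}.

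The step I expect to be the real obstacle is the self-overlap case $i=j-1$, where the batch also drops a customer into the queue currently being visited. That customer is placed before the gate, hence served only at the \emph{next} visit to $Q_{j-1}$, after a full cycle's worth of work has been completed; this is exactly why $\boldsymbol{B_{i,i}^{G}}$ is given a special definition carrying the service times of an entire cycle started at $Q_i$, including the slot that re-enters $Q_i$ at the cycle's end. I would verify carefully that feeding $\boldsymbol{B_{j-1,i}^{G}}$ --- in particular its $(j-1)$-st and $(N+1)$-st coordinates --- into $\widetilde L^{(V_{j-1})}(\boldsymbol z,\omega)$ reproduces ``residual of the current service $+$ rest of the current visit $+$ one more cycle until the before-gate customers clear,'' and that removing the single factor $1/(\boldsymbol{B_{j-1,i}^{G}})_{j-1}$ compensates for the in-service customer without disturbing the $n_G$ before-gate customers. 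The remaining manipulations --- pushing the product factors through the PGF and matching indices in the cyclic sums and products --- are routine, just as in Proposition~\ref{prop:lst-visit-ex}.
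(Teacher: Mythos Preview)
Your proposal is correct and follows precisely the approach the paper takes: the paper's own proof simply says that during $V_{j-1}$ incoming customers to $Q_{j-1}$ are placed before the gate, and that with this modification the same steps as in the proof of Proposition~\ref{prop:lst-visit-ex} yield \eqref{eq:lst-ga-visit}. Your write-up is in fact more detailed than the paper's, particularly in handling the self-overlap case $i=j-1$ and the role of the $(N+1)$-st coordinate, but the underlying argument is identical.
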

\begin{proof}
During visit period $V_{j-1}$ incoming customers to $Q_{j-1}$ are
placed before the gate and will be served in the next visit. Taken
this into account, the same steps as in the proof of \ref{prop:lst-visit-ex}
can be used to derive \eqref{eq:lst-ga-visit}.
\end{proof}
Next, we derive the LST of batch sojourn-time distribution of batch~$\boldsymbol{k}$
given that the server is in switch-over period $S_{j-1}$. For this
we modify \eqref{eq:lsjzw-ex} to, 
\begin{align}
\widetilde{L}^{\left(S_{j-1}\right)}\left(\boldsymbol{z},\omega\right) & =\widetilde{LB}^{\left(S_{j-1}\right)}\left(\boldsymbol{z}\right)\widetilde{S}_{j-1}^{PR}\left(\lambda-\lambda\widetilde{K}\left(z_{1},\dots,z_{j-2},z_{j-1},z_{j},\dots,z_{N}\right),\omega\right).
\end{align}

\begin{prop}
\label{prop:lst-ga-sj}The LST of the batch sojourn-time distribution
of batch~$\boldsymbol{k}$ conditioned that the server is in switch-over
period $S_{j-1}$ and the last customer in the batch will be served
in $Q_{i}$\textbf{ }is given by
\begin{align}
\tilde{T}_{\boldsymbol{k}}^{\left(S_{j-1}\right)}\left(\omega\right) & =\widetilde{L}^{\left(S_{j-1}\right)}\left(\boldsymbol{B_{j,i}^{*}},\,\omega+\lambda(1-\widetilde{K}(\boldsymbol{B_{j,i-1}}))\right)\sideset{}{'}\prod_{l=1}^{i-j}\widetilde{S}_{j+l-1,i-1}\left(\omega\right)\sideset{}{'}\prod_{l=j}^{i}(\boldsymbol{B_{j,i}^{*}})_{l}^{k_{l}}.\label{eq:lst-ga-switch}
\end{align}
\end{prop}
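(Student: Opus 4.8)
The plan is to follow the proof of Proposition~\ref{prop:lst-visit-ex-1} almost verbatim. A batch that arrives during a switch-over period $S_{j-1}$ finds no queue currently under service and hence no active gate, so locally-gated service differs from exhaustive service only through the meaning of the branching quantities: $B_{j,i}$ is now built from a single service time $B_j$ rather than from a busy period $BP_j$, the starred vector is $\boldsymbol{B_{j,i}^{*}}$ of \eqref{eq:decend-alt-1}, and $\widetilde{L}^{(S_{j-1})}(\boldsymbol{z},\omega)$ is the locally-gated modification of \eqref{eq:lsjzw-ex} displayed just above the statement. First I would condition on the numbers $n_1,\dots,n_N$ of customers present in the system at the arrival epoch of batch~$\boldsymbol{k}\in\mathcal{K}_{j,i}$, treating the generic case $i\neq j$ and handling $i=j$ at the end.

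Next I would assemble the conditional LST by splitting $T_{\boldsymbol{k}}^{(S_{j-1})}$ into: (i) the residual part of $S_{j-1}$, during which fresh arrivals to $Q_j,\dots,Q_{i-1}$ are still served before the server reaches $Q_i$, each spawning a locally-gated descendant chain, contributing $\widetilde{S}_{j-1}^{R}\bigl(\omega+\lambda(1-\widetilde{K}(\boldsymbol{B_{j,i-1}}))\bigr)$; (ii)--(v) each of the $n_l+k_l$ customers standing in $Q_l$ for $l=j,\dots,i-1$ contributing $\widetilde{B}_{l,i-1}(\omega)$ (its own service in $V_l$ together with the services it triggers in $Q_{l+1},\dots,Q_{i-1}$), the switch-over times $S_j,\dots,S_{i-1}$ contributing $\widetilde{S}_{l,i-1}(\omega)$, and the $n_i+k_i$ customers in $Q_i$ each contributing only $\widetilde{B}_i(\omega)$ because the arrivals their services generate are placed before $Q_i$'s gate or fall in queues served only in the next cycle, hence after the last batch customer. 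This gives
\[
E\bigl(e^{-\omega T_{\boldsymbol{k}}^{(S_{j-1})}}\mid n_1,\dots,n_N\bigr)=\widetilde{S}_{j-1}^{R}\bigl(\omega+\lambda(1-\widetilde{K}(\boldsymbol{B_{j,i-1}}))\bigr)\sideset{}{'}\prod_{l=j}^{i-1}\widetilde{B}_{l,i-1}(\omega)^{n_l+k_l}\sideset{}{'}\prod_{l=j}^{i-1}\widetilde{S}_{l,i-1}(\omega)\,\widetilde{B}_i(\omega)^{n_i+k_i},
\]
which has exactly the same form as in the exhaustive case, only with the locally-gated $\widetilde{B}_{l,i-1}$.

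Finally I would uncondition on $n_1,\dots,n_N$. After factoring out the batch-dependent product $\sideset{}{'}\prod_{l=j}^{i}(\boldsymbol{B_{j,i}^{*}})_l^{k_l}$ and the deterministic product $\sideset{}{'}\prod_{l=1}^{i-j}\widetilde{S}_{j+l-1,i-1}(\omega)$, the remaining sum over $n_1,\dots,n_N$ against the stationary joint queue-length distribution at a switch-over beginning, together with the arrivals during the residual switch-over, is by definition $\widetilde{L}^{(S_{j-1})}(\boldsymbol{z},\omega)$ evaluated at the $(N+1)$-vector whose $l$-th entry is $\widetilde{B}_{l,i-1}(\omega)$ for $l=j,\dots,i-1$, is $\widetilde{B}_i(\omega)$ for $l=i$, is $1$ for $l=N+1$ (no customer stands before a gate during a switch-over), and is $1$ otherwise --- precisely $\boldsymbol{B_{j,i}^{*}}$ of \eqref{eq:decend-alt-1} --- with residual-time argument $\omega+\lambda(1-\widetilde{K}(\boldsymbol{B_{j,i-1}}))$, which yields \eqref{eq:lst-ga-switch}. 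For $i=j$ the same computation applies: $\boldsymbol{B_{j,j-1}}=(1,\dots,1)$, the switch-over product is empty, and the right-hand side collapses to $\widetilde{L}^{(S_{j-1})}(\boldsymbol{B_{j,j}^{*}},\omega)\,\widetilde{B}_j(\omega)^{k_j}$, i.e.\ the residual switch-over followed by the services of the $n_j+k_j$ customers standing in $Q_j$. The only delicate point --- and hence the main obstacle --- is the bookkeeping of which fresh arrivals (during the residual switch-over and during the intervening visits and switch-overs) are served before rather than after the last batch customer in $Q_i$; this is exactly what the ``$i-1$'' truncation in $\boldsymbol{B_{j,i-1}}$ and the starred vector $\boldsymbol{B_{j,i}^{*}}$ encode, and once it is settled the unconditioning is routine and formally identical to that of Proposition~\ref{prop:lst-visit-ex-1}.
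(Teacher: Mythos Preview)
Your proposal is correct and follows exactly the approach the paper takes: the paper's own proof consists of a single sentence referring back to Proposition~\ref{prop:lst-visit-ex-1}, and you have spelled out precisely those steps, correctly noting that during a switch-over no gate is active so the only changes from the exhaustive case are the locally-gated descendant quantities $\widetilde{B}_{l,i-1}$ and the $(N{+}1)$-dimensional starred vector with last entry $1$. Your conditional LST and unconditioning are the same as in the exhaustive proof, which is all the paper requires.
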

\begin{proof}
Similarly, the same steps as in the proof of \ref{prop:lst-visit-ex-1}
can be used to derive \eqref{eq:lst-ga-switch}.
\end{proof}
From \ref{prop:lst-ga-vj} and \ref{prop:lst-ga-sj}, it can be seen
that the LST of the batch sojourn-time distribution of batch~$\boldsymbol{k}$
conditioned on a visit/switch-over period can be decomposed into two
terms;{\allowdisplaybreaks 
\begin{align}
\widetilde{T}_{\boldsymbol{k}}^{\left(V_{j-1}\right)}\left(\omega\right) & =\sum_{i=1}^{N}1_{\left(\boldsymbol{k}\in\mathcal{K}_{j,i}\right)}\widetilde{W}_{i}^{\left(V_{j-1}\right)}\left(\omega\right)\sideset{}{'}\prod_{l=j}^{i}(\boldsymbol{B_{j,i}^{*}})_{l}^{k_{l}},\label{eq:lst-vj-ga}\\
\widetilde{T}_{\boldsymbol{k}}^{\left(S_{j-1}\right)}\left(\omega\right) & =\sum_{i=1}^{N}1_{\left(\boldsymbol{k}\in\mathcal{K}_{j,i}\right)}\widetilde{W}_{i}^{\left(S_{j-1}\right)}\left(\omega\right)\sideset{}{'}\prod_{l=j}^{i}(\boldsymbol{B_{j,i}^{*}})_{l}^{k_{l}},\label{eq:lst-sj-ga}
\end{align}
where $\widetilde{W}_{i}^{\left(V_{j-1}\right)}\left(\omega\right)$
and $\widetilde{W}_{i}^{\left(S_{j-1}\right)}\left(\omega\right)$
can be considered as the time between the batch arrival epoch and
the service completion of the last customer in $Q_{i}$ that is already
in the system, excluding any arrivals to $Q_{i}$ after the arrival
epoch and contribution of the batch. }

The LST of the batch sojourn-time distribution of a specific customer
batch~$\boldsymbol{k}$ can now be calculated using \eqref{eq:batch-sojourn-ex-specific}
or alternatively by \eqref{eq:batch-sojourn-ex-specific},
\begin{multline}
\widetilde{T}_{\boldsymbol{k}}\left(\omega\right)=\frac{1}{E\left(C\right)}\sum_{j=1}^{N}\sum_{i=1}^{N}1_{\left(\boldsymbol{k}\in\mathcal{K}_{j,i}\right)}\left(E\left(V_{j-1}\right)\widetilde{W}_{i}^{\left(V_{j-1}\right)}\left(\omega\right)+E\left(S_{j-1}\right)\widetilde{W}_{i}^{\left(S_{j-1}\right)}\left(\omega\right)\right)\\
\times\sideset{}{'}\prod_{l=j}^{i}(\boldsymbol{B_{j,i}^{*}})_{l}^{k_{l}}.\label{eq:batch-sojourn-specific-alt-1}
\end{multline}

Finally, we focus on the LST of the batch sojourn-time of an arbitrary
batch $\widetilde{T}\left(.\right)$. 
\begin{thm}
The LST of the batch sojourn-time distribution of an arbitrary batch
$\widetilde{T}\left(\boldsymbol{.}\right)$, if this queue receives
locally-gated service, is given by:
\begin{align}
\widetilde{T}\left(\omega\right) & =\sum_{\boldsymbol{k}\in\mathcal{K}}\pi\left(\boldsymbol{k}\right)\widetilde{T}_{\boldsymbol{k}}\left(\omega\right),\label{eq:batch-sojourn-ex-ab-2}
\end{align}
where $\widetilde{T}_{\boldsymbol{k}}\left(\omega\right)$ is given
by \eqref{eq:batch-sojourn-ex-specific} or \eqref{eq:batch-sojourn-specific-alt-1}.
Alternatively, we can write \eqref{eq:batch-sojourn-ex-ab-2} as,
\begin{multline}
\widetilde{T}\left(\omega\right)=\frac{1}{E\left(C\right)}\sum_{j=1}^{N}\sum_{i=1}^{N}\left(E\left(V_{j-1}\right)\widetilde{W}_{i}^{\left(V_{j-1}\right)}\left(\omega\right)+E\left(S_{j-1}\right)\widetilde{W}_{i}^{\left(S_{j-1}\right)}\left(\omega\right)\right)\\
\times\pi\left(\mathcal{K}_{j,i}\right)\widetilde{K}\left(\boldsymbol{B_{j,i}^{*}}|\mathcal{K}_{j,i}\right).\label{eq:batch-sojourn-ex-alt-1}
\end{multline}
\end{thm}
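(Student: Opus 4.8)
The plan is to follow the proof of Theorem~\ref{thm:batch-sojourn-ex} almost verbatim, since the locally-gated statement is its exact structural analogue. The two differences to keep track of are purely bookkeeping: the conditioning on the server's position is now over the pairs $\left(V_{j-1},S_{j-1}\right)$ rather than $\left(S_{j-1},V_{j}\right)$ (because under locally-gated service a batch arriving in $V_{j-1}$ has the gate of $Q_{j-1}$ closed in front of its $Q_{j-1}$-customers, so it completes service in exactly the same queue as a batch arriving in $S_{j-1}$), and the before-/behind-the-gate accounting has already been packaged into the vectors $\boldsymbol{B_{j-1,i}^{G}}$, $\boldsymbol{B_{j,i}^{*}}$ and hence into $\widetilde{W}_{i}^{\left(V_{j-1}\right)}$, $\widetilde{W}_{i}^{\left(S_{j-1}\right)}$ via Propositions~\ref{prop:lst-ga-vj}--\ref{prop:lst-ga-sj}.

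First I would dispatch \eqref{eq:batch-sojourn-ex-ab-2} by the law of total probability: batch arrivals form a Poisson process, so by PASTA an arriving batch sees the time-stationary state; conditioning on which realization $\boldsymbol{k}\in\mathcal{K}$ the batch is, and averaging with weights $\pi\left(\boldsymbol{k}\right)$, gives $\widetilde{T}\left(\omega\right)=\sum_{\boldsymbol{k}\in\mathcal{K}}\pi\left(\boldsymbol{k}\right)\widetilde{T}_{\boldsymbol{k}}\left(\omega\right)$, where $\widetilde{T}_{\boldsymbol{k}}\left(\omega\right)$ is the quantity of \eqref{eq:batch-sojourn-ex-specific}/\eqref{eq:batch-sojourn-specific-alt-1} (again by PASTA, conditioning on the server's position with weights $E\left(V_{j-1}\right)/E\left(C\right)$ and $E\left(S_{j-1}\right)/E\left(C\right)$).

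For the alternative form \eqref{eq:batch-sojourn-ex-alt-1}, I would substitute \eqref{eq:batch-sojourn-specific-alt-1} into \eqref{eq:batch-sojourn-ex-ab-2}, exchange the order of summation to bring the sums over $j$ and $i$ outside, and use that for every fixed $j$ the sets $\left\{\mathcal{K}_{j,i}\right\}_{i=1}^{N}$ partition $\mathcal{K}$, so each indicator $1_{\left(\boldsymbol{k}\in\mathcal{K}_{j,i}\right)}$ simply restricts the inner sum to $\boldsymbol{k}\in\mathcal{K}_{j,i}$. The decompositions \eqref{eq:lst-vj-ga}--\eqref{eq:lst-sj-ga} show that on $\mathcal{K}_{j,i}$ all dependence on the specific realization $\boldsymbol{k}$ is carried by the explicit factor $\sideset{}{'}\prod_{l=j}^{i}\left(\boldsymbol{B_{j,i}^{*}}\right)_{l}^{k_{l}}$, while $\widetilde{W}_{i}^{\left(V_{j-1}\right)}\left(\omega\right)$, $\widetilde{W}_{i}^{\left(S_{j-1}\right)}\left(\omega\right)$, and the vector $\boldsymbol{B_{j,i}^{*}}$ depend only on the pair $\left(i,j\right)$. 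Hence the inner sum is
\[
\sum_{\boldsymbol{k}\in\mathcal{K}_{j,i}}\pi\left(\boldsymbol{k}\right)\Bigl(E\left(V_{j-1}\right)\widetilde{W}_{i}^{\left(V_{j-1}\right)}\left(\omega\right)+E\left(S_{j-1}\right)\widetilde{W}_{i}^{\left(S_{j-1}\right)}\left(\omega\right)\Bigr)\sideset{}{'}\prod_{l=j}^{i}\left(\boldsymbol{B_{j,i}^{*}}\right)_{l}^{k_{l}},
\]
and pulling the $\boldsymbol{k}$-independent bracket out leaves $\sum_{\boldsymbol{k}\in\mathcal{K}_{j,i}}\pi\left(\boldsymbol{k}\right)\sideset{}{'}\prod_{l=j}^{i}\left(\boldsymbol{B_{j,i}^{*}}\right)_{l}^{k_{l}}=\pi\left(\mathcal{K}_{j,i}\right)\widetilde{K}\left(\boldsymbol{B_{j,i}^{*}}\,|\,\mathcal{K}_{j,i}\right)$ by definition \eqref{eq:lst-ex-batch}. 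Summing over $i$ and $j$ and dividing by $E\left(C\right)$ gives \eqref{eq:batch-sojourn-ex-alt-1}.

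The main obstacle, as in the exhaustive case, is not the algebra but confirming that the $\widetilde{W}$-terms carry no residual dependence on $\boldsymbol{k}$ — i.e. that Propositions~\ref{prop:lst-ga-vj}--\ref{prop:lst-ga-sj} genuinely concentrate all batch-dependence in the factor $\sideset{}{'}\prod_{l=j}^{i}\left(\boldsymbol{B_{j,i}^{*}}\right)_{l}^{k_{l}}$. The delicate sub-case is $i=j-1$, where the last customer of the batch sits before the gate of the currently-visited queue and is therefore served a full cycle later; this is already absorbed into the definition of $\boldsymbol{B_{i,i}^{G}}$ and the corresponding $\widetilde{W}_{i}^{\left(V_{j-1}\right)}$, so no separate treatment is needed here. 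Everything else is identical to the proof of Theorem~\ref{thm:batch-sojourn-ex}.
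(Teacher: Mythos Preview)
Your proposal is correct and follows essentially the same approach as the paper, which simply states that ``using the definition of $\mathcal{K}_{j,i}$, the proof is almost identical to the one of Theorem~\ref{thm:batch-sojourn-ex}.'' In fact you supply considerably more detail than the paper does, including the explicit handling of the $i=j-1$ sub-case, but the underlying argument---partition $\mathcal{K}$ into the $\mathcal{K}_{j,i}$, pull out the $\boldsymbol{k}$-independent $\widetilde{W}$-factors via the decompositions \eqref{eq:lst-vj-ga}--\eqref{eq:lst-sj-ga}, and recognize the remaining sum as $\pi(\mathcal{K}_{j,i})\widetilde{K}(\boldsymbol{B_{j,i}^{*}}\mid\mathcal{K}_{j,i})$---is identical.
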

\begin{proof}
Using the definition of $\mathcal{K}_{j,i}$, the proof is almost
identical to the one of \ref{thm:batch-sojourn-ex}.
\end{proof}

\subsection{Mean value analysis\label{subsec:Mean-value-analysis-gated}}

In this section, we will use MVA again to derive the mean batch sojourn-time
of a specific batch and an arbitrary batch. Central in the MVA for
locally-gated service is $E\left(\bar{L}_{i}^{\left(V_{j},S_{j}\right)}\right)$,
the mean queue-length at $Q_{i}$ (excluding the potential customer
currently in service) at an arbitrary epoch within visit period $V_{j}$
and switch-over period $S_{j}$. First, for notation purposes we introduce
$\theta_{j}$ as shorthand for intervisit period $\left(V_{j},S_{j}\right)$;
the expected duration of this period $E\left(\theta_{j}\right)$ is
given by, 
\begin{align}
E\left(\theta_{j}\right) & =E\left(V_{j}\right)+E\left(S_{j}\right),\quad j=1,\dots,N.\label{eq:mva-visit-ga-1}
\end{align}
 The big difference with \ref{subsec:Mean-value-analysis-ex} is that
we know have to consider the customers that stand before the gate
and those who stand behind. For this we introduce variables $E\left(\tilde{L}_{i}^{\left(\theta_{j}\right)}\right)$
as the expected number of customers standing before the gate the gate
in $Q_{i}$ during intervisit period $\theta_{j}$ and $E\left(\hat{L}_{i}^{\left(\theta_{i}\right)}\right)$
as the expected number of customers standing behind the gate the gate
in $Q_{i}$ during intervisit period $\theta_{i}$. In MVA customers
all incoming customers are placed before the gate, and only placed
behind the gate when a visit period begins. Note this is a slight
difference with \ref{subsec:The-joint-queue-length-gated} where only
customers arriving to the same queue that is being visited are placed
before the gate. Then the mean queue-length in $Q_{i}$, $E\left(\bar{L}_{i}^{\left(\theta_{j}\right)}\right)$,
given that the server is not in intervisit period $\theta_{i}$, i.e.
$i\neq j$, is equal to the mean number of customers standing before
the gate $E\left(\tilde{L}_{i}^{\left(\theta_{j}\right)}\right)$.
Otherwise, when $i=j$ the mean queue length in $Q_{i}$ is the sum
of the number of customers standing in front and behind the gate.
Thus we can write $E\left(\bar{L}_{i}^{\left(\theta_{j}\right)}\right)$
as, 
\[
E\left(\bar{L}_{i}^{\left(\theta_{j}\right)}\right)=\begin{cases}
E\left(\tilde{L}_{i}^{\left(\theta_{j}\right)}\right)+E\left(\hat{L}_{i}^{\left(\theta_{i}\right)}\right), & i=j,\\
E\left(\tilde{L}_{i}^{\left(\theta_{j}\right)}\right), & \mbox{otherwise}.
\end{cases}
\]
Subsequently, the mean queue-length in $Q_{i}$ is given by, 
\begin{align}
E\left(\bar{L}_{i}\right) & =\sum_{j=1}^{N}\frac{E\left(\theta_{j}\right)}{E\left(C\right)}E\left(\tilde{L}_{i}^{\left(\theta_{j}\right)}\right)+\frac{E\left(\theta_{i}\right)}{E\left(C\right)}E\left(\hat{L}_{i}^{\left(\theta_{i}\right)}\right),\quad i=1,\dots,N.\label{eq:Mean-period-queue-length-2}
\end{align}
We denote by $E\left(B_{j,i}\right)$ as the the mean duration a service
time $B_{j}$ and its descendants before the server starts service
in $Q_{i}$ given that the server is currently in $Q_{j}$. Let $E\left(B_{j,j+1}\right)=E\left(B_{j}\right)$
be the expectation of $B_{j}$ and $E\left(B_{j,j+2}\right)=E\left(B_{j}\right)\left(1+\rho_{j+1}\right)$
be the sum of the service time $B_{j}$ and the service of all the
customers that arrive in $Q_{j+1}$ during this service. In general
we can write $E\left(B_{j,i}\right)$ for $i\neq j+1$ as,
\begin{align}
E\left(B_{j,i}\right) & =E\left(B_{j}\right)\sideset{}{'}\prod_{l=j+1}^{i-1}\left(1+\rho_{l}\right),\quad i=1,\dots,N,\,j=1,\dots,N.\label{eq:decen-var-ex-2}
\end{align}

Finally, $E\left(S_{j,i}\right)$, $E\left(B_{j,i}^{R}\right)$, and
$E\left(S_{j,i}^{R}\right)$ are given by $E\left(B_{j,i}\right)$
and replacing $E\left(B_{j}\right)$ with $E\left(S_{j}\right)$,
$E\left(B_{j}^{R}\right)$, and $E\left(S_{j}^{R}\right)$ respectively.

Again, we consider the waiting time $E\left(W_{i}\right)$ of an arbitrary
customer and make extensively use of Little\textquoteright s Law and
the PASTA property. When the customer enters the system at $Q_{i}$,
it has to wait for the next visit to $Q_{i}$. Even if the customer
enters the system while the server is in intervisit period $\theta_{i}$,
the customer is placed before the gate and will only be served when
the server returns to this queue in the next cycle. The average duration
of the server returning to $Q_{i}$ equals $E\left(\theta_{i,i-1}^{R}\right)$.
Then at $Q_{i}$, the customer first has to wait for the service of
the average number of customers $E\left(\tilde{L}_{i}\right)=\sum_{j=1}^{N}E\left(\theta_{j}\right)/E\left(C\right)E\left(\tilde{L}_{i}^{\left(\theta_{j}\right)}\right)$
that are in front of the customer when it arrived in the system, as
well as, the service of $E\left(K_{ii}\right)/2E\left(K_{i}\right)$
customers that arrived in the same customer batch, but are placed
before the arbitrary customer in $Q_{i}$. This gives the following
expression for the mean waiting time $E\left(W_{i}\right)$, 
\begin{align}
E\left(W_{i}\right) & =E\left(\tilde{L}_{i}\right)E\left(B_{i}\right)+\frac{E\left(K_{ii}\right)}{2E\left(K_{i}\right)}E\left(B_{i}\right)+E\left(\theta_{i,i-1}^{R}\right),\label{eq:mva-w-ga}
\end{align}
 Applying Little's law gives,
\begin{align}
E\left(\bar{L}_{i}\right) & =\rho_{i}E\left(\tilde{L}_{i}\right)+\rho_{i}\frac{E\left(K_{ii}\right)}{2E\left(K_{i}\right)}+\lambda_{i}E\left(\theta_{i,i-1}^{R}\right).\label{eq:mva-l-ga}
\end{align}

The next step is to derive the equations is to relate unknowns $E\left(\theta_{i,i-1}^{R}\right)$
to $E\left(\tilde{L}_{i}^{\left(\theta_{j}\right)}\right)$ and $E\left(\hat{L}_{i}^{\left(\theta_{i}\right)}\right)$.
Consider $E\left(\theta_{j,i}^{R}\right)$ the expected residual duration
of an intervisit period starting in $\theta_{j}$ and ending in $\theta_{i}$
given that an arbitrary customer batch just entered the system. Then
with probability $E\left(\theta_{l}\right)/E\left(\theta_{j,i}\right)$,
the server is during this period in intervisit period $\theta_{l}$,
$l=j,\dots,i$, and the expected residual duration until the intervisit
ending of $\theta_{i}$, conditioned that the server is in intervisit
period $\theta_{l}$, is defined as follows. First, with probability
$E\left(V_{l}\right)/E\left(\theta_{l}\right)$ the customer has to
wait for the server serving a customer in $Q_{l}$ and switch-over
period $S_{l}$ and with probability $E\left(S_{l}\right)/E\left(C\right)$
the customer has to wait for a residual switch-over period in $S_{l}$.
Also, $E\left(\hat{L}_{l}^{\left(\theta_{j}\right)}\right)$ customers
are standing behind the gate in $Q_{l}$ that need to be served. During
this period new descendants can arrive in the system that will be
served before the intervisit ending in $\theta_{j}$. In addition,
for each queue $Q_{n}$, $n=j+1,\dots,i$, the expected number of
customers in the $Q_{n}$ given that the server is in $\theta_{l}$,
$E\left(\tilde{L}_{n}^{\left(\theta_{l}\right)}\right)$, and the
expected number of customers that arrived in $Q_{n}$ in the arbitrary
customer batch $E\left(K_{nl}\right)/E\left(K_{n}\right)$ will increase
the duration of $E\left(\theta_{j,i}^{R}\right)$ by $E\left(B_{n,i+1}\right)$.
Finally, the switch-over times between $Q_{n}$ to $Q_{n+1}$ plus
all its descendants that will be served before the end of the period
contribute with $E\left(S_{n,i+1}\right)$. Combining this gives the
following expression,{\allowdisplaybreaks
\begin{multline}
E\left(\theta_{j,i}^{R}\right)=\sideset{}{'}\sum_{l=j}^{i}\frac{E\left(\theta_{l}\right)}{E\left(\theta_{j,i}\right)}\biggl(\frac{E\left(V_{l}\right)}{E\left(\theta_{l}\right)}\left(E\left(B_{l,i+1}^{R}\right)+E\left(S_{l,i+1}\right)\right)+\frac{E\left(S_{l}\right)}{E\left(\theta_{l}\right)}E\left(S_{l,i+1}^{R}\right)\\
+E\left(\hat{L}_{l}^{\left(\theta_{l}\right)}\right)E\left(B_{l,i+1}\right)+\sideset{}{'}\sum_{n=1}^{i-l}\left(\frac{E\left(K_{l+n,l}\right)}{E\left(K_{l+n}\right)}+E\left(\tilde{L}_{l+n}^{\left(\theta_{l}\right)}\right)\right)E\left(B_{l+n,i+1}\right)+E\left(S_{l+n,i+1}\right)\biggr).
\end{multline}
}It is now possible to set up a set of $N\left(N+1\right)$ linear
equations in terms of unknowns $E\left(\tilde{L}_{i}^{\left(\theta_{j}\right)}\right)$
and $E\left(\hat{L}_{i}^{\left(\theta_{i}\right)}\right)$. First,
the number of customers in $Q_{i}$ before the gate given an arbitrary
moment in an intervisit period starting in $\theta_{i}$ and ending
in $\theta_{j}$ equals the number of Poisson arrivals during the
age of this period. Since the age is in distribution equal to the
residual time, the following equation holds, $i=1,\dots,N,\,j=1,\dots,N$,
\begin{align}
\sideset{}{'}\sum_{l=i}^{j}\frac{E\left(\theta_{l}\right)}{E\left(\theta_{i,j}\right)}E\left(\tilde{L}_{i}^{\left(\theta_{l}\right)}\right) & =\lambda_{i}E\left(\theta_{i,j}^{R}\right).\label{eq:mva-lin-eq-1-2}
\end{align}
Second, by \eqref{eq:mva-w-ga} and using Little's Law $\lambda_{i}E\left(W_{i}\right)=E\left(\bar{L}_{i}\right)$
into \eqref{eq:mva-l-ga} gives, for $i=1,2\dots,N$,
\begin{align}
\left(1-\rho_{i}\right)\sum_{j=1}^{N}\frac{E\left(\theta_{j}\right)}{E\left(C\right)}E\left(\tilde{L}_{i}^{\left(\theta_{j}\right)}\right)+\frac{E\left(\theta_{i}\right)}{E\left(C\right)}E\left(\hat{L}_{i}^{\left(\theta_{i}\right)}\right)-\rho_{i}\frac{E\left(K_{ii}\right)}{2E\left(K_{i}\right)} & =\lambda_{i}E\left(\theta_{i,i-1}^{R}\right).\label{eq:mva-lin-eq-2-2}
\end{align}
With \eqref{eq:mva-lin-eq-1-2} and \eqref{eq:mva-lin-eq-2-2} a set
of $N\left(N+1\right)$ linear equations are now defined. Solving
the set of linear equations and by \eqref{eq:mva-l-ga} and \eqref{eq:mva-w-ga}
will give the expected queue-lengths and waiting times.

It is now possible to derive the mean batch time $E\left(T_{\boldsymbol{k}}\right)$
of customer batch $\boldsymbol{k}$ using \eqref{eq:ex-thr-ex-sp}.
For this we need to calculate $E\left(T_{\boldsymbol{k}}^{\left(\theta_{j-1}\right)}\right)$.
When customer batch $\boldsymbol{k}$ enters the system and the server
is in intervisit period $\theta_{j-1}$, then with probability $E\left(V_{j-1}\right)/E\left(\theta_{j-1}\right)$
and $E\left(S_{j-1}\right)/E\left(\theta_{j-1}\right)$ the arriving
customer batch has to wait for the residual service and a switch-over
or a residual switch-over time during in which new customer can arrive
that will be served before the visit completion in $Q_{i-1}$. Then
each customer already in the system and in batch~$\boldsymbol{k}$
in $Q_{l}$, $l=j-1,\dots,i$ and their descendants will increase
the batch sojourn-time. Finally, the batch also has to wait for all
the switch-over times between $Q_{j}$ to $Q_{i-1}$ and all their
descendants that will be served before the server reaches $Q_{i}$.
This gives the following expression,
\begin{multline}
E\left(T_{\boldsymbol{k}}^{\left(\theta_{j-1}\right)}\right)=\frac{E\left(V_{j-1}\right)}{E\left(\theta_{j-1}\right)}\left(E\left(B_{j-1,i}^{R}\right)+E\left(S_{j-1,i}\right)\right)+\frac{E\left(S_{j-1}\right)}{E\left(\theta_{j-1}\right)}E\left(S_{j-1,i}^{R}\right)\\
+E\left(\hat{L}_{j-1}^{\left(\theta_{j-1}\right)}\right)E\left(B_{j-1,i}\right)+\sideset{}{'}\sum_{l=1}^{i-j}\left(E\left(\tilde{L}_{j+l-1}^{\left(\theta_{j-1}\right)}\right)+k_{j+l-1}\right)E\left(B_{j+l-1,i}\right)\\
+E\left(S_{j+l-1,i}\right)+\left(\left(\tilde{L}_{i}^{\left(\theta_{j-1}\right)}\right)+k_{i}\right)E\left(B_{i}\right),
\end{multline}
 Notice that the same decomposition as \eqref{eq:lst-vj-ex} and
\eqref{eq:lst-sj-ex} also holds for the expected batch sojourn-time,
\begin{align}
E\left(T_{\boldsymbol{k}}^{\left(\theta_{j-1}\right)}\right) & =E\left(W_{i}^{\left(\theta_{j-1}\right)}\right)+\sideset{}{'}\sum_{l=1}^{i-j}k_{j+l-1}E\left(B_{j+l-1,i}\right)+k_{i}E\left(B_{i}\right),\label{eq:thr-ga-theta}
\end{align}
where $E\left(W_{i}^{\left(\theta_{j-1}\right)}\right)$ is the expected
time between the batch arrival epoch and the service completion of
the last customer in $Q_{i}$ that is already in the system, excluding
any arrivals to $Q_{i}$ after the arrival epoch.

\enlargethispage{-4\baselineskip}Finally, the expected batch sojourn-time
of an arbitrary customer batch is given by \eqref{eq:ex-sojourn-batch}.
Similarly, we can rewrite \eqref{eq:ex-sojourn-batch} by taking the
expectation of $\mathcal{K}_{j,i}$ and using \eqref{eq:thr-ga-theta},
\begin{multline*}
E\left(T\right)=\frac{1}{E\left(C\right)}\sum_{j=1}^{N}\sum_{i=1}^{N}E\left(\theta_{j}\right)\pi\left(\mathcal{K}_{j,i}\right)(E\left(W_{i}^{\left(\theta_{j-1}\right)}\right)\\
+\sideset{}{'}\sum_{l=1}^{i-j}E\left(K_{j+l-1}|\mathcal{K}_{j,i}\right)E\left(B_{j+l-1,i}\right)+E\left(K_{i}|\mathcal{K}_{j,i}\right)E\left(B_{i}\right)).
\end{multline*}

\section{Globally-gated service\label{sec:Globally-gated-service}}

In this section the batch sojourn distribution under globally-gated
service is studied in \ref{subsec:Batch-sojourn-distribution-gg},
and the mean batch sojourn-times in \ref{subsec:Mean-batch-sojourn-gg}.

\subsection{Batch sojourn distribution\label{subsec:Batch-sojourn-distribution-gg} }

Under the globally-gated service discipline all the customers that
were present at the visit beginning of reference queue $Q_{1}$ will
be served during the coming cycle. Meanwhile, customers that arrive
in the system during this cycle have to wait and will be served in
the next cycle. The advantage of the globally-gated service discipline
is that closed-form expressions can be easily be derived for the delay
distribution compared to exhaustive and locally-gated \cite{Boxma1992}. 

Let random variables $n_{1},\dots n_{N}$ denote the number of customers
in the queues at the beginning of an arbitrary cycle $C$ and let
$\widetilde{C}\left(\omega\right)=E\left(e^{-\omega C}\right)$ be
its LST. Then, the length of the current cycle will equal the sum
of all switch-over times and the total sum of all the service times
of the customers present at the beginning of the cycle. Combining
this gives,
\begin{align}
E\left(e^{-\omega C}|n_{1},\dots,n_{N}\right) & =\widetilde{S}\left(\omega\right)\prod_{j=1}^{N}\widetilde{B}_{j}^{n_{j}}\left(\omega\right),\label{eq:LST-CondCycle-GG}
\end{align}
where $\widetilde{S}\left(\omega\right)=\prod_{j=1}^{N}\widetilde{S}_{j}\left(\omega\right)$. 

On the other hand, the length of a cycle determines the joint queue-length
distribution at the beginning of the next cycle \cite{Boxma1992},
\begin{align}
E\left(z_{1}^{n_{1}}\dotsm z_{N}^{n_{N}}\right) & =E\left(E\left(z_{1}^{n_{1}}\dotsm z_{N}^{n_{N}}|C=t\right)\right)\nonumber \\
 & =E\left(\exp\left(-\left(\lambda-\lambda\widetilde{K}\left(\boldsymbol{z}\right)\right)t\right)\right)=\widetilde{C}\left(\lambda-\lambda\widetilde{K}\left(\boldsymbol{z}\right)\right).\label{eq:LST-CondCycleLength-GG}
\end{align}
With use of \eqref{eq:LST-CondCycle-GG} and \eqref{eq:LST-CondCycleLength-GG},
we have
\begin{align}
\widetilde{C}\left(\omega\right) & =\widetilde{S}\left(\omega\right)E\left(\widetilde{B}_{1}^{n_{1}}\left(\omega\right)\dotsm\widetilde{B}_{N}^{n_{N}}\left(\omega\right)\right)\nonumber \\
 & =\widetilde{S}\left(\omega\right)\widetilde{C}\left(\lambda-\lambda K\left(\widetilde{B}_{1}\left(\omega\right),\dots,\widetilde{B}_{N}\left(\omega\right)\right)\right).\label{eq:LST-Cycle-GG}
\end{align}
Let $C_{P}$ and $C_{R}$ be the past and residual time, respectively,
of a cycle. We can write the LST of the joint distribution of $C^{P}$
and $C^{R}$ as \cite{Cohen1982},
\begin{align}
\widetilde{C}^{PR}\left(\omega_{P},\omega_{R}\right) & =\frac{\widetilde{C}\left(\omega_{R}\right)-\widetilde{C}\left(\omega_{P}\right)}{E\left(C\right)\left(\omega_{P}-\omega_{R}\right)},\label{eq:gg-cp-cr}
\end{align}
and
\begin{align}
\widetilde{C}^{P}\left(\omega_{R}\right) & =\widetilde{C}^{R}\left(\omega_{P}\right)=\frac{1-\widetilde{C}\left(\omega\right)}{\omega E\left(C\right)}.
\end{align}
Finally, let $\boldsymbol{B_{j,i}}$ be an $N$-dimensional vector
with the LST of the service times of $Q_{l}$ on elements $l=j,\dots,i$,
\[
\boldsymbol{B_{j,i}}=\left(1,\dots,\widetilde{B}_{j}\left(\omega\right),\widetilde{B}_{j+1}\left(\omega\right),\dots,\widetilde{B}_{i}\left(\omega\right),1,\dots,1\right).
\]
With the previous results, we can now derive the LST of the batch
sojourn distribution of specific batch of customers.
\begin{prop}
The LST of the batch sojourn-time distribution of batch~$\boldsymbol{k}$
is given by,
\begin{multline}
\widetilde{T}_{\boldsymbol{k}}\left(\omega\right)=\frac{1}{E\left(C\right)}\left[\frac{\widetilde{C}\left(\lambda-\lambda\widetilde{K}\left(\boldsymbol{B_{1,i}}\right)\right)-\widetilde{C}\left(\lambda-\lambda\widetilde{K}\left(\boldsymbol{B_{1,i-1}}\right)+\omega\right)}{\omega-\lambda\left(1-\widetilde{K}\left(\boldsymbol{B_{i,i}}\right)\right)}\right]\prod_{j=1}^{i-1}\widetilde{S}_{j}\left(\omega\right)\\
\times\prod_{j=1}^{i}k_{i}\widetilde{B}_{j}\left(\omega\right).\label{eq:gg-lst-batch-arb}
\end{multline}
\end{prop}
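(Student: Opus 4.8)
The plan is to condition on where, within its cycle, the tagged batch arrives, decompose the resulting batch sojourn-time into conditionally independent pieces, and reassemble the transform using the cycle-time identities \eqref{eq:LST-CondCycle-GG}--\eqref{eq:gg-cp-cr}. Write $i$ for the index of the last nonempty queue of $\boldsymbol{k}$ (cyclically from $Q_1$), so that $k_i>0$ and $k_l=0$ for $l>i$. Under globally-gated service every customer of $\boldsymbol{k}$ is served in the cycle \emph{after} the one in which $\boldsymbol{k}$ arrives, and that cycle visits the queues in the order $Q_1,\dots,Q_N$, so the last customer of $\boldsymbol{k}$ is served in $Q_i$.

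Let $C$ be the cycle during which $\boldsymbol{k}$ arrives and split it into the part $C_P$ preceding the arrival and the residual part $C_R$ following it. Since $\boldsymbol{k}$ is a single point of the within-cycle Poisson($\lambda$) batch-arrival stream, and --- crucially for globally-gated service --- the within-cycle arrivals do not influence the length of that cycle, the cycle containing a Poisson arrival is length-biased and the arrival is uniformly placed inside it; hence $(C_P,C_R)$ has exactly the law with bivariate LST $\widetilde{C}^{PR}(\cdot,\cdot)$ given in \eqref{eq:gg-cp-cr}, and, conditionally on $(C_P,C_R)$, the batches arriving in the two subintervals are independent homogeneous Poisson($\lambda$) processes with i.i.d.\ size vectors distributed as $\boldsymbol{K}$ and independent of $\boldsymbol{k}$.

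Next I would write $T_{\boldsymbol{k}}$ as a sum of five contributions that, given $(C_P,C_R)$, are independent, and transform each. These are: (i) the residual cycle $C_R$ itself, waited out in real time, with transform $e^{-\omega C_R}$; (ii) the switch-over times $S_1,\dots,S_{i-1}$ of the following cycle, giving $\prod_{j=1}^{i-1}\widetilde{S}_j(\omega)$; (iii) the services, in that cycle, of every customer that arrived to $Q_1,\dots,Q_i$ during $C_P$ (these sit ahead of $\boldsymbol{k}$ in FCFS order in $Q_i$ and belong to earlier queues otherwise, hence are served before $\boldsymbol{k}$'s last customer), which by the Poisson-marking identity $E\bigl(\prod_{m=1}^{M}Y_m\bigr)=\exp\!\bigl(-\mu(1-E Y)\bigr)$ for $M\sim$ Poisson($\mu$) equals $\exp\!\bigl(-\lambda C_P(1-\widetilde{K}(\boldsymbol{B_{1,i}}))\bigr)$; (iv) the services, in that cycle, of every customer that arrived to $Q_1,\dots,Q_{i-1}$ during $C_R$ (served before the server even reaches $Q_i$), giving $\exp\!\bigl(-\lambda C_R(1-\widetilde{K}(\boldsymbol{B_{1,i-1}}))\bigr)$; and (v) the services of $\boldsymbol{k}$'s own customers, $\prod_{j=1}^{i}\widetilde{B}_j(\omega)^{k_j}$. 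The delicate step --- the one I expect to take the most care --- is precisely this classification of which arrivals lie ahead of $\boldsymbol{k}$: arrivals to $Q_i$ during $C_R$, and all arrivals to $Q_{i+1},\dots,Q_N$ during $C$, are served only after the last customer of $\boldsymbol{k}$ and so contribute nothing.

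Finally I would take the expectation over $(C_P,C_R)$. Pulling the $\omega$-only factors (ii) and (v) out front, what remains is $E\bigl(e^{-\omega_P C_P-\omega_R C_R}\bigr)=\widetilde{C}^{PR}(\omega_P,\omega_R)$ with $\omega_P=\lambda-\lambda\widetilde{K}(\boldsymbol{B_{1,i}})$ and $\omega_R=\omega+\lambda-\lambda\widetilde{K}(\boldsymbol{B_{1,i-1}})$; inserting the explicit form \eqref{eq:gg-cp-cr} and tidying the difference quotient yields the bracketed factor of \eqref{eq:gg-lst-batch-arb}, and restoring $\prod_{j=1}^{i-1}\widetilde{S}_j(\omega)\prod_{j=1}^{i}\widetilde{B}_j(\omega)^{k_j}$ gives the claimed expression. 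As a check, in the boundary case $i=1$ the switch-over product and the $C_R$-arrival factor both disappear and the formula collapses to $\widetilde{B}_1(\omega)^{k_1}$ times $\widetilde{C}^{PR}\bigl(\lambda-\lambda\widetilde{K}(\boldsymbol{B_{1,1}}),\,\omega\bigr)$, as it should.
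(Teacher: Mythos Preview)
Your proposal is correct and follows essentially the same route as the paper: identify $Q_i$ as the queue where the batch finishes, decompose $T_{\boldsymbol{k}}$ into the residual cycle $C_R$, the switch-overs $S_1,\dots,S_{i-1}$, the services of customers arriving during the tagged cycle that end up ahead of the last customer of $\boldsymbol{k}$, and the batch's own services, and then take the expectation over $(C_P,C_R)$ via $\widetilde{C}^{PR}$ from \eqref{eq:gg-cp-cr}. The only cosmetic difference is that the paper groups the ``ahead'' customers by queue (arrivals to $Q_1,\dots,Q_{i-1}$ during the whole cycle, plus arrivals to $Q_i$ during $C_P$), whereas you group them by sub-interval ($C_P$-arrivals to $Q_1,\dots,Q_i$ plus $C_R$-arrivals to $Q_1,\dots,Q_{i-1}$); these are the same set of customers, and your time-based partition is in fact the cleaner one for simultaneous batch arrivals since it feeds directly into the compound-Poisson identity without any risk of treating within-batch contributions to different queues as independent.
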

\begin{proof}
Assume an arbitrary customer batch~$\boldsymbol{k}$ where the number
of customer arrivals per queue is $k_{1}\geq0,\dots,k_{i}>0$ and
$k_{i+1}=0,\dots,k_{N}$. Due to the globally-gated service discipline,
any arriving customer batch will be totally served in the next cycle,
which implies that the customer batch will be fully served after its
last customer in $Q_{i}$ is served. Then, the batch sojourn-time
of customer batch~$\boldsymbol{k}$ is composed of; (i) the residual
cycle time $C^{R}$, (ii) the service times of all customers who arrive
at $Q_{1},\dots,Q_{i-1}$ during the cycle in which the new customer
batch arrives, (iii) the switch-over times of the server between $Q_{1},\dots,Q_{i-1}$,
(iv) the service times of all the customers who arrive at $Q_{i}$
during the past part $C^{P}$ of the cycle in which the customer batch
arrives, and (v) the service times of all the customers in the batch
at $Q_{1},\dots,Q_{i}$. Combining this gives,
\begin{align}
T_{\boldsymbol{k}} & =C^{R}+\sum_{j=1}^{i-1}\sum_{m=1}^{N_{j}\left(C^{P}+C^{R}\right)}B_{j_{m}}+\sum_{j=1}^{i-1}S_{j}+\sum_{m=1}^{N_{i}\left(C^{P}\right)}B_{i_{m}}+\sum_{j=1}^{i}\sum_{m=1}^{k_{j}}B_{j_{m}},\label{eq:gg-thr}
\end{align}
where $N_{j}\left(C^{P}+C^{R}\right)$ denotes number of arrivals
in $Q_{j}$ during the past and residual time of the current cycle
and $N_{i}\left(C^{P}\right)$ denotes the number of arriving customers
in $Q_{i}$ during $C^{P}$. Note that the cycle in which the customer
batch arrives is not equal to $E\left(C\right)$, but is atypical
of size $E\left(C^{P}\right)+E\left(C^{R}\right)$ \cite{Boxma1992}.
By taking the LST of \eqref{eq:gg-thr} we obtain,{\allowdisplaybreaks
\begin{align*}
\widetilde{T}_{\boldsymbol{k}}\left(\omega\right)= & \prod_{j=1}^{i-1}\widetilde{S}_{j}\left(\omega\right)\int_{t_{P=0}}^{\infty}\int_{t_{R}=0}^{\infty}e^{-\omega t_{R}}e^{-\left(\lambda-\lambda K\left(\boldsymbol{B_{1,i-1}}\right)\right)\left(t_{P}+t_{R}\right)}\\
 & \quad\times e^{-\left(\lambda-\lambda\widetilde{K}\left(\boldsymbol{B_{i,i}}\right)\right)t_{P}}\mbox{d}Pr\left(C^{P}<t_{P},\,C^{R}<t_{R}\right)\prod_{j=1}^{i}k_{j}\widetilde{B}_{j}\left(\omega\right)\\
= & \prod_{j=1}^{i-1}\widetilde{S}_{j}\left(\omega\right)E\left(\exp\left(-\left(\lambda-\lambda\widetilde{K}\left(\boldsymbol{B_{1,i}}\right)\right)C^{P}-\left(\lambda-\lambda\widetilde{K}\left(\boldsymbol{B_{1,i-1}}\right)+\omega\right)C^{R}\right)\right)\\
 & \quad\times\prod_{j=1}^{i}k_{j}\widetilde{B}_{j}\left(\omega\right),
\end{align*}
}Using the LST of the joint distribution of $C_{P}$ and $C_{R}$
of \eqref{eq:gg-cp-cr}, we obtain \eqref{eq:gg-lst-batch-arb}.
\end{proof}
We can now find the LST of the batch sojourn-time distribution of
an arbitrary batch. 
\begin{thm}
The LST of the batch sojourn-time distribution of an arbitrary batch
$\widetilde{T}\left(\boldsymbol{.}\right)$, if this queue receives
globally-gated service, is given by:
\begin{align}
\widetilde{T}\left(\omega\right) & =\sum_{\boldsymbol{k}\in\mathcal{K}}\pi\left(\boldsymbol{k}\right)\widetilde{T}_{\boldsymbol{k}}\left(\omega\right),\label{eq:batch-sojourn-gg-ab}
\end{align}
where $\widetilde{T}_{\boldsymbol{k}}\left(\omega\right)$ is given
by \eqref{eq:batch-sojourn-ex-specific}. Alternatively, we can write
\eqref{eq:gg-lst-batch-arb} as,
\begin{multline}
\widetilde{T}\left(\omega\right)=\frac{1}{E\left(C\right)}\sum_{i=1}^{N}\left[\frac{\widetilde{C}\left(\lambda-\lambda\widetilde{K}\left(\boldsymbol{B_{1,i}}\right)\right)-\widetilde{C}\left(\lambda-\lambda\widetilde{K}\left(\boldsymbol{B_{1,i-1}}\right)+\omega\right)}{\omega-\lambda\left(1-\widetilde{K}\left(\boldsymbol{B_{i,i}}\right)\right)}\right]\\
\times\prod_{j=1}^{i-1}\widetilde{S}_{j}\left(\omega\right)\pi\left(\mathcal{K}_{1,i}\right)\widetilde{K}\left(\boldsymbol{B_{1,i}}|\mathcal{K}_{1,i}\right).\label{eq:batch-sojourn-gg-alt}
\end{multline}
\end{thm}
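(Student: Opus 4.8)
The plan is to follow the same two-step pattern used for \ref{thm:batch-sojourn-ex}, simplified by the fact that under globally-gated service the ``reference queue'' is always $Q_1$, so the relevant partition of the support is $\mathcal{K}=\bigcup_{i=1}^{N}\mathcal{K}_{1,i}$. The first identity \eqref{eq:batch-sojourn-gg-ab} is immediate: since the batch-size vector $\boldsymbol{K}$ is independent of the arrival epoch (hence of the server's position and the queue contents at that epoch), conditioning a tagged arriving batch on $\boldsymbol{K}=\boldsymbol{k}$ yields exactly the steady-state quantity $\widetilde{T}_{\boldsymbol{k}}(\omega)$ of the preceding proposition (given by \eqref{eq:gg-lst-batch-arb}), and the law of total probability over the disjoint events $\{\boldsymbol{K}=\boldsymbol{k}\}$, $\boldsymbol{k}\in\mathcal{K}$, gives $\widetilde{T}(\omega)=\sum_{\boldsymbol{k}\in\mathcal{K}}\pi(\boldsymbol{k})\widetilde{T}_{\boldsymbol{k}}(\omega)$.

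For the alternative form \eqref{eq:batch-sojourn-gg-alt}, I would first regroup this sum by the index $i$ of the queue in which the last customer of the batch is served, using that the $\mathcal{K}_{1,i}$ are disjoint with union $\mathcal{K}$, so that $\widetilde{T}(\omega)=\sum_{i=1}^{N}\sum_{\boldsymbol{k}\in\mathcal{K}_{1,i}}\pi(\boldsymbol{k})\widetilde{T}_{\boldsymbol{k}}(\omega)$. The crucial structural point, analogous to the decompositions \eqref{eq:lst-vj-ex}--\eqref{eq:lst-sj-ex} in the exhaustive case, is that for $\boldsymbol{k}\in\mathcal{K}_{1,i}$ the expression \eqref{eq:gg-lst-batch-arb} for $\widetilde{T}_{\boldsymbol{k}}(\omega)$ splits as a factor depending only on $i$ (namely $\tfrac{1}{E\left(C\right)}$, the bracketed ratio of $\widetilde{C}$-terms, and $\prod_{j=1}^{i-1}\widetilde{S}_j(\omega)$) times the purely batch-dependent factor $\prod_{j=1}^{i}\widetilde{B}_j(\omega)^{k_j}$. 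Because every $\boldsymbol{k}\in\mathcal{K}_{1,i}$ satisfies $k_{i+1}=\dots=k_N=0$, this last factor is precisely $\sideset{}{'}\prod_{l=1}^{i}(\boldsymbol{B_{1,i}})_l^{k_l}$, i.e.\ the monomial appearing inside the conditional PGF of \eqref{eq:lst-ex-batch} when $\boldsymbol{z}$ is set to $\boldsymbol{B_{1,i}}$.

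Pulling the $i$-dependent factor out of the inner sum, the remaining sum over $\boldsymbol{k}\in\mathcal{K}_{1,i}$ is
\[
\sum_{\boldsymbol{k}\in\mathcal{K}_{1,i}}\pi(\boldsymbol{k})\prod_{j=1}^{i}\widetilde{B}_j(\omega)^{k_j}=\pi(\mathcal{K}_{1,i})\sum_{\boldsymbol{k}\in\mathcal{K}_{1,i}}\frac{\pi(\boldsymbol{k})}{\pi(\mathcal{K}_{1,i})}\prod_{l=1}^{i}\widetilde{B}_l(\omega)^{k_l}=\pi(\mathcal{K}_{1,i})\,\widetilde{K}\left(\boldsymbol{B_{1,i}}|\mathcal{K}_{1,i}\right),
\]
by the definition \eqref{eq:lst-ex-batch} (instantiated with first index $1$ and second index $i$, where the cyclic product reduces to an ordinary product because $1\le i$). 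Substituting back and summing over $i=1,\dots,N$ produces \eqref{eq:batch-sojourn-gg-alt}.

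I do not expect a genuine obstacle; the work is bookkeeping. The points that need care are: reading the product $\prod_{j=1}^{i}\cdots$ in \eqref{eq:gg-lst-batch-arb} with the correct exponents $k_j$ (so that the batch-dependence is exactly $\prod_j\widetilde{B}_j(\omega)^{k_j}$); checking that the $i$-dependent prefactor carries no hidden dependence on $\boldsymbol{k}$; and handling the degenerate index $i=1$ (empty switch-over product, $\mathcal{K}_{1,1}=\{\boldsymbol{k}\in\mathcal{K}:k_1>0,\ k_2=\dots=k_N=0\}$) together with the removable singularity of the bracketed term at $\omega=\lambda\bigl(1-\widetilde{K}(\boldsymbol{B_{i,i}})\bigr)$, neither of which affects the identity.
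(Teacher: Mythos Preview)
Your proposal is correct and follows essentially the same approach as the paper: partition $\mathcal{K}$ into the sets $\mathcal{K}_{1,i}$, observe that for $\boldsymbol{k}\in\mathcal{K}_{1,i}$ the expression \eqref{eq:gg-lst-batch-arb} factors into an $i$-dependent term times $\prod_{j=1}^{i}\widetilde{B}_j(\omega)^{k_j}$, and then recognize the inner sum as $\pi(\mathcal{K}_{1,i})\widetilde{K}(\boldsymbol{B_{1,i}}\mid\mathcal{K}_{1,i})$ via \eqref{eq:lst-ex-batch}. The paper's proof is terser but structurally identical; your additional remarks on the degenerate index $i=1$ and the removable singularity are sound but not needed for the argument to go through.
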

\begin{proof}
In case of locally-gated an incoming customer batch can only be served
in the next cycle. Therefore, independently on the location of the
server the last customer in the batch to be served is located in the
queue that is the farthest loacted from the reference queue. Thus,
we can write 
\[
\widetilde{T}\left(\omega\right)=\sum_{\boldsymbol{k}\in\mathcal{K}}\sum_{i=1}^{N}1_{\left(\boldsymbol{k}\in\mathcal{K}_{1,i}\right)}\pi\left(\boldsymbol{k}\right)\widetilde{T}_{\boldsymbol{k}}\left(\omega\right).
\]
Finally, by inserting \eqref{eq:gg-lst-batch-arb} and \eqref{eq:lst-ex-batch}
we obtain \eqref{eq:batch-sojourn-gg-alt}. 
\end{proof}

\subsection{Mean batch sojourn-time\label{subsec:Mean-batch-sojourn-gg}}

In this section we determine $E\left(T_{\boldsymbol{k}}\right)$,
the expected batch sojourn-time for a specific customer batch~$\boldsymbol{k}$.
Instead of using MVA, as was the case for exhaustive and locally-gated,
we can directly calculate $E\left(T_{\boldsymbol{k}}\right)$ similar
as for the mean waiting time \cite{Boxma1992}. Taking the expectation
of \eqref{eq:gg-thr} gives the following expression,
\begin{multline}
E\left(T_{\boldsymbol{k}}\right)=E\left(C^{R}\right)+\sum_{j=1}^{i-1}\lambda_{j}E\left(B_{j}\right)\left(E\left(C^{P}\right)+E\left(C^{R}\right)\right)+\sum_{j=1}^{i-1}E\left(S_{j}\right)\\
+\rho_{i}E\left(C^{P}\right)+\sum_{j=1}^{i}k_{j}E\left(B_{j}\right).\label{eq:mean-thr-gg}
\end{multline}
What is left is to derive the mean past and residual time of the cycle
time, $E\left(C_{P}\right)$ and $E\left(C_{R}\right)$. Differentiating
\eqref{eq:LST-Cycle-GG} once and twice yields closed-form expressions
for the first two moment of the cycle time,
\begin{align}
E\left(C\right) & =\frac{E\left(S\right)}{\left(1-\rho\right)},\\
E\left(C^{2}\right) & =\frac{1}{\left(1-\rho^{2}\right)}\left[E\left(S^{2}\right)+2\rho E\left(S\right)E\left(C\right)+\sum_{j=1}^{N}\lambda_{j}E\left(B_{j}^{2}\right)E\left(C\right)\right.\nonumber \\
 & \qquad\left.+\sum_{i=1}^{N}\sum_{j=1}^{N}\lambda E\left(K_{ij}\right)E\left(B_{i}\right)E\left(B_{j}\right)E\left(C\right)\right].
\end{align}
and the expected past and residual cycle time is given by
\begin{multline}
E\left(C^{P}\right)=E\left(C^{R}\right)=\frac{E\left(C^{2}\right)}{2E\left(C\right)}=\frac{1}{\left(1+\rho\right)}\left[\frac{E\left(S^{2}\right)}{2E\left(S\right)}+\frac{\rho E\left(S\right)}{\left(1-\rho\right)}\right.\\
\left.+\frac{\sum_{j=1}^{N}\lambda_{j}E\left(B_{j}^{2}\right)+\sum_{i=1}^{N}\sum_{j=1}^{N}\lambda E\left(K_{ij}\right)E\left(B_{i}\right)E\left(B_{j}\right)}{2\left(1-\rho\right)}\right].\label{eq:residualtime-gg}
\end{multline}

Using \eqref{eq:residualtime-gg}, we can rewrite \eqref{eq:mean-thr-gg}
as follows,
\begin{align}
E\left(T_{\boldsymbol{k}}\right) & =\left[1+2\sum_{j=1}^{i-1}\rho_{j}+\rho_{i}\right]\frac{E\left(C^{2}\right)}{2E\left(C\right)}+\sum_{j=1}^{i-1}E\left(S_{j}\right)+\sum_{j=1}^{i}k_{j}E\left(B_{j}\right).
\end{align}
Finally, we can derive $E\left(T\right)$ the expected batch sojourn-time
of an arbitrary customer batch. Multiplying $E\left(T_{\boldsymbol{k}}\right)$
with all possible realizations of $\boldsymbol{k}$ and using $\mathcal{K}_{1,i}$
gives,{\allowdisplaybreaks
\begin{align*}
E\left(T\right) & =\sum_{i=1}^{N}\sum_{\boldsymbol{k}\in\mathcal{K}_{1,i}}\pi\left(\boldsymbol{k}\right)\left(\left[1+2\sum_{l=1}^{i-1}\rho_{l}+\rho_{i}\right]\frac{E\left(C^{2}\right)}{2E\left(C\right)}+\sum_{j=1}^{i-1}E\left(S_{j}\right)+\sum_{j=1}^{i}k_{j}E\left(B_{j}\right)\right)\\
 & =\sum_{i=1}^{N}\pi\left(\mathcal{K}_{1,i}\right)\left(\left[1+2\sum_{l=1}^{i-1}\rho_{l}+\rho_{i}\right]\frac{E\left(C^{2}\right)}{2E\left(C\right)}+\sum_{j=1}^{i-1}E\left(S_{j}\right)\right)+\sum_{j=1}^{N}E\left(K_{j}\right)E\left(B_{j}\right)\\
 & =\frac{E\left(C^{2}\right)}{2E\left(C\right)}+\sum_{i=1}^{N}\left(\rho_{i}\frac{E\left(C^{2}\right)}{E\left(C\right)}+E\left(S_{i}\right)\right)\cdot\left(1-\sum_{j=1}^{i}\pi\left(\mathcal{K}_{1,j}\right)\right)\\
 & \quad+\rho_{i}\frac{E\left(C^{2}\right)}{2E\left(C\right)}\pi\left(\mathcal{K}_{1,i}\right)+E\left(K_{i}\right)E\left(B_{i}\right).
\end{align*}
}

\section{Numerical results\label{sec:Numerical-results}}

In the following section we investigate the batch sojourn-times for
the three server disciplines. In \ref{subsec:A-symmetrical-polling-results}
we study a symmetrical polling system with two queues and derive a
closed form solution for the expected batch sojourn-times and show
under which parameters settings, which service discipline has the
lowest the expected batch sojourn-time. In \ref{subsec:Arbitrary-systems-results}
we study asymmetrical systems and show that the service discipline
that achieves the lowest expected batch sojourn-time depends on the
system parameters.

\subsection{A symmetrical polling system with two exponential queues\label{subsec:A-symmetrical-polling-results}}

Consider a symmetrical polling system with two queues where all customers
arrive in pairs and each of them joins another queue as shown in \ref{fig:A-symmetrical-polling-fig}.
Assume that the arrival rate is $\lambda$, the expected service time
of a customer in $Q_{1}$ or $Q_{2}$ is $E\left(B_{1}\right)=E\left(B_{2}\right)=b$,
and the expected switch-over time from $Q_{1}$ to $Q_{2}$ and vice
versa is $E\left(S_{1}\right)=E\left(S_{2}\right)=s$. In addition,
we make the assumption that both service times and switch-over times
are exponentially distributed; i.e. $E\left(B_{1}^{R}\right)=E\left(B_{2}^{R}\right)=b$
and $E\left(S_{1}^{R}\right)=E\left(S_{2}^{R}\right)=s$. Since customers
arrive in pairs, $E\left(K_{1}\right)=E\left(K_{2}\right)=1$, and
$E\left(K_{12}\right)=E\left(K_{21}\right)=1$ and $E\left(K_{11}\right)=E\left(K_{22}\right)=0$.
Finally, the overall system load is $\rho=\rho_{1}+\rho_{2}=2b\lambda$.

\begin{figure}[tph]
\noindent \begin{centering}
\begin{tikzpicture}

\tikzset{
  on each segment/.style={
    decorate,
    decoration={
      show path construction,
      moveto code={},
      lineto code={
        \path [#1]
        (\tikzinputsegmentfirst) -- (\tikzinputsegmentlast);
      },
      curveto code={
        \path [#1] (\tikzinputsegmentfirst)
        .. controls
        (\tikzinputsegmentsupporta) and (\tikzinputsegmentsupportb)
        ..
        (\tikzinputsegmentlast);
      },
      closepath code={
        \path [#1]
        (\tikzinputsegmentfirst) -- (\tikzinputsegmentlast);
      },
    },
  },
  mid arrow/.style={postaction={decorate,decoration={
        markings,
        mark=at position .5 with {\arrow[#1]{stealth}}
      }}},
}

    \node(Q-Up)[shape=queue, draw,
    queue head=east, queue size=infinite,
    minimum width=1.25cm,minimum height=0.5cm,anchor=east,xshift=-0.75cm,label={[label distance=0.3cm]north:$Q_1$}]
    at (0,1) {};
    
        \node(Q-Down)[shape=queue, draw,
    queue head=east, queue size=infinite,
     minimum width=1.25cm,minimum height=0.5cm,anchor=east,xshift=-0.75cm,label={[label distance=0.3cm]south:$Q_2$}]
    at (0,0) {};
    
\draw[dashed, postaction={on each segment={mid arrow=black}}]  (0.5,0.5) ellipse (0.75 and 0.75);
\draw (-0.25,0.5) node[draw, fill=white,circle,inner sep = 7] (circ) {};

\node[inner sep =0] (middle) at ($(Q-Up.west)!0.5!(Q-Down.west)-(1,0)$) {};

\draw[<-] (Q-Up.west) -- (middle.center);

\draw[<-] (Q-Down.west) -- (middle.center);

\draw ($(middle.center)-(1,0)$) node[above] {$\lambda$} -- (middle.center);

\end{tikzpicture}
\par\end{centering}
\caption{A symmetrical polling system with two exponential queues.\label{fig:A-symmetrical-polling-fig}}
\end{figure}

First, consider the expected batch sojourn-time $E\left(T^{EX}\right)$
in case of \emph{exhaustive service}. When a new pair of new customers
enter the system, they will encounter with equal probability the system
either in intervisit period $\theta_{1}=\left(S_{2},V_{1}\right)$
or $\theta_{2}=\left(S_{1},V_{2}\right)$. Because of exhaustive service,
the first customer will be served within the current intervisit period,
whereas the second will be served in the following intervisit period.
Because the queues are symmetrical, with probability $\rho$ the pair
of customers should wait for the remaining service of a customer and
the service of new arrivals to the same queue total duration of which
is $b/\left(1-0.5\rho\right)$ and with probability $1-\rho$ they
should wait for the remaining duration of a switch-over period and
the busy period it triggers of duration $s/\left(1-0.5\rho\right)$.
In addition, there are $\bar{L}^{S}=\bar{L}_{1}^{\left(\theta_{1}\right)}=\bar{L}_{2}^{\left(\theta_{2}\right)}$
customers waiting at the queue that are served within the current
intervisit period each of which trigger a busy period of $b/\left(1-0.5\rho\right)$
and, in addition, one of the arriving customers will be taken into
service and trigger a busy period of $b/\left(1-0.5\rho\right)$.
After this, the server moves to the other queue which takes a switch-over
time $s$. Then at the other queue, first the customers that were
already in the queue before the pair of customers arrived at the system
$\bar{L}^{O}=\bar{L}_{1}^{\left(\theta_{2}\right)}=\bar{L}_{1}^{\left(\theta_{2}\right)}$
will be served and afterwards the other arriving customer is served.
Hence, the average batch sojourn-time in case of exhaustive service
is given as follows, 
\begin{align}
E\left(T^{EX}\right) & =\underbrace{\frac{1}{1-0.5\rho}\left[\rho b+\left(1-\rho\right)s+b+\bar{L}^{S}b\right]}_{\mbox{Intervisit 1}}+\underbrace{\vphantom{\frac{1}{1-0.5\rho}\left[\rho b+\left(1-\rho\right)s+b+\bar{L}^{S}b\right]}s+\bar{L}^{O}b+b.}_{\mbox{Intervisit 2}}\label{eq:ex-batch-sym}
\end{align}
Solving the linear equations of \eqref{eq:mva-lin-eq-1} and \eqref{eq:mva-lin-eq-2}
gives,
\begin{alignat*}{1}
\bar{L}^{S}=\frac{\lambda(1.5\rho b-1.5\rho s+2s)}{1-\rho},\qquad & \bar{L}^{O}=\frac{\lambda\left(0.5\rho b-0.5\rho s+b+s\right)}{1-\rho},
\end{alignat*}
and by substituting $\bar{L}^{S}$ and $\bar{L}^{O}$ in \eqref{eq:ex-batch-sym},
we obtain the expected batch sojourn-time in case of exhaustive service,
\begin{equation}
E\left(T^{EX}\right)=\frac{0.25\rho^{2}b-0.25\rho^{2}s-\rho s+2b+2s}{1-\rho}.
\end{equation}

Second, consider the expected batch sojourn-time in case of \emph{locally-gated}
service. In this case, neither of the arriving customers will be served
during the current intervisit period, since both customers are placed
before a gate. The residual duration of the current intervisit period
is $\rho\left(b+s\right)+\left(1-\rho\right)s+\hat{L}^{S}b$, where
$\hat{L}^{S}=\hat{L}_{1}^{\left(\theta_{1}\right)}=\hat{L}_{2}^{\left(\theta_{2}\right)}$
are the average number of customers standing before the gate on the
arriving of the customer pair. Then, in the next intervisit period,
$\tilde{L}^{O}=\tilde{L}_{1}^{\left(\theta_{2}\right)}=\tilde{L}_{2}^{\left(\theta_{1}\right)}$
customers will be served, as well as, all the customers that arrived
to this queue during the previous intervisit period and the one of
the arriving customers. Afterwards, the server returns to the other
queue again and serves first the $\tilde{L}^{S}=\tilde{L}_{1}^{\left(\theta_{1}\right)}=\tilde{L}_{2}^{\left(\theta_{2}\right)}$
customers that were standing before the gate when the pair of customers
entered the system and finally the other arriving customer. Then,
the average batch sojourn-time in case of locally-gated service is
given as follows,
\begin{multline}
E\left(T^{LG}\right)=\underbrace{\left[\rho\left(b+s\right)+\left(1-\rho\right)s^{R}+\hat{L}^{S}b\right]}_{\mbox{Intervisit 1}}+\\
+\underbrace{\vphantom{\left[\rho\left(b+s\right)+\left(1-\rho\right)s+\hat{L}b\right]\left(1+0.5\rho\right)}\left[\rho\left(b+s\right)+\left(1-\rho\right)s+\hat{L}^{S}b\right]0.5\rho+\tilde{L}^{O}b+b+s}_{\mbox{Intervisit 2}}+\underbrace{\vphantom{\left[\rho\left(b+s\right)+\left(1-\rho\right)s+\hat{L}^{S}b\right]\left(1+0.5\rho\right)}\tilde{L}^{S}b+b}_{\mbox{Intervisit 3}},\label{eq:ga-batch-sym}
\end{multline}
Solving the linear equations of \eqref{eq:mva-lin-eq-1-2} and \eqref{eq:mva-lin-eq-2-2}
gives,
\begin{alignat*}{1}
\, & \hat{L}^{S}=\frac{\left(0.5\rho^{3}+0.25\rho^{2}+1.5\rho s\lambda\right)}{\left(1+0.5\rho\right)\left(1-\rho\right)},\qquad\tilde{L}^{O}=\frac{\lambda\left(0.5\rho b-0.5\rho s+b+2s\right)}{1-\rho},\\
 & \tilde{L}^{S}=\frac{\lambda\left(-0.25\rho^{2}b+0.25\rho^{2}s+\rho b-0.5\rho s+s\right)}{\left(1+0.5\rho\right)\left(1-\rho\right)},
\end{alignat*}
and by substituting $\hat{L}^{S}$, $\tilde{L}^{S}$, and $\tilde{L}^{O}$
in \eqref{eq:ga-batch-sym}, we obtain the expected batch sojourn-time
in case of locally-gated service,
\begin{equation}
E\left(T^{LG}\right)=\frac{-0.125\rho^{3}b+0.125\rho^{3}s+0.25\rho^{2}b-0.5\rho^{2}s+0.5\rho b+\rho s+2b+2s}{\left(1+0.5\rho\right)\left(1-\rho\right)}.
\end{equation}
Finally, consider the expected batch sojourn-time in case of \emph{globally-gated}
service. 
\begin{equation}
E\left(T^{GG}\right)=\left(1+1.5\rho\right)\frac{E\left(C^{2}\right)}{2E\left(C\right)}+s+2b,
\end{equation}
Then by \eqref{eq:residualtime-gg}, we obtain the expected batch
sojourn-time in case of globally-gated service,
\begin{equation}
E\left(T^{GG}\right)=\frac{0.5\rho^{2}b-0.5\rho^{2}s+3\rho b+5.5\rho s+4b+5s}{2\left(1+\rho\right)\left(1-\rho\right)}.
\end{equation}
Now, we can compare the expected batch sojourn-times $E\left(T^{EX}\right)$,
$E\left(T^{LG}\right)$, and $E\left(T^{GG}\right)$ and investigate
under which parameters settings which service discipline achieves
the lowest expected batch sojourn-time. \ref{fig:The-expected-batch-two-queue}
shows for two different total arrival rates, $\Lambda$, the areas
where a specific service discipline achieves the lowest expected batch
sojourn-time. From the figures it can be seen that when the switch-over
times are longer compared to the service times, the exhaustive service
discipline achieves the lowest expected batch sojourn-time, since
it is more beneficial to serve all customers at the current queue
first before moving to the other queue. However, if the service times
are longer than the switch-over times it is better to switch to the
other queue more often, because otherwise the server will spend too
much time serving customers in one queue and it will take a long time
before a customer batch is completely served. In this case, both gated
policies perform better than exhaustive service. For both $\Lambda$
the same pattern can be observed. 

\begin{figure}[tph]
\noindent \begin{centering}
\subfloat[$\Lambda=0.4$]{\noindent \begin{centering}
\inputencoding{latin1}\includegraphics{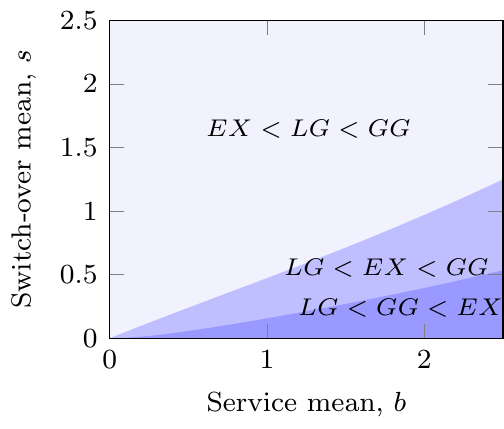}
\par\end{centering}
\selectlanguage{english}%
\inputencoding{latin9}}\subfloat[$\Lambda=0.8$]{\noindent \begin{centering}
\includegraphics{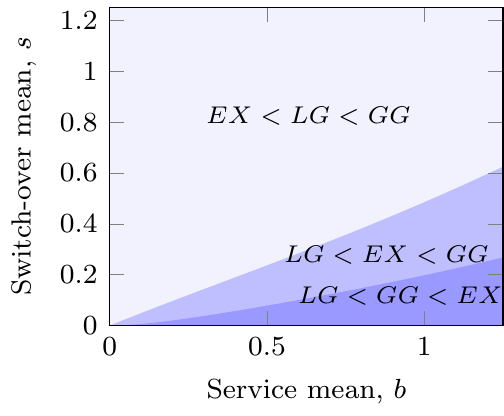}
\par\end{centering}
}
\par\end{centering}
\caption{The expected batch sojourn-time for symmetrical polling system with
two queues.\label{fig:The-expected-batch-two-queue}}
\end{figure}

\subsection{Asymmetrical polling systems with multiple queues\label{subsec:Arbitrary-systems-results}}

\begin{table}[tph]
\caption{Parameters for three polling models.\label{tab:Model-parameters-for}}

\noindent \centering{}%
\begin{tabular}{lccccccccc}
\toprule 
 & \multicolumn{3}{c}{Model a} & \multicolumn{3}{c}{Model b} & \multicolumn{3}{c}{Model c}\tabularnewline
\cmidrule(lr){2-4}\cmidrule(lr){5-7}\cmidrule(lr){8-10}{\footnotesize{}$Q_{i}$} & {\footnotesize{}1} & {\footnotesize{}2} & {\footnotesize{}3} & {\footnotesize{}1} & {\footnotesize{}2} & {\footnotesize{}3} & {\footnotesize{}1} & {\footnotesize{}2} & {\footnotesize{}3}\tabularnewline
\midrule
\midrule 
{\footnotesize{}$E\left(B_{i}\right)$} & {\footnotesize{}1.00} & {\footnotesize{}1.00} & {\footnotesize{}1.00} & {\footnotesize{}1.00} & {\footnotesize{}1.00} & {\footnotesize{}1.00} & {\footnotesize{}0.10} & {\footnotesize{}0.40} & {\footnotesize{}0.90}\tabularnewline
{\footnotesize{}$E\left(B_{i}^{\left(2\right)}\right)$} & {\footnotesize{}2.00} & {\footnotesize{}2.00} & {\footnotesize{}2.00} & {\footnotesize{}2.00} & {\footnotesize{}2.00} & {\footnotesize{}2.00} & {\footnotesize{}1.00} & {\footnotesize{}1.00} & {\footnotesize{}1.00}\tabularnewline
{\footnotesize{}$E\left(S_{i}\right)$} & {\footnotesize{}0.10} & {\footnotesize{}0.10} & {\footnotesize{}0.10} & {\footnotesize{}1.00} & {\footnotesize{}1.00} & {\footnotesize{}1.00} & {\footnotesize{}1.00} & {\footnotesize{}1.00} & {\footnotesize{}1.00}\tabularnewline
{\footnotesize{}$E\left(S_{i}^{\left(2\right)}\right)$} & {\footnotesize{}0.02} & {\footnotesize{}0.02} & {\footnotesize{}0.02} & {\footnotesize{}2.00} & {\footnotesize{}2.00} & {\footnotesize{}2.00} & {\footnotesize{}1.00} & {\footnotesize{}1.00} & {\footnotesize{}1.00}\tabularnewline
\midrule 
{\footnotesize{}$\boldsymbol{k}\in\mathcal{K}$} & \multicolumn{3}{c}{{\footnotesize{}$\pi\left(1,1,0\right)=1/4$}} & \multicolumn{3}{c}{{\footnotesize{}$\pi\left(1,0,0\right)=1/3$}} & \multicolumn{3}{c}{{\footnotesize{}$\pi\left(1,1,0\right)=4/5$}}\tabularnewline
 & \multicolumn{3}{c}{{\footnotesize{}$\pi\left(3,0,1\right)=3/4$}} & \multicolumn{3}{c}{{\footnotesize{}$\pi\left(0,1,0\right)=1/3$}} & \multicolumn{3}{c}{{\footnotesize{}$\pi\left(1,0,3\right)=1/5$}}\tabularnewline
 &  &  &  & \multicolumn{3}{c}{{\footnotesize{}$\pi\left(0,0,1\right)=1/3$}} &  &  & \tabularnewline
\bottomrule
\end{tabular}
\end{table}

In the previous section, we have shown that depending on the system
parameters exhaustive service or locally-gated service minimizes the
expected batch sojourn-time. However, it can be shown that any of
the three service disciplines studied in this paper can minimize the
expected batch sojourn-time. In \ref{tab:Model-parameters-for} the
parameters of three systems with $N=3$ are given. \emph{Model~a}
has short switch-over times, \emph{Model~b} is a system with individual
arriving customers and equal switch-over times and service times,
and in \emph{Model~c} the last queue is the slowest and receives
most of the work. Using the results of \ref{subsec:Mean-value-analysis-ex},
\ref{subsec:Mean-value-analysis-gated}, and \ref{subsec:Mean-batch-sojourn-gg}
the expected batch sojourn-times for the three different models can
be calculated. The batch sojourn-times are shown in \ref{fig:The-expected-batch-results}
for $0\leq\rho<1$. The results of \emph{Model~a} in \ref{fig:Locally-gated-best}
show that locally-gated achieves the lowest expected batch sojourn-times,
which is similar as in \ref{subsec:A-symmetrical-polling-results}
when the switch-over times were short. From the results of \emph{Model~b}
shown in \ref{fig:Exhaustive-best}, it can be seen that exhaustive
service has the lowest expected batch sojourn-times. Here it is beneficial
to serve a customer arriving to the same queue that is currently being
served, since otherwise this customer has to wait a full cycle which
increases the mean batch sojourn-time. Finally, \emph{Model~c} in
\ref{fig:Globally-gated-best} shows that globally-gated service achieves
the lowest expected batch sojourn-times, since for this policy the
server will switch more often between the queues and finish service
for all customers in a batch during one cycle, compared to the other
disciplines.

\begin{figure}[p]
\noindent \begin{centering}
\subfloat[Locally-gated minimizes the expected batch sojourn-time\label{fig:Locally-gated-best}]{\noindent \begin{centering}
\includegraphics{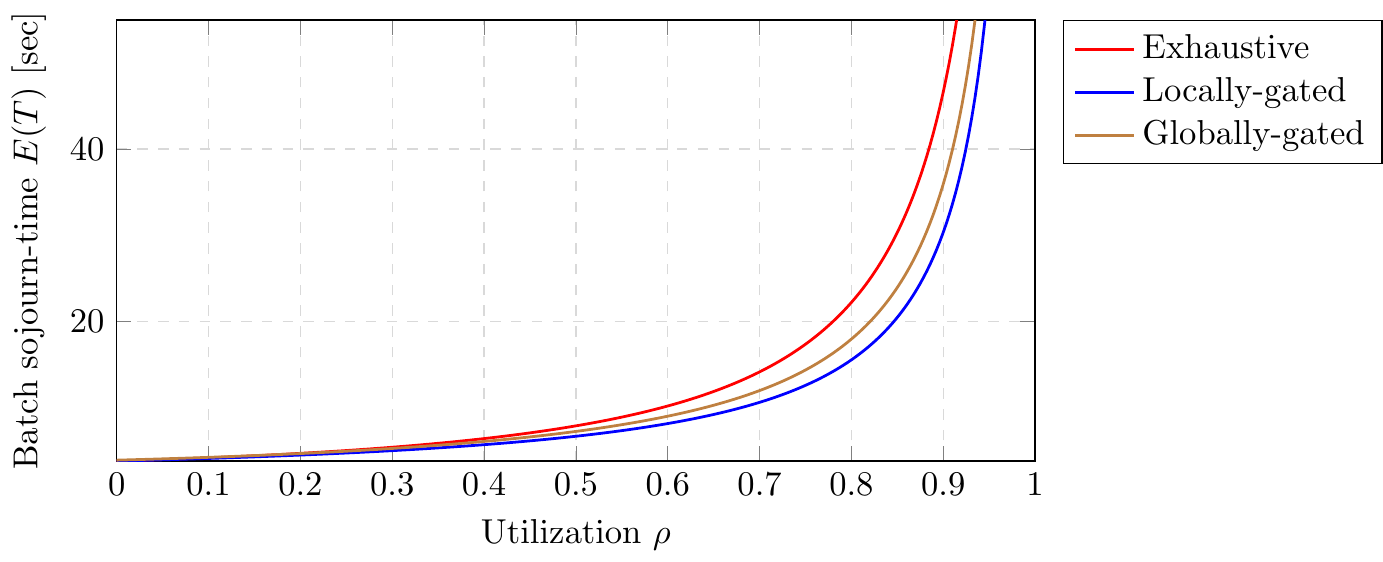}
\par\end{centering}
}
\par\end{centering}
\noindent \begin{centering}
\subfloat[Exhaustive minimizes the expected batch sojourn-time\label{fig:Exhaustive-best}]{\noindent \begin{centering}
\includegraphics{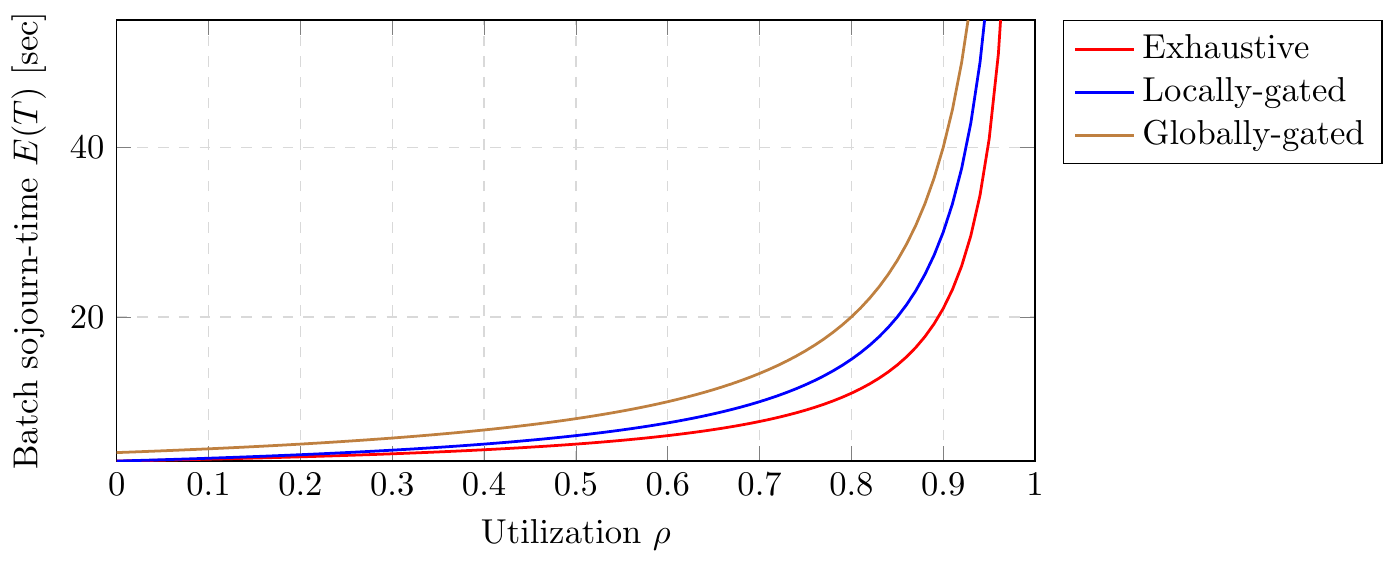}
\par\end{centering}
}
\par\end{centering}
\noindent \begin{centering}
\subfloat[Globally-gated minimizes the expected batch sojourn-time\label{fig:Globally-gated-best}]{\noindent \begin{centering}
\includegraphics{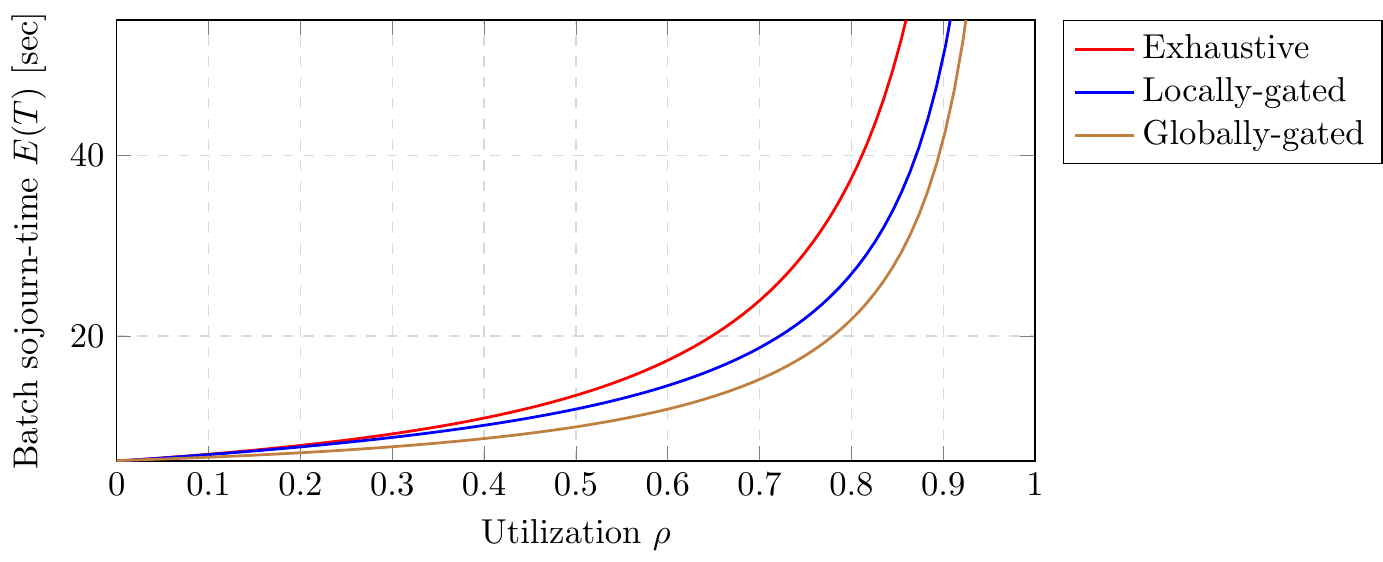}
\par\end{centering}
}
\par\end{centering}
\caption{The expected batch sojourn-time for various utilizations for three
different systems.\label{fig:The-expected-batch-results}}
\end{figure}

\section{Conclusion and further research\label{sec:Conclusion-and-futher-research}}

In this paper we analyzed the batch sojourn-time in a cyclic polling
system with simultaneous batch arrivals and obtained exact expressions
for the Laplace-Stieltjes transform of the steady-state batch sojourn-time
distribution for the locally-gated, globally-gated, and exhaustive
service disciplines. Also, we provided a more efficient way to determine
the mean batch sojourn-time using the Mean Value Analysis. We compared
the batch sojourn-times for the different service disciplines in several
numerical examples and showed that the best performing service discipline,
minimizing the batch sojourn-time, depends on system characteristics. 

A further research topic would be to determine for each of the three
policies, under what conditions for the system parameters, its mean
batch sojourn-time is smaller than that of the other two and whether
alternative service disciplines can achieve even lower batch sojourn-times.
Another interesting further research topic would be to study how the
customers of an arriving customer batch should be allocated over the
various queues in order to minimize the batch sojourn-times.

\ifdefined\FULLTHESIS\else

\bibliographystyle{apalike2}
\bibliography{thesis}

\begin{thebibliography}{}

\bibitem[Baba, 1986]{Baba1986}
Baba, Y. (1986).
\newblock On the {$M^x/G/1$} queue with vacation time.
\newblock {\em Operations Research Letters}, 5(2), 93--98.

\bibitem[Boon et~al., 2011]{Boon2011}
Boon, M. A.~A., Van~der Mei, R.~D., \& Winands, E. M.~M. (2011).
\newblock Applications of polling systems.
\newblock {\em Surveys in Operations Research and Management Science}, 16(2),
  67--82.

\bibitem[Boon et~al., 2012]{Boon2012}
Boon, M. A.~A., Van~der Mei, R.~D., \& Winands, E. M.~M. (2012).
\newblock Waiting times in queueing networks with a single shared server.
\newblock {\em Queueing Systems}, 74(4), 403--429.

\bibitem[Boxma et~al., 1990]{Boxma1990}
Boxma, O.~J., Groenendijk, W.~P., \& Weststrate, J.~A. (1990).
\newblock A pseudoconservation law for service systems with a polling table.
\newblock {\em IEEE Transactions on Communications}, 38(10), 1865--1870.

\bibitem[Boxma et~al., 2011]{Boxma2011}
Boxma, O.~J., Kella, O., \& Kosinski, K.~M. (2011).
\newblock Queue lengths and workloads in polling systems.
\newblock {\em Operations Research Letters}, 39(6), 401--405.

\bibitem[Boxma et~al., 1992]{Boxma1992}
Boxma, O.~J., Levy, H., \& Yechiali, U. (1992).
\newblock Cyclic reservation schemes for efficient operation of multiple-queue
  single-server systems.
\newblock {\em Annals of Operations Research}, 35(3), 187--208.

\bibitem[Chiarawongse \& Srinivasan, 1991]{Chiarawongse1991}
Chiarawongse, J. \& Srinivasan, M.~M. (1991).
\newblock On pseudo-conservation laws for the cyclic server system with
  compound {Poisson} arrivals.
\newblock {\em Operations Research Letters}, 10(8), 453--459.

\bibitem[Choudhury, 2002]{Choudhury2002}
Choudhury, G. (2002).
\newblock Analysis of the {$M^X/G/1$} queueing system with vacation times.
\newblock {\em Sankhya: The Indian Journal of Statistics}, 64(1), 37--49.

\bibitem[Choudhury \& Whitt, 1996]{Choudhury1996}
Choudhury, G.~L. \& Whitt, W. (1996).
\newblock Computing distributions and moments in polling models by numerical
  transform inversion.
\newblock {\em Performance Evaluation}, 25(4), 267--292.

\bibitem[Cohen, 1982]{Cohen1982}
Cohen, J.~W. (1982).
\newblock {\em The single server queue}.
\newblock North-Holland, Amsterdam.

\bibitem[Eisenberg, 1972]{Eisenberg1972}
Eisenberg, M. (1972).
\newblock Queues with periodic service and changeover time.
\newblock {\em Operations Research}, 20(2), 440--451.

\bibitem[Federgruen \& Katalan, 1999]{Federgruen1999}
Federgruen, A. \& Katalan, Z. (1999).
\newblock The impact of adding a make-to-order item to a make-to-stock
  production system.
\newblock {\em Management Science}, 45(7), 980--994.

\bibitem[Gong \& De~Koster, 2011]{Gong2011}
Gong, Y. \& De~Koster, M. B.~M. (2011).
\newblock A review on stochastic models and analysis of warehouse operations.
\newblock {\em Logistics Research}, 3(4), 191--205.

\bibitem[Keilson \& Servi, 1988]{Keilson1988}
Keilson, J. \& Servi, L.~D. (1988).
\newblock A distributional form of {Little}'s {Law}.
\newblock {\em Operations Research Letters}, 7(5), 223--227.

\bibitem[Kleinrock \& Levy, 1988]{Kleinrock1988}
Kleinrock, L. \& Levy, H. (1988).
\newblock The {Analysis} of {Random} {Polling} {Systems}.
\newblock {\em Operations Research}, 36(5), 716--732.

\bibitem[Konheim et~al., 1994]{Konheim1994}
Konheim, A.~G., Levy, H., \& Srinivasan, M.~M. (1994).
\newblock Descendant set: an efficient approach for the analysis of polling
  systems.
\newblock {\em IEEE Transactions on Communications}, 42(234), 1245--1253.

\bibitem[Levy \& Sidi, 1990]{Levy1990}
Levy, H. \& Sidi, M. (1990).
\newblock Polling systems: Applications, modeling, and optimization.
\newblock {\em IEEE Transactions on Communications}, 38(10), 1750--1760.

\bibitem[Levy \& Sidi, 1991]{Levy1991}
Levy, H. \& Sidi, M. (1991).
\newblock Polling systems with simultaneous arrivals.
\newblock {\em IEEE Transactions on Communications}, 39(6), 823--827.

\bibitem[Little, 1961]{Little1961}
Little, J. D.~C. (1961).
\newblock {A proof of the queuing formula $L=\lambda W$}.
\newblock {\em Operations Research}, 9(3), 383--387.

\bibitem[Resing, 1993]{Resing1993}
Resing, J. A.~C. (1993).
\newblock Polling systems and multitype branching processes.
\newblock {\em Queueing Systems}, 13(4), 409--426.

\bibitem[Roodbergen \& De~Koster, 2001]{Roodbergen2001}
Roodbergen, K.~J. \& De~Koster, M. B.~M. (2001).
\newblock Routing methods for warehouses with multiple cross aisles.
\newblock {\em International Journal of Production Research}, 39(9),
  1865--1883.

\bibitem[Shiozawa et~al., 1990]{Shiozawa1990}
Shiozawa, Y., Takine, T., Takahashi, Y., \& Hasegawa, T. (1990).
\newblock Analysis of a polling system with correlated input.
\newblock {\em Computer Networks and ISDN Systems}, 20(1-5), 297--308.

\bibitem[Takagi, 1986]{Takagi1986}
Takagi, H. (1986).
\newblock {\em Analysis of polling systems}.
\newblock MIT press.

\bibitem[Takagi, 2000]{Takagi2000}
Takagi, H. (2000).
\newblock Analysis and application of polling models.
\newblock In {\em Performance Evaluation: Origins and Directions}  (pp.\
  423--442). Springer.

\bibitem[Van~der Mei, 2001]{Mei2001}
Van~der Mei, R.~D. (2001).
\newblock Polling systems with simultaneous batch arrivals.
\newblock {\em Stochastic Models}, 17(3), 271--292.

\bibitem[Van~der Mei, 2002]{Mei2002}
Van~der Mei, R.~D. (2002).
\newblock Waiting-time distributions in polling systems with simultaneous batch
  arrivals.
\newblock {\em Annals of Operations Research}, 113(1-4), 155--173.

\bibitem[Van~der Mei et~al., 2001]{Mei2001a}
Van~der Mei, R.~D., Hariharan, R., \& Reeser, P.~K. (2001).
\newblock Web server performance modeling.
\newblock {\em Telecommunication Systems}, 16(3-4), 361--378.

\bibitem[Winands et~al., 2006]{Winands2006a}
Winands, E. M.~M., Adan, I. J. B.~F., \& Van~Houtum, G.~J. (2006).
\newblock Mean value analysis for polling systems.
\newblock {\em Queueing Systems}, 54(1), 35--44.

\bibitem[Winands et~al., 2011]{Winands2011}
Winands, E. M.~M., Adan, I. J. B.~F., \& Van~Houtum, G.~J. (2011).
\newblock The stochastic economic lot scheduling problem: {A} survey.
\newblock {\em European Journal of Operational Research}, 210(1), 1--9.

\bibitem[Wolff, 1982]{Wolff1982}
Wolff, R.~W. (1982).
\newblock Poisson {Arrivals} {See} {Time} {Averages}.
\newblock {\em Operations Research}, 30(2), 223--231.

\end{thebibliography}

\fi

\end{document}